\numberwithin{equation}{section}
\newtheorem{theorem}{Theorem}[section]
\newtheorem{lemma}[theorem]{Lemma}
\newtheorem{proposition}[theorem]{Proposition}
\newtheorem{remark}[theorem]{Remark}
\newcommand\R{\mathbb R}
\newcommand\mbb\mathbb
\newcommand\mbf\mathbf
\newcommand\mcal\mathcal
\newcommand\mfrak\mathfrak
\newcommand\mrm\mathrm
\newcommand\msf\mathsf
\renewcommand\a\alpha
\renewcommand\b\beta
\newcommand\g\gamma
\newcommand\G\Gamma
\renewcommand\d\delta
\newcommand\D\Delta
\newcommand\e\varepsilon
\newcommand\z\zeta
\renewcommand\t\theta
\newcommand\Th\Theta
\newcommand\la\lambda
\newcommand\La\Lambda
\newcommand\s\sigma
\newcommand\si\varsigma
\newcommand\Si\Sigma
\newcommand\ups\upsilon
\newcommand\U\Upsilon
\newcommand\ph\varphi
\renewcommand\o\omega
\renewcommand\O\Omega
\newcommand\wt\widetilde
\newcommand\wh\widehat
\newcommand\ol\overline
\newcommand\ul\underline
\newcommand\mr\mathring
\newcommand\ub\underbrace
\newcommand\pa\partial
\newcommand\n\nabla
\newcommand\fa\forall
\newcommand\ex\exists
\newcommand\es\emptyset
\newcommand\wk\rightharpoonup
\newcommand\inc\hookrightarrow
\newcommand\linf\varliminf
\newcommand\lsup\varlimsup
\newcommand\os\overset
\newcommand\us\underset
\newcommand\sr\stackrel
\newcommand\Ot\Leftarrow
\newcommand\To\Rightarrow
\newcommand\map\mapsto
\newcommand\ot\leftarrow
\newcommand\lot\longleftarrow
\newcommand\lto\longrightarrow
\newcommand\tot\leftrightarrow
\newcommand\ltot\longleftrightarrow
\newcommand\sm\backslash
\renewcommand\Cup\bigcup
\renewcommand\Cap\bigcap
\newcommand\sub\subset
\newcommand\Sub\Subset
\newcommand\sne\subsetneq
\newcommand\bus\supset
\newcommand\Bus\Supset
\newcommand\eq\equiv
\newcommand\ox\otimes
\newcommand\Ox\bigotimes
\newcommand\pl\oplus
\newcommand\Pl\bigoplus
\newcommand\x\times
\renewcommand\c\circ
\newcommand\q\quad
\renewcommand\l\left
\renewcommand\r\right
\newcommand\fr\frac
\def\@makefnmark{}
\begin{document}
\title[the Hardy-type mean field equation]{Quantitative properties of the Hardy-type mean field equation}
\author{Lu Chen}
\address{Key Laboratory of Algebraic Lie Theory and Analysis of Ministry of Education, School of Mathematics and Statistics, Beijing Institute of Technology, Beijing 100081, P. R. China;
MSU-BIT-SMBU Joint Research Center of Applied Mathematics, Shenzhen MSU-BIT
University, Shenzhen 518172, China}
\email{chenlu5818804@163.com}
\author{Bohan Wang}
\address{Key Laboratory of Algebraic Lie Theory and Analysis of Ministry of Education, School of Mathematics and Statistics, Beijing Institute of Technology, Beijing
100081, PR China}
\email{wangbohanbit2022@163.com}
\author{Chunhua Wang$^{\dagger}$}
\address{School of Mathematics and Statistics and Key Laboratory of Nonlinear Analysis and Applications, Ministry of Education; Hubei key Laboratory of Mathematical Sciences, Wuhan, 430079, P. R. China}
\email{chunhuawang@ccnu.edu.cn}
\thanks{$\dagger$ Corresponding author.\\
This paper was partly supported by the National Key Research and Development Program (No.
2022YFA1006900) and National Natural Science Foundation of China (No. 12271027, No. 12471106).}
\maketitle

\begin{abstract}
 In this paper, we consider the following Hardy-type mean field equation

\[
\left\{ {\begin{array}{*{20}{c}}
   { - \Delta  u-\frac{1}{(1-|x|^2)^2}  u  = \lambda e^u}, & {\rm in} \ \ B_1,  \\
   {\ \ \ \ u = 0,} &\  {\rm on}\  \partial B_1,
\end{array}} \right.
\]
\[\]
where $\lambda>0$ is small and $B_1$ is the standard unit disc of $\R^2$. Applying the moving plane method of hyperbolic space and the accurate expansion of heat kernel on hyperbolic space, we establish the radial symmetry and Brezis-Merle lemma for solutions of Hardy-type mean field equation. Meanwhile, we also derive the quantitative results for solutions of Hardy-type mean field equation, which improves significantly the compactness results for the classical mean field equation
obtained by Brezis-Merle and  Li-Shafrir. Furthermore, applying the local Pohozaev identity from scaling,
blow-up analysis and a contradiction argument,
we prove that the solutions are unique when $\lambda$ is sufficiently close to 0.

\medskip

\maketitle {\small {\bf Keywords:}~blow-up analysis; Hardy-Moser-Onofri inequality; mean field equation; quantitative analysis; uniqueness.
 \\

{\bf 2010 MSC.} 34E05; 34D05; 35B06; 35B09.  }

\end{abstract}

\section{Introduction\label{introduction}}
Let $B_1$ denote the standard unit disc in $\R^2$. The classical Moser-Trudinger inequality \cite{Moser1,Trudinger} states that for $\alpha\leq 4\pi$,

\begin{equation}
\label{MT}
\mathop {\sup }\limits_{\ \|\nabla u\|_2\leq1}
\int_{B_1} e^{\alpha u^2} dx<+\infty, \quad \forall\; u \in  W_0^{1,2}(B_1),
\end{equation}
\[\]
which serves as a fundamental tool in the field of nonlinear analysis and geometric analysis. As $\alpha=4\pi$, the inequality \eqref{MT} is sharp and $4\pi$ could not be replaced by any larger constant. The slightly weaker form of \eqref{MT}

\[ \mathop {\inf }\limits_{u \in W_0^{1,2}(B_1)}\frac {1}{16\pi} \int_{B_1} |\nabla u|^2 dx - \log \int_{B_1} e^u dx> -\infty, \quad \forall\; u\in W^{1,2}_0(B_1)\]
\[\]
has been widely applied  in addressing issues related to prescribed Gaussian curvature and mean field equations, which is called the Moser-Onofri inequality (see \cite{Bec,Ono}).

\medskip
Another well-known inequality in nonlinear functional analysis is the Hardy inequality which can be stated as

\begin{equation}
\label{Hardy}
    \int_{B_1} |\nabla u|^2 dx - \int_{B_1} \frac {u^2}{(1-|x|^2)^2} dx \ge 0,\quad \forall\; u\in W^{1,2}_0(B_1).
\end{equation}
\[\]
The inequality \eqref{Hardy} is sharp (see \cite{Marcus}), namely, for any $\lambda > 1$,

$$\inf_{u\in W_0^{1,2}(B_1)}\left[\int_{B_1} |\nabla u|^2 dx - \lambda\int_{B_1} \frac {u^2}{(1-|x|^2)^2} dx\right] = -\infty.$$
\[\]
Later, Brezis and Marcus \cite{Brezis-Mar} proved that there exists a positive constant $C > 0$  such that

\begin{equation*}
    \label{Brz}\int_{B_1}|\nabla u|^2dx-\int_{B_1}\frac{u^2}{(1-|x|^2)^2}dx\geq C\int_{B_1} u^2dx,
\quad\forall u\in W_0^{1,2}(B_1).
\end{equation*}
\[\]
If we denote by $\mathcal{H}$ the completion of $C_c^{\infty}(B_1)$ under the norm $\Big(\int_{B_1}\big(|\nabla u|^2-\frac{u^2}{(1-|x|^2)^2}\big)dx\Big)^{\frac{1}{2}}$, then the above inequality still holds for $u\in \mathcal{H}$.
In \cite{Wang-Ye}, using blow-up analysis, Wang and Ye
studied the sharp Hardy-Trudinger-Moser inequality. Indeed, they showed
\begin{equation}
\label{HMT}
    \sup_{\int_{B_1}(|\nabla u|^2-\frac{u^2}{(1-|x|^2)^2})dx\leq 1}\int_{B_1}\exp(4\pi u^2)dx<+\infty.
\end{equation}
\[\]
It is worth noting that the space  $\mathcal{H}$  is locally embedded into $W^{1,2}(B_1)$. By Hardy-Trudinger-Moser inequality, one can deduce the following Hardy-Moser-Onofri inequality:

\begin{equation}\label{HMO}
\inf_{u\in \mathcal{H}}\Bigg[\frac {1}{16\pi} \int_{B_1} |\n u|^2 dx - \frac {1}{16\pi} \int_{B_1} \frac {u^2}{(1-|x|^2)^2}dx -  \log \int_{B_1}e^u dx\Bigg] > -\infty.
\end{equation}
\[\]
The critical points of this inequality \eqref{HMO} satisfy the following Hardy-type mean field equation:

 \begin{equation}\label{mfeqH}
\left\{ {\begin{array}{*{20}{c}}
   { \ \ - \Delta  u-\frac{1}{(1-|x|^2)^2}  u  = \frac{\rho e^u}{\int_{B_1}  e^udx} ,} & {\text{in} \ B_1, } \\
   u\in \mathcal{H},\\
   {u = 0,} & {\ \ \text{on} \ \partial B_1}
\end{array}} \right.
\end{equation}
\[\]
with $\rho=8\pi$.
\medskip

The classical mean field equation

 \begin{equation}\label{mfeq}
\left\{ {\begin{array}{*{20}{c}}
   { \ \ - \Delta  u  = \frac{\rho e^{u}}{\int_{\Omega}  e^{u} dx} }, & {\text{in} \ \Omega\subseteq \mathbb{R}^2, } \\
   {u = 0,} & {\  \text{on} \ \partial \Omega}
\end{array}} \right.
\end{equation}
\[\]
 appears in multiple contexts, including conformal geometry (see \cite{Aubin}), statistical mechanics (see \cite{Caglioti1,Caglioti2}), and various other fields of applied mathematics (see  \cite{Bebernes, Chandrasekhar, Gelfand,Murrey}).  Across all these areas, there is a significant interest in constructing solutions that exhibit blow-up behavior and concentrate  at specific points, the locations of which provide valuable insights into the geometric and physical characteristics of the problem being studied.
\vskip0.1cm

To investigate blow-up solutions for the mean field equation \eqref{mfeq}, a crucial approach is to establish its quantitative properties. This work can be traced back to
 the foundational work of Brezis and Merle in \cite{Brezis}. In the past few decades, numerous researchers, including Nagasaki-Suzuki \cite{Nagasaki}, Li-Shafrir \cite{Li2} and Ma-Wei \cite{Wei3}, have conducted extensive studies on the quantitative properties of mean field equation \eqref{mfeq}. We also notice that the quantitative properties for high-order mean field equation and $n$-Laplace mean field equation have also been established in (\cite{CLW,Martinazzi,Wei1}).
\vskip0.1cm

In summary, to the best of our knowledge, the quantitative analysis for Hardy-type mean field equation \eqref{mfeqH} remains unexplored. The absence of Brezis-Merle lemma for
equation \eqref{mfeqH} and the presence of critical Hardy singularity bring substantial challenge in studying the related issues. In this paper, we tackle these challenges and obtain the following result:
\vskip0.1cm

\begin{theorem}
\label{Th1.1th}
Assume that $u_\lambda\in \mathcal{H}$ satisfies equation

\begin{equation}\label{quan}
\left\{ {\begin{array}{*{20}{c}}
  \medskip
   { - \Delta  u-\frac{1}{(1-|x|^2)^2}  u  = \lambda e^u,\ \ \  {\rm in} \ B_1,}  \\
     \medskip
   {0<C_1\leq\int_{B_1}  {\lambda e^u dx}\leq C_2,}\\
   {\ \ \ \ \ \ \ \   \ \ \ \ \ \ \ \ \ \ u = 0,\ \ \ \ \ \ \ \ \ {\rm on} \ \partial B_1.}
\end{array}} \right.
\end{equation}
\[\]
Then $u_\lambda$ is positive and radially decreasing. When $\lambda\rightarrow 0$, the solution $u_\lambda$ must blow up at the origin. Furthermore, we have

$$
\int_{B_1}  {\lambda e^{u_\lambda} dx}\rightarrow 8\pi
$$
and
\begin{equation}\label{quantitative formula}
u_\lambda(x)\rightarrow  8\pi G( x,0)\ \ \  {\rm in} \ \  C_{loc}^1(B_1 \backslash \{0\}),
\end{equation}
\[\]
where $G(x,y)$ denotes the Green's function of $-\Delta-\frac{1}{(1-|x|^2)^2}$ and satisfies the equation

\begin{equation}\label{Green fcn}
\left\{ {\begin{array}{*{20}{c}}
\medskip
   {\big( - \Delta -\frac{1}{(1-|x|^2)^2} \big )G(x,y)
     =\delta_x(y)}, & { \ {\rm in}\ B_1,}  \\
   {\ \ G(x,y)=  0 }, & { \ \ \  {\rm on}\ \partial B_1.}  \\
\end{array}} \right.
\end{equation}
\end{theorem}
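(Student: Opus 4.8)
The plan is to transplant the problem to the hyperbolic disc and exploit the fact that the operator $-\Delta-\frac{1}{(1-|x|^2)^2}$ is, up to conformal factor, the Laplace–Beltrami operator on $\mathbb{H}^2$. Writing $g = \frac{4}{(1-|x|^2)^2}\,|dx|^2$ for the Poincaré metric and $u = w$ on $(B_1,g)$, the equation becomes a Liouville-type problem $-\Delta_g w - w = \lambda e^{w}$ with a conformal weight, to which the moving-plane method on hyperbolic space (using the isometries of $\mathbb{H}^2$ in place of Euclidean translations/reflections) applies. First I would run the moving-plane argument to conclude that any solution in $\mathcal{H}$ is radially symmetric about the origin and strictly decreasing; positivity then follows from the maximum principle together with the fact that $\lambda e^{u}>0$ forces $u$ to be a positive supersolution of $-\Delta - \frac{1}{(1-|x|^2)^2}$ (whose first eigenvalue on $\mathcal{H}$ is positive by the Brezis–Marcus improvement), so $u>0$ in $B_1$.

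Next I would establish the Brezis–Merle alternative for \eqref{mfeqH}: for a sequence $\lambda_n\to 0$ with $C_1\le\int_{B_1}\lambda_n e^{u_n}\le C_2$, either $u_n$ is bounded in $L^\infty_{loc}$, or $u_n\to-\infty$ uniformly on compacta, or there is a finite blow-up set $S$ and $\lambda_n e^{u_n}\rightharpoonup \sum_{p\in S}\beta_p\delta_p$ with each $\beta_p\ge 4\pi$. The proof mirrors Brezis–Merle: split $u_n = u_n^{(1)}+u_n^{(2)}$ where $u_n^{(1)}$ solves the equation with the same right-hand side and zero boundary data and $u_n^{(2)}$ is harmonic for the Hardy operator; control $u_n^{(1)}$ by the sharp $e^{(4\pi-\delta)|u|}\in L^1$ estimate, which here is exactly the Hardy–Trudinger–Moser inequality \eqref{HMT} (its endpoint form \eqref{HMO} replaces the classical Onofri bound), and use the accurate expansion of the heat kernel on $\mathbb{H}^2$ to handle the conformal weight near $\partial B_1$ — this is where the hyperbolic geometry does real work, because the Green's function $G(x,y)$ of \eqref{Green fcn} has a logarithmic singularity with the correct coefficient inherited from the hyperbolic heat kernel. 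Since $u_n$ is radial and, by the first step, cannot stay bounded (a bounded radial sequence would give $\lambda_n e^{u_n}\to 0$ in $L^\infty$, contradicting $\int \lambda_n e^{u_n}\ge C_1$) nor tend to $-\infty$ on compacta, the blow-up set is nonempty; radial monotonicity forces $S=\{0\}$, so $\lambda_n e^{u_n}\rightharpoonup \beta\,\delta_0$ with $\beta\ge 4\pi$.

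To pin down $\beta = 8\pi$ I would perform the standard rescaling at the origin: set $v_n(y) = u_n(\mu_n y) - u_n(0)$ with $\mu_n^2\lambda_n e^{u_n(0)} = 1$; because the conformal weight $\frac{4}{(1-|\mu_n y|^2)^2}\to 4$ locally and the Hardy term $\frac{1}{(1-|\mu_n y|^2)^2}u_n$ is lower order after rescaling, $v_n$ converges in $C^2_{loc}$ to a solution of $-\Delta v = e^{v}$ on $\mathbb{R}^2$ with $\int_{\mathbb{R}^2} e^{v}<\infty$, i.e.\ the standard bubble with $\int_{\mathbb{R}^2}e^{v} = 8\pi$ (Chen–Li classification). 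A Pohozaev identity on $B_\delta$ — multiply the equation by $x\cdot\nabla u_n$ and integrate — then shows there is no energy loss in the neck region: the boundary terms on $\partial B_\delta$ are controlled via the convergence \eqref{quantitative formula} to $\beta\,G(x,0)$, whose radial derivative at $r=\delta$ is $-\frac{\beta}{2\pi\delta}+O(1)$, and matching the quadratic-in-$\beta$ boundary contribution against the $8\pi$ from the bubble forces $\beta = 8\pi$. Finally, $u_n\to 8\pi G(\cdot,0)$ in $C^1_{loc}(B_1\setminus\{0\})$ follows from $\lambda_n e^{u_n}\rightharpoonup 8\pi\delta_0$ and elliptic regularity for the Hardy operator away from the origin.

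The main obstacle is the Brezis–Merle step in the presence of the critical Hardy singularity: the usual potential-theoretic estimates break down near $\partial B_1$ because the zeroth-order coefficient $\frac{1}{(1-|x|^2)^2}$ is not bounded, so one genuinely needs the sharp heat-kernel expansion on $\mathbb{H}^2$ to recover the $4\pi$-threshold and the correct Green's function asymptotics; getting the constants exactly right there (so that the endpoint Onofri-type inequality \eqref{HMO} is what governs the dichotomy) is the delicate point. The Pohozaev matching is then routine once the local bubble profile and the $C^1_{loc}$ convergence to $\beta\,G$ are in hand.
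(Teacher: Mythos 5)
Your proposal follows essentially the same route as the paper: the moving-plane method on hyperbolic space for radial symmetry and positivity, a Brezis--Merle-type exponential-integrability estimate built on the hyperbolic Green's function bound $G(x,y)\le -\frac{1}{2\pi}\log|x-y|+C$ to force concentration only at the origin, convergence $u_\lambda\to\beta G(\cdot,0)$ in $C^1_{loc}(B_1\setminus\{0\})$, and a local Pohozaev identity whose boundary terms yield $4\beta=\frac{\beta^2}{2\pi}$, i.e.\ $\beta=8\pi$. The only deviation is your extra rescaling to the standard bubble, which the paper reserves for the uniqueness analysis and which is not needed here since the Pohozaev matching alone determines $\beta$.
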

\medskip

\medskip
It is very effective to apply the moving plane method or the method of moving spheres to prove the symmetry and uniqueness of solutions(see \cite{CL,CLO,LZ}).
To prove Theorem \ref{Th1.1th}, applying the moving plane method of hyperbolic space, we first show the radial symmetry of solutions to
the Hardy-type mean field equation \eqref{quan}.
Then,
based on the accurate expansion of heat kernel on hyperbolic space,
we obtain a similar Brezis-Merle Lemma and prove that when $\lambda\rightarrow 0$, the solution $u_\lambda$ must blow up at the origin,

$$
\int_{B_{1}}  {\lambda e^{u_\lambda} dx}\rightarrow 8\pi.
$$
\[\]
Finally, we apply the local Pohozaev identity to prove

\begin{equation*} 
u_\lambda(x)\rightarrow  8\pi G( x,0)\ \ \  {\rm in} \ \  C_{loc}^1(B_1 \backslash \{0\}).
\end{equation*}
\medskip

Moreover, the uniqueness of solutions for elliptic equation is also a very
important topic.  Next we investigate the uniqueness of
solutions as $\lambda\rightarrow 0$ to the Hardy-type mean field equation \eqref{quan}
and obtain the following result.

\begin{theorem}\label{unique}
There exists $\lambda_0>0$ such that solutions of the Hardy-type mean field equation \eqref{quan} are unique for any $0<\lambda<\lambda_0.$
\end{theorem}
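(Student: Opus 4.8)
The plan is to argue by contradiction in the standard spirit of uniqueness proofs for mean field equations (cf. the Ma--Wei and Li--Shafrir circle of ideas), now adapted to the Hardy-type operator $L:=-\Delta-\tfrac{1}{(1-|x|^2)^2}$. Suppose there exist two sequences $\lambda_n\to 0$ and distinct solutions $u_{1,n}\neq u_{2,n}$ of \eqref{quan} for $\lambda=\lambda_n$. By Theorem \ref{Th1.1th} both are positive, radially decreasing, blow up at the origin, satisfy $\int_{B_1}\lambda_n e^{u_{i,n}}\,dx\to 8\pi$, and converge to $8\pi G(\cdot,0)$ in $C^1_{loc}(B_1\setminus\{0\})$. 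Set $\mu_{i,n}:=\max u_{i,n}=u_{i,n}(0)$; the first step is to get sharp asymptotics for $\mu_{i,n}$ and for the bubble profile. Using the hyperbolic-space heat-kernel expansion and the radial ODE satisfied by $u_{i,n}$, I would show that after the rescaling $v_{i,n}(y):=u_{i,n}(\delta_{i,n}y)-\mu_{i,n}$, with $\delta_{i,n}^2\lambda_n e^{\mu_{i,n}}$ normalized to a constant (so $\delta_{i,n}\to0$ and $\mu_{i,n}\to+\infty$), one has $v_{i,n}\to V_0(y):=-2\log(1+\tfrac18|y|^2)$, the standard entire solution of $-\Delta V=e^V$ on $\mathbb{R}^2$ with $\int e^{V_0}=8\pi$; the Hardy term $\tfrac{1}{(1-|\delta_{i,n}y|^2)^2}u_{i,n}\sim u_{i,n}$ contributes only lower-order corrections near the origin. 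A Pohozaev identity of the type used in Theorem \ref{Th1.1th} then pins down $\delta_{i,n}$ and gives an expansion $\mu_{i,n}=-\log\lambda_n+c_0+o(1)$ with $c_0$ determined by the regular part of the Green's function $G$; crucially this expansion is \emph{the same} for $i=1$ and $i=2$, so $\mu_{1,n}-\mu_{2,n}\to0$ and more precisely $\delta_{1,n}/\delta_{2,n}\to1$.

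The core of the argument is then a linearization. Define $\xi_n:=\dfrac{u_{1,n}-u_{2,n}}{\|u_{1,n}-u_{2,n}\|_{L^\infty(B_1)}}$, so $\|\xi_n\|_\infty=1$, $\xi_n=0$ on $\partial B_1$, and $\xi_n$ solves the linear equation
\begin{equation}\label{lin}
-\Delta \xi_n-\frac{1}{(1-|x|^2)^2}\xi_n=\lambda_n\, c_n(x)\,\xi_n,\qquad c_n(x):=\frac{e^{u_{1,n}}-e^{u_{2,n}}}{u_{1,n}-u_{2,n}}=\int_0^1 e^{t u_{1,n}+(1-t)u_{2,n}}\,dt.
\end{equation}
The plan is to analyze \eqref{lin} on two scales. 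On the rescaled (bubbling) scale, $\tilde\xi_n(y):=\xi_n(\delta_{1,n}y)$ satisfies an equation converging to the linearized equation $-\Delta\phi=e^{V_0}\phi$ on $\mathbb{R}^2$; its bounded solutions form the three-dimensional space spanned by the generators of dilations and translations, namely $\phi_0=\tfrac{1-\frac18|y|^2}{1+\frac18|y|^2}$ and $\phi_j=\tfrac{y_j}{1+\frac18|y|^2}$, $j=1,2$. Radial symmetry of $u_{1,n},u_{2,n}$ (hence of $\xi_n$) kills the translation modes $\phi_1,\phi_2$, so $\tilde\xi_n\to \beta\phi_0$ for some constant $\beta$. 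Outside the bubble, $\xi_n$ converges in $C^1_{loc}(B_1\setminus\{0\})$ to a solution of the homogeneous equation $L\psi=0$ in $B_1\setminus\{0\}$ with $\psi=0$ on $\partial B_1$; matching with the inner limit and controlling the singularity at $0$ (the Hardy operator only permits a $\log$-type singularity, which is excluded by the $L^\infty$ bound and the matching with $\beta\phi_0$) forces $\psi\equiv0$, hence $\beta=0$ as well once one invokes the normalization $\|\xi_n\|_\infty=1$ — so all the mass of $\xi_n$ escapes from every fixed scale, a contradiction unless we extract more information.

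To convert these vanishing statements into a genuine contradiction, I would use a Pohozaev-type identity for the pair $(u_{1,n},u_{2,n})$, differentiated in the spirit of the ``local Pohozaev identity from scaling'' already employed for Theorem \ref{Th1.1th}. Testing \eqref{lin} against suitable vector fields (the radial field $x\cdot\nabla$ adapted to the hyperbolic metric) and against $G(\cdot,0)$, one obtains an identity whose leading term is a nonzero multiple of $\beta$ (from the $\phi_0$-mode, i.e.\ the derivative of the energy with respect to the concentration parameter $\mu$) plus lower-order terms that are $o(1)$ by the two-scale analysis and the already-established coincidence of the asymptotic expansions of $\mu_{1,n}$ and $\mu_{2,n}$. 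Since the left-hand side must be consistent with $\|\xi_n\|_\infty=1$ being attained somewhere, tracking where the supremum of $|\xi_n|$ is attained — it must be at the origin by the radial-decreasing structure and the form of $\phi_0$ — yields $|\xi_n(0)|=|\beta|+o(1)=1$, contradicting $\beta=0$. This closes the argument for all $\lambda$ below some $\lambda_0$.

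I expect the main obstacle to be the two-scale matching in the presence of the critical Hardy singularity: one must rule out that $\xi_n$ develops a nontrivial profile at an intermediate scale between $\delta_n$ and $1$, and one must show the Hardy potential $\tfrac{1}{(1-|x|^2)^2}$ — which is bounded near the origin but degenerate near $\partial B_1$ — does not generate an extra bounded solution of the linearized problem (a Hardy-type ``zero mode'') that would invalidate the nondegeneracy. Establishing this rigidity, presumably via the precise heat-kernel/Green's-function expansion on hyperbolic space quoted earlier together with a sharp weighted $L^\infty$ estimate on $\xi_n$, is the crux; once nondegeneracy of the limiting linearized operator (modulo the dilation mode, which the Pohozaev identity handles) is in hand, the contradiction is routine.
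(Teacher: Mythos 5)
Your overall strategy coincides with the paper's: argue by contradiction, show $c^{(1)}_\lambda-c^{(2)}_\lambda=o_\lambda(1)$ so that the two concentration scales satisfy $r^{(1)}_\lambda/r^{(2)}_\lambda\to1$, normalize the difference of the two solutions, blow up at the bubble scale to land in the kernel of the linearized Liouville operator, discard the translation modes by radial symmetry, and kill the remaining dilation mode $\frac{1-|x|^2}{1+|x|^2}$ via a scaled local Pohozaev identity. (A minor slip: the correct expansion of the maximum is $c_\lambda=A-2\log\lambda+o_\lambda(1)$, not $-\log\lambda+c_0+o(1)$; this does not affect the only fact you use, namely that the expansion is the same for both solutions.)

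There is, however, a genuine gap in how you close the contradiction, and it is exactly the point you yourself call the crux. You normalize $\xi_n$ by $\|u_{1,n}-u_{2,n}\|_{L^\infty(B_1)}$, so after proving $\beta=0$ you only know $\xi_n\to0$ uniformly on balls $\{|x|\le R\,r_\lambda\}$; this does not contradict $\|\xi_n\|_{L^\infty(B_1)}=1$, since the supremum could be attained at an intermediate scale $r_\lambda\ll|x_n|\ll1$ or in the outer region. Your claim that the supremum of $|\xi_n|$ must be attained at the origin ``by the radial-decreasing structure'' is unjustified: $\xi_n$ is a normalized difference of two radially decreasing functions, need not be monotone, and indeed $\beta=0$ says precisely that $\xi_n(0)\to0$; likewise the sketched outer matching (``$\psi\equiv0$, hence $\beta=0$'') provides no uniform control at intermediate scales. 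The paper avoids this issue by different bookkeeping: radial symmetry together with Cauchy-data uniqueness for the radial ODE reduces the theorem to showing $u^{(1)}_\lambda\equiv u^{(2)}_\lambda$ only on the shrinking ball $B_{\delta r_\lambda}$, so $\xi_\lambda$ is normalized in $L^\infty(B_{\delta r_\lambda})$; after rescaling by $r^{(1)}_\lambda$ this region becomes (essentially) the fixed ball $B_\delta$, on which the blow-up limit $\widetilde{\alpha}_{0}\frac{1-|x|^2}{1+|x|^2}$ with $\widetilde{\alpha}_{0}=0$ immediately contradicts the normalization, and the Pohozaev identity is applied on $B_{\delta r^{(1)}_\lambda}$, where the Hardy potential is harmless after scaling. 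Unless you adopt this reduction, or else supply a genuine multi-scale estimate for $\xi_n$ outside the bubble (for instance a decay estimate in the spirit of Lemma \ref{lem2.4}, or a Green's-representation bound showing the $L^\infty(B_1)$ norm of $\xi_n$ is captured at scale $r_\lambda$), the inner nondegeneracy plus the Pohozaev identity alone do not rule out the mass of $\xi_n$ escaping to intermediate scales, and the proof is incomplete.
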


\begin{remark}
In \cite{CCL}, Chang-Chen-Lin proved the uniqueness of solutions to the classical mean filed equation
\eqref{mfeq} by applying the isoperimetric inequality and the non-degeneracy of solutions.
Here we will use a local Pohozaev identity from scaling, blow-up analysis combining a contradiction argument which are different of theirs.
\end{remark}

There are many uniqueness results of solutions
for elliptic equations with nonlinearities being power functions,
such as \cite{DLY,GILY,GPY,G-1993}. However, the uniqueness results for solutions of Hardy-type mean field equation \eqref{quan} is still unknown. To prove Theorem \ref{unique}, similar to \cite{LPP-2022}, we mainly apply a contradiction argument combining some local Pohozaev identities.
Specifically, assume that there exist two solutions $u^{1}_{\lambda}(y)$
 and $u^{2}_{\lambda}(y)$ of equation \eqref{quan}.
 Noting that we can show the
  solutions of equation \eqref{quan} are radial (see Proposition \ref{pro1}), by  Cauchy-initial uniqueness for ODE, we only need to prove $u^{1}_{\lambda}(0)=u^{2}_{\lambda}(0).$
 To this end, we wish to prove $u^{1}_{\lambda}(y)=u^{2}_{\lambda}(y)$
 in a ball with small radial and the center of the ball being at 0.
 Observing that up to hyperbolic translation, the solutions concentrate at 0, we only need to apply
  a Pohozaev identity of the solutions $u_{\lambda}$ from scaling, which makes our proof much simpler.
\vskip 0.1cm

We would like to point out that since there is a critical Hardy-singularity term in equation \eqref{quan}, different from \cite{LPP-2022}, we apply the local Pohozaev
identity on a small ball $B_{\delta r_{\lambda}}$ not in $B_{1}(0).$
In order to estimate $\frac{r^{(1)}_{\lambda}}{r^{(2)}_{\lambda}},$
we need an accurate relation between $\lambda$ and $c_{\lambda}.$
And we obtain a decay result for $u_{\lambda}(r_{\lambda}x)-c_{\lambda}$
in $B_{\frac{1}{r_{\lambda}}} \backslash B_{2R_\epsilon},$ where $R_{\epsilon}>0$ is small. This decay result is of great interest independently.
\medskip

The paper is organized as follows: In section \ref{s2}, we introduce some properties and Hardy-Littlewood-Sobolev inequality on hyperbolic spaces.
In section \ref{s5}, we separately accomplish the proof of Theorem \ref{Th1.1th}. And in section \ref{s6}, we consider  the asymptotic behavior of the solution $u_\lambda$ around the origin.  In  section \ref{s7}, we finish the proof of  Theorem \ref{unique}. To facilitate a more streamlined proof in section \ref{s7}, we present some necessary results in the appendix.

\section{Auxiliary Results}\label{s2}

\subsection{Hyperbolic space and Mobius transformation}\label{sect:2.1}
Denote $\mathbb{B}^2$ by the Poincare disk, which is a unit ball $B_1$ equipped with the usual Poincare metric $g=\left(\frac{2}{1-|x|^2}\right)^2g_e$, where $g_e$ represents
the standard Euclidean metric. The hyperbolic volume element can be written as $dV_{\mathbb{B}^2}=\left(\frac{2}{1-|x|^2}\right)^2dx$ and the geodesic distance from the origin to $x\in \mathbb{B}^2$ is given by $\rho(x)=\log \frac{1+|x|}{1-|x|}$. The associated Laplace-Beltrami operator $\Delta_{\mathbb{B}^2}$ and the gradient $\nabla_{\mathbb{B}^2}$ are given respectively by

$$
\Delta_{\mathbb{B}^2}=\frac{1-|x|^2}{4}
\Big((1-|x|^2)\Delta_{\mathbb{R}^2}\Big),\ \ \nabla_{\mathbb{B}^2}=\Big(\frac{1-|x|^2}{2}\Big)^2\nabla_{\mathbb{R}^2}.
$$
\medskip

For each $a\in \mathbb{B}^2$, we define the Mobius transformation $T_a$ by (see \cite{Ahlfors})

\begin{equation}\label{trans-1}
T_a(x)=\frac{|x-a|^2a-(1-|a|^2)(x-a)}{1-2x\cdot a+|x|^2a^2},
\end{equation}
\[\]
where $x\cdot a$ denotes the scalar product in $\mathbb{R}^2$. It is known that the volume element $dV_{\mathbb{B}^2}$ on $\mathbb{B}^2$ is invariant with the respect to the Mobius transformation, which deduces that for any $\varphi\in L^{1}(\mathbb{B}^2)$,
there holds

$$\int_{\mathbb{B}^2}|\varphi\circ T_a|dV_{\mathbb{B}^2}=\int_{\mathbb{B}^2}|\varphi|dV_{\mathbb{B}^2}.$$
\[\]
Furthermore, the commutativity of Mobius transformation  $T_a$ (hyperbolic translation) with the operator $-\Delta_{\mathbb{B}^2}$ still holds. That is to say that for any $\phi\in C^{\infty}_c(\mathbb{B}^2)$, there holds

$$\int_{\mathbb{B}^2}-\Delta_{\mathbb{B}^2}(\phi\circ T_a)(\phi\circ T_a)dV_{\mathbb{B}^2}=\int_{\mathbb{B}^2}(-\Delta_{\mathbb{B}^2}\phi)\circ T_a\cdot (\phi\circ T_a)dV_{\mathbb{B}^2}=\int_{\mathbb{B}^2}-\Delta_{\mathbb{B}^2}\phi\cdot \phi dV_{\mathbb{B}^2}.$$
\[\]
Using the Mobius transformation, we can define the geodesic distance from $x$ to $y$ in $\mathbb{B}^2$ as follows

$$
\rho(x,y)=\rho(T_{x}(y))=\rho(T_{y}(x))=\log \frac{1+T_{y}(x)}{1-T_{y}(x)}.
$$
\[\]
Also using the Mobius transformation again, we can define the convolution of measurable functions $f$ and $g$ on $\mathbb{B}^2$ by (see \cite{liu})

$$(f\ast g)(x)=\int_{\mathbb{B}^2}f(y)g(T_x(y))dV_{\mathbb{B}^2}(y),$$
\[\]
where $dV_{\mathbb{B}^2}(y)=\left(\frac{2}{1-|y|^2}\right)^{2}dy$.
\medskip

\medskip

\subsection{Hardy-Littlewood-Sobolev inequality on Poincare disk $\mathbb{B}^2$}

It is well known that for any $u\in W^{1,2}(\mathbb{B}^2)$ and $q\geq 2$, there holds

$$
\int_{\mathbb{B}^2}\big(|\nabla_{\mathbb{B}^2}u|^2-\frac{1}{4} |u|^2\big)dV_{\mathbb{B}^2}\geq C_{q}\Big(\int_{\mathbb{B}^2}|u|^qdV_{\mathbb{B}^2}\Big)^{\frac{2}{q}}.
$$
\[\]
By density argument, this inequality also holds for any function in the completion of $C_c^{\infty}(\mathbb{B}^2)$ under the norm $\int_{\mathbb{B}^2}\big(|\nabla_{\mathbb{B}^2} u|^2- \frac{1}{4}u^2\big)dV_{\mathbb{B}^2}$. If we define $f=(-\Delta_{\mathbb{B}^2}-\frac{1}{4})^{\frac{1}{2}}u$, then by duality,
the above inequality is equivalent to

$$
\Big(\int_{\mathbb{B}^2}|f|^{q'}dV_{\mathbb{B}^2}\Big)^{\frac{2}{q'}}\geq C_{q}\int_{\mathbb{B}^2}
\Big|(-\Delta_{\mathbb{B}^2}-\frac{1}{4})^{-\frac{1}{2}}f\Big|^2dV_{\mathbb{B}^2},
$$
\[\]
which can be written as

$$
\int_{\mathbb{B}^2}\int_{\mathbb{B}^2}
f(x)G(x,y)f(y)dV_{\mathbb{B}^2}(y)dV_{\mathbb{B}^2}(x)\leq C_{ q}^{-1}\Big(\int_{\mathbb{B}^2}|f|^{q'}dV_{\mathbb{B}^2}\Big)^{\frac{2}{q'}},
$$
\[\]
where $G(x,y)$ is the Green's function of the operator $-\Delta_{\mathbb{B}^2}-\frac{1}{4}$ on hyperbolic space $\mathbb{B}^2$, and $q'=\frac{q}{q-1}$.
Furthermore, we can also derive that for any $f$ and $g\in L^{q'}(\mathbb{B}^2)$, there holds

$$
\int_{\mathbb{B}^2}\int_{\mathbb{B}^2}f(x)G(x,y)g(y)dV_{\mathbb{B}^2}(y)dV_{\mathbb{B}^2}(x)\leq C_{ q}^{-1}\|f\|_{L^{q'}(\mathbb{B}^2)}\|g\|_{L^{q'}(\mathbb{B}^2)}.
$$
\[\]
If we define $I(f)=\int_{\mathbb{B}^2}G(x,y)f(y)dV_{\mathbb{B}^2}(y)$, then it follows that

$$
\|I(f)\|_{L^q(\mathbb{B}^2)}\leq C_{q}^{-1} \|f\|_{L^{q'}(\mathbb{B}^2)}.
$$
\medskip

\medskip

\subsection{Total geodesic line of two dimensional Hyperboloid model }

Let $\mathbb{R}^{2,1}=(\mathbb{R}^{2+1},g)$ be the Minkowski space, where the metric satisfies

$$
ds^2=dx_1^2+dx_2^2-dx_{3}^2.
$$
 \[\]
 The hyperboloid model of hyperbolic space $\mathbb{H}^2$ is the submanifold

 $$\{x\in \mathbb{R}^{2,1}:x_1^2+x_2^2-x_{3}^2=-1, x_{3}>0\}.$$
  \[\]
  The total geodesic line $U_{x_2}$ along $x_2$ direction through origin of $\mathbb{R}^2$ can be defined as
$U_{x_2}=\{x\in \mathbb{H}^2:\ x_2=0\}$. The general total geodesic line along $x_2$ direction can be generated through hyperbolic rotation. More precisely, define $A^{t}_{x_2}=Id_{\mathbb{R}^{1}}\bigotimes\widetilde{A}_{x_2}^{t} $, where $\widetilde{A}_{x_2}^{t}$ is the hyperbolic rotation on $\mathbb{R}^{1,1}$ and is given by

\begin{align*}
\widetilde{A}_{x_2}^{t}=\bigg{(}
\begin{array}{c}
\cosh t, \ \sinh t\\
\sinh t,\  \cosh t
\end{array} \bigg{)}.
\end{align*}
\[\]
Then $\{U_{x_2}^{t}:=A_{x_2}^{t}(U_{x_2})\}_{t\in \mathbb{R}}$ is a family of total geodesic line along the $x_2$ direction and they are pairwise disjoint and constitutes the whole hyperbolic space $\mathbb{H}^2$. The total geodesic line $U_{\nu}$ along $\nu\in \mathbb{S}^{1}$ direction through origin of $\mathbb{R}^2$ can be defined as $U_{\nu}=\{x\in \mathbb{H}^2:(x_1,x_2)\cdot\nu=0\}$. For any $x'=(x_1,x_2)\in \mathbb{R}^2$, we can write $x'=(x',\nu)\nu+y'$ and $y'$ is orthogonal to $\nu$. $U_{\nu}^{t}=Id_{\nu^{\perp}}\bigotimes\widetilde{A}_{x_2}^{t}$. Simple calculation gives that

$$U_{\nu}^{t}=\left(\sinh t x_{3}\nu+y', \cosh t x_{3}\right), \ \ (x', x_{3})\in U_{\nu}.$$
\[\]
It is easy to check that $U_{\nu}^{t}$ are pairwise disjoint and also constitute the whole hyperbolic space $\mathbb{H}^2$.

\medskip

\medskip

\subsection{Total geodesic line of Poincare disk model and reflection:}
Let $\phi$ be the isometric map from two dimensional Hyperboloid model to Poincare disk. The $\phi$ can be obtained by stereographic projection from the hyperboloid to the plane $\{x_{3}=0\}$ taking the vertex from which to project to be $(0,0,-1)$.
In fact, we can write the map $\phi$ as
$\phi: x\in \mathbb{H}^2\mapsto \frac{x'}{x_{3}+1}\in \mathbb{\mathbb{B}}^2$. Under the map $\phi$, by careful calculation, one can check that
the total geodesic line $U_{\nu}$ of hyperboloid $\mathbb{H}^2$ becomes $\{x \in \mathbb{B}^2:\ x\cdot \nu=0\}$, which is also a geodesic line of Poincare disk. Furthermore $\{\phi(U_{\nu}^t)\}_{t\in \mathbb{R}}$ are pairwise disjoint and constitutes Poincare disk $\mathbb{B}^2$.
 The Mobius transformation(see \eqref{trans-1}) includes rotation and is isometric from $\mathbb{B}^2$ to $\mathbb{B}^2$. It is well known that for any fixed $\nu \in \mathbb{S}^{1}$, $\{T_a\left(\phi(U_{\nu})\right)\}_{a\in \mathbb{B}^2}$ generates all the geodesic line of Poincare disk.
\vskip0.1cm

For the total geodesic line $\phi(U_{x_2})=\{x\in \mathbb{B}^2: x_2=0\}$ along the $x_2$ direction in $\mathbb{B}^2$, it is easy to define the reflection $I_{x_2}$ about $\phi(U_{x_2})$ through $I_{x_2}(x_1, x_2)=(x_1, -x_2)$ for $x=(x_1, x_2)\in \mathbb{B}^2$. Obviously, through geodesic line equation, one can easily verify that $I_{x_2}$ maps the geodesic line to geodesic line. One can similarly define the reflection $I_{\nu}$ about $\phi(U_{\nu})$ through $I_{\nu}(x_1, x_2)=-(x\cdot\nu)\nu+y$, where $x=(x_1, x_2)=(x\cdot\nu)\nu+y$ and $y\in \nu^{\perp}$. Obviously, $I_{\nu}(\phi(U_{\nu}))=\phi(U_{\nu})$.
\vskip0.1cm

Now, we start to define the reflection about the general total geodesic line in $\mathbb{B}^2$. According to the definition of total geodesic line, any total geodesic line can be written as $T_a(\phi(U_{x_2}))$ for some $a>0$. The reflection $I_{x_2}^{a}$ about $T_a(\phi(U_{x_2}))$ can be defined through

$$
I_{x_2}^{a}(x)=T_{a} \circ I_{x_2}\circ T_{a},
$$
\[\]
since $T_a^{-1}=T_a$. Simple calculation gives that $I_{x_2}^{a}\left(T_a(\phi(U_{x_2}))\right)=T_a\circ I_{x_2}\circ T_a\left(T_a(\phi(U_{x_2}))\right)=T_a\circ I_{x_2}(\phi(U_{x_2}))=T_a(\phi(U_{x_2}))$.

\vskip0.1cm

\medskip

\section{the Proof of Theorem \ref{Th1.1th}}\label{s5}
The proof of Theorem \ref{Th1.1th} is divided into two subsections. In the first subsection, we show the radial symmetry of solutions to
the Hardy-type mean field equation \eqref{quan}. In the second subsection, we prove that when $\lambda\rightarrow 0$, the solution $u_\lambda$ must blow up at the origin, and there hold that

$$
\int_{B_{1}}  {\lambda e^{u_\lambda} dx}\rightarrow 8\pi \ \ \ {\rm and}
 \ \ \
u_\lambda(x)\rightarrow  8\pi G( x,0)\ \ \  {\rm in} \ \  C_{loc}^1(B_1 \backslash \{0\}).
$$
\[\]

\subsection{The
symmetry of solutions to the Hardy-type mean field equation}\label{s4}
 We  first consider the Green's function $G(x,y)$ of the operator $-\Delta-\frac{1}{(1-|x|^2)^2}$ with the singularity at $x\in B_1$.
Obviously $G(x,y)$ satisfies equation

\begin{equation}\label{3-1}
\begin{cases}
-\Delta G(x,y)-\frac{G(x,y)}{(1-|x|^2)^2}=\delta_{x}(y)\ {\rm in} \   B_1, \\
\ \ \ \ \ \ \ \ \ \ \ \ \ \   \ \ \ \ G_x(y)=0\ \  \ \ \  \ {\rm on}  \  \partial B_1.
\end{cases}
\end{equation}
\[\]
We can also rewrite equation \eqref{3-1} in Poincare disk $\mathbb{B}^2$. It is not difficult to check that
$G(x,y)$ satisfies the following equation

\begin{equation}
\begin{cases}
-\Delta_{\mathbb{B}^2} G(x,y)-\frac{1}{4}G(x,y)=\delta_{x}(y) \  {\rm in} \  \mathbb{B}^2, \\
\lim\limits_{\rho(y)\rightarrow +\infty}G(x,y)=0.
\end{cases}
\end{equation}
\[\]
From \cite{LuYangQ3}, we know that

$$(-\Delta_{\mathbb{B}^2}-\frac{1}{4})^{-1}=\frac{1}{4}\big(\cosh(\frac{\rho}{2})\big)^{-1}F(\frac{1}{2}; \frac{1}{2}; 1; \cosh^{-2}(\frac{\rho}{2})).$$
\[\]
According to the integral formula of hypergeometric function, we obtain that

\begin{equation}
\begin{split}
(-\Delta_{\mathbb{B}^2}-\frac{1}{4})^{-1}&=\frac{1}{4}\big(\cosh(\frac{\rho}{2})\big)^{-1}F(\frac{1}{2}; \frac{1}{2}; 1; \cosh^{-2}(\frac{\rho}{2}))\\
&=\frac{1}{4\pi}\cosh^{-1}(\frac{\rho}{2})\int_{0}^{1}t^{-\frac{1}{2}}(1-t)^{-\frac{1}{2}}(1-t\cosh^{-2}(\frac{\rho}{2}))^{-\frac{1}{2}}dt\\
&=\frac{1}{4\pi}\int_{0}^{1}t^{-\frac{1}{2}}(1-t)^{-\frac{1}{2}}(\cosh^2(\frac{\rho}{2})-t)^{-\frac{1}{2}}dt\\
&=\frac{1}{4\pi}\int_{0}^{1}t^{-\frac{1}{2}}(1-t)^{-\frac{1}{2}}(\sinh^2(\frac{\rho}{2})+t)^{-\frac{1}{2}}dt.\\
\end{split}
\end{equation}
\[\]
Hence the Green's function $G(x,y)$ with singularity at $x$ can be written as

$$G (x,y)=\frac{1}{4\pi}\int_{0}^{1}t^{-\frac{1}{2}}(1-t)^{-\frac{1}{2}}(\sinh^2(\frac{\rho(x,y)}{2})+t)^{-\frac{1}{2}}dt,$$
\[\]
where $\rho(x,y)=\rho(T_{x}y,0)$.
\medskip

 \medskip

\begin{lemma}\label{lem8.1}(Lemma 4.1, \cite{LuYangQ3})
For any complex number $s=\sigma+i\tau$ with $\sigma>0$ and $u>0$, define

 $$
\varphi(s,u)=\frac{1}{4\pi}\int_{0}^{1}t^{s-1}(1-t)^{s-1}(t+u)^{-s}dt.
$$
\[\]
Then $\varphi(s,u)$ is analytic in $s$, $C^{\infty}$ in $u$. Furthermore, for any fixed $s$, there holds
\vskip0.1cm

(1) $\varphi(s,u)=-\frac{1}{4\pi}\log(u)+O(1)$, \ \ if $u\rightarrow 0$;
\vskip0.1cm

(2) $\varphi(s,u)=O(u^{-\sigma})$,\ \ if $u\rightarrow +\infty$.
\end{lemma}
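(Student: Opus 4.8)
\smallskip
\noindent\emph{Proof strategy.}
The plan is to treat the three assertions separately, the last one being the only nontrivial one. First, for $\sigma>0$ and $u>0$ the integral converges absolutely, since near $t=0$ and $t=1$ the integrand is dominated by $t^{\sigma-1}$ and $(1-t)^{\sigma-1}$ respectively (both integrable), while $(t+u)^{-s}$ stays bounded on $[0,1]$ for each fixed $u>0$; here $|(t+u)^{-s}|=(t+u)^{-\sigma}$ because $t+u>0$. To obtain analyticity in $s$ and $C^\infty$-dependence on $u$, I would differentiate under the integral sign: the integrand $\exp\big((s-1)\log t+(s-1)\log(1-t)-s\log(t+u)\big)$ is entire in $s$ and smooth in $u$ for each $t\in(0,1)$, and each $\partial_s$ or $\partial_u$ only inserts bounded logarithmic or power factors that preserve integrability near the endpoints, with a dominating function locally uniform in $(s,u)$. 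Morera's theorem in $s$ and the standard differentiation-under-the-integral theorem in $u$ then give the claim.

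For assertion (2) the bound is immediate: since $t+u\ge u$ we have $|(t+u)^{-s}|=(t+u)^{-\sigma}\le u^{-\sigma}$, hence
$$
|\varphi(s,u)|\le\frac{1}{4\pi}u^{-\sigma}\int_0^1 t^{\sigma-1}(1-t)^{\sigma-1}\,dt=\frac{B(\sigma,\sigma)}{4\pi}\,u^{-\sigma}=O(u^{-\sigma}),\qquad u\to+\infty.
$$

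Assertion (1) is where the work lies, and the idea is to isolate the logarithm by a rescaling near $t=0$. Fix $\delta\in(0,\tfrac12)$ and split $\int_0^1=\int_0^\delta+\int_\delta^1$. On $[\delta,1]$ one has $t+u\ge\delta$, so $|(t+u)^{-s}|\le\delta^{-\sigma}$ uniformly for $u\in(0,1)$ and $\int_\delta^1 t^{\sigma-1}(1-t)^{\sigma-1}\,dt<\infty$, so that piece is $O(1)$. On $[0,\delta]$ I would write $(1-t)^{s-1}=1+r(t)$ with $|r(t)|\le C_\delta t$; since $|t^{s-1}(t+u)^{-s}|\le t^{\sigma-1}(t+u)^{-\sigma}\le t^{-1}$, the contribution of $r(t)$ is $\le C_\delta\int_0^\delta dt=O(1)$, and there remains $\int_0^\delta t^{s-1}(t+u)^{-s}\,dt$. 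The substitution $t=u\tau$ makes the powers of $u$ cancel exactly and turns this into $\int_0^{\delta/u}\tau^{s-1}(1+\tau)^{-s}\,d\tau$. Splitting at $\tau=1$: the piece $\int_0^1\tau^{s-1}(1+\tau)^{-s}\,d\tau$ is a fixed finite constant, while for $\tau\ge1$ one has $\tau^{s-1}(1+\tau)^{-s}=\tau^{-1}(1+\tau^{-1})^{-s}=\tau^{-1}+O(\tau^{-2})$, so $\int_1^{\delta/u}\tau^{s-1}(1+\tau)^{-s}\,d\tau=\log(\delta/u)+O(1)=-\log u+O(1)$. Collecting the pieces yields $\varphi(s,u)=\frac{1}{4\pi}\big(-\log u\big)+O(1)=-\frac{1}{4\pi}\log u+O(1)$ as $u\to0$.

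The main obstacle is not conceptual but one of careful bookkeeping: one must verify that every $O(1)$ above is genuinely uniform as $u\downarrow0$, with implied constants depending only on $\delta$ and $\sigma$ (and, if desired, locally uniformly in $s$). The rescaling $t=u\tau$ is the key mechanism — it is exactly what exhibits the coefficient $-\tfrac1{4\pi}$ of $\log u$ while simultaneously confining the subleading contributions to a bounded remainder. Alternatively, one could factor $(t+u)^{-s}=u^{-s}(1+t/u)^{-s}$ and recognize, via Euler's integral representation, that $\varphi(s,u)=\frac{u^{-s}}{4\pi}B(s,s)\,{}_2F_1(s,s;2s;-1/u)$, then invoke the logarithmic connection formula for ${}_2F_1$ at argument $-\infty$ in the resonant case $c=a+b$; but the direct rescaling argument is shorter and self-contained.
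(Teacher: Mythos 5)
Your proof is correct. Note that the paper itself contains no argument for this lemma: it is quoted verbatim as Lemma 4.1 of Lu--Yang \cite{LuYangQ3}, so there is no in-paper proof to compare against, and your write-up supplies a complete, self-contained substitute. The structure is sound throughout: absolute convergence plus differentiation under the integral (with $|(t+u)^{-s}|=(t+u)^{-\sigma}$) gives analyticity in $s$ and smoothness in $u$; the trivial bound $t+u\ge u$ gives (2) exactly, not just asymptotically; and for (1) the splitting at $t=\delta$, the linearization $(1-t)^{s-1}=1+O(t)$ absorbed via $t^{\sigma-1}(t+u)^{-\sigma}\le t^{-1}$, and above all the rescaling $t=u\tau$, which reduces the singular piece to $\int_0^{\delta/u}\tau^{s-1}(1+\tau)^{-s}\,d\tau=-\log u+O(1)$, together produce the stated expansion with all error terms uniform as $u\downarrow 0$ and constants depending only on $\delta$, $\sigma$ and the fixed $s$, which is all the statement requires. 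Your closing remark that $\varphi(s,u)=\tfrac{u^{-s}}{4\pi}B(s,s)\,{}_2F_1(s,s;2s;-1/u)$ is also the natural bridge to the source: the lemma is used in this paper (with $s=\tfrac12$) precisely through the hypergeometric representation of the resolvent $(-\Delta_{\mathbb{B}^2}-\tfrac14)^{-1}$, and assertion (1) is the standard logarithmic ($c=a+b$) case of the connection formula at infinity; your direct rescaling argument delivers the same asymptotics more elementarily, without invoking hypergeometric theory.
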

Applying Lemma \ref{lem8.1}, we can derive that

\begin{equation}\label{green estimate1}
\begin{split}
G(x,y)&=\varphi(\frac{1}{2},\sinh^2(\frac{\rho(x,y)}{2}))\\
&= -\frac{1}{2\pi}\log(\sinh(\frac{\rho(x,y)}{2}))+C\\
&=-\frac{1}{2\pi}\log \big(\frac{|x-y|}{(1-|x|^2)^{\frac{1}{2}}(1-|y|^2)^{\frac{1}{2}}}\big)+C\\
&=-\frac{1}{2\pi}\log |x-y|+\frac{1}{4\pi}\log (1-|x|^2)+\frac{1}{4\pi}\log (1-|y|^2)+C\\
&\leq -\frac{1}{2\pi}\log |x-y|+C,
\end{split}
\end{equation}
\[\]
where we use the fact

$$
\sinh(\frac{\rho(x,y)}{2})=\frac{|x-y|}{(1-|x|^2)^{\frac{1}{2}}(1-|y|^2)^{\frac{1}{2}}}.
$$
\medskip

\medskip

Now, we are in a position to show the symmetry of solutions to the Hardy-type mean field equation.

\medskip

\begin{proposition}\label{pro1}
	Assume that
$u$ satisfies equation \eqref{quan}.
Then $u$ is positive and radially decreasing about the origin.
\end{proposition}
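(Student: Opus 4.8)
The plan is to run the moving plane method in hyperbolic space $\mathbb{B}^2$ rather than in the Euclidean disc, which is the natural setting since the operator $-\Delta - \frac{1}{(1-|x|^2)^2}$ is (up to a conformal factor) the shifted hyperbolic Laplacian $-\Delta_{\mathbb{B}^2} - \frac14$, and the Hardy singularity disappears in this geometry. Concretely, setting $w = u$ viewed as a function on $\mathbb{B}^2$, equation \eqref{quan} becomes $-\Delta_{\mathbb{B}^2} w - \frac14 w = \lambda e^w \left(\frac{1-|x|^2}{2}\right)^2$ with $w = 0$ on $\partial\mathbb{B}^2$ (in the sense $\rho \to \infty$). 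First I would establish \emph{positivity}: since $u \in \mathcal{H}$ and the Green's function $G(x,y)$ of $-\Delta - \frac{1}{(1-|x|^2)^2}$ is positive (which follows from the explicit formula $G(x,y) = \varphi(\tfrac12, \sinh^2(\tfrac{\rho(x,y)}{2}))$ derived above, together with Lemma \ref{lem8.1} showing $\varphi > 0$), we can write $u(x) = \int_{B_1} G(x,y)\,\lambda e^{u(y)}\,dy > 0$ because the right-hand side $\lambda e^u$ is strictly positive. This also gives the needed regularity to apply the maximum principle.

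Next comes the core \emph{symmetry argument} via moving planes along geodesics. Using the family of totally geodesic lines $T_a(\phi(U_\nu))$ and the hyperbolic reflections $I_{x_2}^a = T_a \circ I_{x_2} \circ T_a$ introduced in Subsection~2.4, I would fix a direction $\nu \in \mathbb{S}^1$ and, for each geodesic hyperplane in the foliation $\{T_a(\phi(U_\nu))\}_a$, compare $w$ with its reflection $w^a := w \circ I^a$. Set $\Sigma_a$ for the cap on one side of the geodesic and $v_a = w^a - w$ on $\Sigma_a$. Because the reflection $I^a$ is a hyperbolic isometry, it commutes with $\Delta_{\mathbb{B}^2}$ and preserves the conformal factor $\left(\frac{1-|x|^2}{2}\right)^2$ appropriately when composed correctly (this is exactly the point of working in $\mathbb{B}^2$: the whole equation is equivariant under hyperbolic isometries), so $v_a$ satisfies a linear equation $-\Delta_{\mathbb{B}^2} v_a - \frac14 v_a - c_a(x) v_a = 0$ in $\Sigma_a$ with $c_a = \lambda\, \frac{e^{w^a} - e^w}{w^a - w} \left(\frac{1-|x|^2}{2}\right)^2 \geq 0$ bounded, and $v_a \geq 0$ on $\partial\Sigma_a$ (since $w = 0$ on the hyperbolic boundary and $w > 0$ inside). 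I would start the plane "at infinity" near $\partial\mathbb{B}^2$ where $\Sigma_a$ is a thin cap, use the narrow-domain maximum principle (valid for $-\Delta_{\mathbb{B}^2} - \frac14$ on thin geodesic caps, since the principal eigenvalue blows up as the cap shrinks and dominates the bounded $c_a + \frac14$), conclude $v_a \geq 0$, then slide the geodesic hyperplane toward the center. The standard argument shows the process continues until the geodesic passes through the origin, and then running it from both sides forces $w$ to be symmetric under $I_\nu$, i.e. under reflection across the geodesic through $0$ perpendicular to $\nu$. Since $\nu \in \mathbb{S}^1$ is arbitrary, $w$ — hence $u$ — is radially symmetric about the origin, and the strong maximum principle applied to $v_a$ (which is not identically zero unless symmetry already holds) gives strict monotonicity: $u$ is radially \emph{decreasing}.

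The main obstacle I anticipate is the rigorous justification of the moving-plane machinery in the hyperbolic setting: one must check carefully that (i) the narrow-domain maximum principle genuinely holds for the shifted operator $-\Delta_{\mathbb{B}^2} - \frac14$ on small geodesic caps — the subtlety being the presence of the $-\frac14$ zeroth-order term, which is handled because the bottom of the $L^2$-spectrum of $-\Delta_{\mathbb{B}^2}$ is exactly $\frac14$ but the first Dirichlet eigenvalue on a \emph{bounded} cap is strictly larger and tends to $+\infty$ as the cap degenerates; (ii) the behavior near the boundary $\partial\mathbb{B}^2 = \partial B_1$ is controlled — here the decay $u = 0$ on $\partial B_1$ together with the Hopf-type estimate coming from $u(x) = \int G(x,y)\lambda e^{u}\,dy$ and the Green's function bound \eqref{green estimate1} provides the requisite control so that the moving plane can be initiated; and (iii) the equivariance computation — verifying that if $w$ solves the equation then so does $w \circ I^a$ for the hyperbolic reflection $I^a$, which amounts to checking that $I^a$ is an isometry of $(\mathbb{B}^2, g)$ and that the Euclidean quantity $\lambda e^u$ transforms correctly under the conformal change, i.e. that $-\Delta u - \frac{u}{(1-|x|^2)^2}$ is, after multiplying by the conformal factor, genuinely the hyperbolic operator which is isometry-invariant. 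Once these three points are secured, the conclusion follows by the classical Gidas–Ni–Nirenberg scheme transported to $\mathbb{B}^2$.
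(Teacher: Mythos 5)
Your positivity step coincides with the paper's (Green representation plus positivity of $G$), but your symmetry argument takes the classical Gidas--Ni--Nirenberg route via pointwise maximum principles, whereas the paper runs the moving plane \emph{in integral form}, comparing $u$ and $u_t$ through the Green representation and closing the argument with the Hardy--Littlewood--Sobolev inequality on $\mathbb{B}^2$ together with the Hardy--Moser--Trudinger inequality. As written, your route has two genuine gaps. First, the equation is \emph{not} equivariant under the hyperbolic reflections $I^a$: only the operator $-\Delta_{\mathbb{B}^2}-\frac14$ is isometry invariant, while the weight $V(x)=\tfrac{(1-|x|^2)^2}{4}$ multiplying $\lambda e^{u}$ is invariant only under isometries fixing the origin. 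Thus $w\circ I^a$ solves the equation with $V$ replaced by $V\circ I^a$, and the difference $v_a$ satisfies your linear equation only up to an extra inhomogeneous term $\lambda e^{w}\,(V\circ I^a-V)$, whose sign (coming from the radial monotonicity of $V$) must be identified and exploited; this is exactly the first integral in the paper's identity \eqref{identity}, and it is the heart of the matter rather than the routine verification you anticipate. Second, the narrow-domain maximum principle you invoke is unavailable: in the hyperbolic metric every cap $\Sigma_a$ cut off by a geodesic is a half-plane, all such half-planes are mutually isometric, of infinite volume and diameter, and the bottom of the Dirichlet spectrum of $-\Delta_{\mathbb{B}^2}$ on each of them equals exactly $\tfrac14$ (they contain arbitrarily large geodesic balls). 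Hence the principal eigenvalue of $-\Delta_{\mathbb{B}^2}-\tfrac14-c_a$ is nonpositive on \emph{every} cap, no matter how thin it looks in the Euclidean picture, and it does not blow up as the geodesic approaches $\partial B_1$; moreover $c_a$ is not even known to be bounded, since no a priori $L^\infty$ bound on $u$ is assumed in \eqref{quan}.

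Consequently the smallness needed to start (and to continue) the plane cannot come from the size of the domain; it must come from smallness of the coefficient $\lambda V e^{u}$ on far caps in an integral norm, which is what the paper extracts: by \eqref{HMT} one has $e^{ru}\in L^1(B_1)$ for every $r$, hence $\big\|\tfrac{(1-|I_{x_2}^t(y)|^2)^2}{4}\lambda e^{u_t}\big\|_{L^{r}(\Sigma_{t}^{u})}$ is small for $t$ sufficiently negative, and the HLS inequality on $\mathbb{B}^2$ then closes the absorption estimate for $\|u_t-u\|_{L^q(\Sigma_{t}^{u})}$. If you insist on a pointwise maximum-principle formulation you would need a substitute, e.g.\ a small-measure (ABP-type) maximum principle with respect to the weighted measure $\lambda V e^{u}\,dV_{\mathbb{B}^2}$, plus a separate treatment of the noncompact ideal boundary where the caps touch $\partial B_1$ (the ``start at infinity'' and the Hopf-type boundary estimate cannot be quoted off the shelf there). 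Repairing these two points essentially pushes you back to the integral formulation that the paper uses.
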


We rewrite the equation \eqref{quan} as

\begin{equation}
\Bigg\{ {\begin{array}{*{20}{c}}
  \medskip
   { - \Delta_{\mathbb{B}^2}  u-\frac{1}{4} u  = \lambda e^u \frac{(1-|x|^2)^2}{4}\ \ \ \ {\rm in} \ \mathbb{B}^2 ,}  \\
     \medskip
 { u\in \mathcal{H}}.
\end{array}} \Bigg.
\end{equation}
\begin{proof}
 By Green's representation theorem, we know that

 \begin{equation}\label{3-7}
 u(x)=\int_{\mathbb{B}^2}G(x,y)\lambda e^u  \frac{(1-|x|^2)^2}{4} dV_{\mathbb{B}^2},
 \end{equation}
 \[\]
 where $G(x,y)$ is the Green's function of the operator $-\Delta_{\mathbb{B}^2}-\frac{1}{4}$ on the Poincare disk $\mathbb{B}^2$. Non-negativity of Green's function directly implies the solution $u$ being positive. Hence, we only need to show the radial symmetry of positive solutions to \eqref{3-7}. We adapt the moving plane method in integral forms of hyperbolic space developed in \cite{LiLuYang}. To perform the moving plane arguments, we fix one specific direction and for such direction, consider $t>0$, $\phi(U_{x_2}^t)$ splits the Poincare disk $\mathbb{B}^2$ into two parts. We denote by $\Sigma_{t}=\bigcup_{s>t}\phi(U_{x_2}^s)$ and $u_{t}(x)=u(I^t_{x_2}(x))$.
 Direct calculations give that for any $x\in \Sigma_{t}$, there hold

 \begin{equation}\begin{split}
 u(x)&=\int_{\Sigma_{t}}G(x,y)\lambda e^u  \frac{(1-|y|^2)^2}{4}dV_{\mathbb{B}^2}+\int_{\mathbb{B}^2\setminus \Sigma_{t}}G(x,y)\lambda e^u  \frac{(1-|y|^2)^2}{4}dV_{\mathbb{B}^2}\\
 &=\int_{\Sigma_{t}}G(x,y)\lambda e^u  \frac{(1-|y|^2)^2}{4}dV_{\mathbb{B}^2}+\int_{\Sigma_{t}}G(x, I^t_{x_2}(y))\lambda e^{u_{t}}  \frac{(1-|I_{x_2}^t(y)|^2)^2}{4}dV_{\mathbb{B}^2}
 \end{split}\end{equation}
 and
 \begin{equation}\begin{split}
 u_t(x)&=\int_{\Sigma_{t}}G(I_{x_2}^t(x),y)\lambda e^u \frac{(1-|y|^2)^2}{4}dV_{\mathbb{B}^2}+\int_{\mathbb{B}^2\setminus \Sigma_{t}}G(I_{x_2}^t(x), y)\lambda e^u \frac{(1-|y|^2)^2}{4}dV_{\mathbb{B}^2}\\
 &=\int_{\Sigma_{t}}G(I_{x_2}^t(x),y) \lambda e^u \frac{(1-|y|^2)^2}{4}dV_{\mathbb{B}^2}+\int_{\Sigma_{t}}G(I_{x_2}^t(x), I_{x_2}^t(y))\lambda e^{u_t}  \frac{(1-|I_{x_2}^t(y)|^2)^2}{4}dV_{\mathbb{B}^2}.
 \end{split}\end{equation}
 \[\]
Since $I_{x_2}^t$ is an isometry, we have $G(I_{x_2}^t(x),y)=G(x, I_{x_2}^t(y))$ and $G(I_{x_2}^t(x), I_{x_2}^t(y))=G(x,y)$. Hence we can derive that

\begin{equation}
\begin{split}
\label{identity}
u(x)-u_t(x)&=\int_{\Sigma_{t}}\big(G(I_{x_2}^t(x),y)-G(x,y)\big)\Big(\frac{(1-|I_{x_2}^t(y)|^2)^2}{4}-\frac{(1-|y|^2)^2}{4}\Big)\lambda e^{u}dV_{\mathbb{B}^2}\\
& \ \ +\int_{\Sigma_{t}}\left(G(I_{x_2}^t(x),y)-G(x,y)\right)
\frac{(1-|I_{x_2}^t(y)|^2)^2}{4}\lambda\big(e^{u_t}-e^{u}\big)dV_{\mathbb{B}^2}.
\end{split}
\end{equation}
\medskip

\emph{\textbf{Step 1}}. We compare the values of $u_t(x)$ and $u(x)$. We first show that for $t$ sufficiently negative, there holds

\begin{equation}
\label{starting}u_t(x)\leq u(x),\ \ \forall x \in \Sigma_{t}.
\end{equation}
Define

$$
\Sigma_{t}^{u}=\{x\in \Sigma_{t}:\ u(x)<u_t(x)\}.
$$
\[\]
We will show that for $t$ sufficiently negative, $\Sigma_{t}^{u}$ must be empty.
 By \eqref{identity} and the mean-value theorem, we derive
\begin{equation}
\begin{split}\notag\\
u_{t}(x)-u(x)&\leq \int_{\Sigma_{t}}\left(G(x,y)-G(I_{x_2}^t(x),y)\right)\frac{(1-|I_{x_2}^t(y)|^2)^2}{4}\lambda\big(e^{u_t}-e^{u}\big)dV_{\mathbb{B}^2}\\
&\leq \int_{\Sigma_{t}^u}\left(G(x,y)-G(I_{x_2}^t(x),y)\right)\frac{(1-|I_{x_2}^t(y)|^2)^2}{4}\lambda\big(e^{u_t}-e^{u}\big)dV_{\mathbb{B}^2}\\
&\leq \int_{\Sigma_{t}^u}G(x,y)\frac{(1-|I_{x_2}^t(y)|^2)^2}{4}\lambda e^{u_t}(u_t-u)dV_{\mathbb{B}^2}.
\end{split}
\end{equation}
\[\]
Through Hardy-Littlewood-Sobolev inequality in Poincare disk (see subsection 2.2), we derive that

\begin{equation}\begin{split}
\|u_{t}-u\|_{L^q(\Sigma_{t}^{u})}&\leq C_{q}^{-1}\|\ \frac{(1-|I_{x_2}^t(y)|^2)^2}{4}\lambda e^{u_t}(u_t-u)\|_{L^{q'}(\Sigma_{t}^{u})}\\
&\lesssim \|\frac{(1-|I_{x_2}^t(y)|^2)^2}{4}\lambda e^{u_t}\|_{L^{r}(\Sigma_{t}^{u})}\|u_t-u\|_{L^q(\Sigma_{t}^{u})},
\end{split}\end{equation}
\[\]
where $2<q<+\infty$ and $\frac{1}{q'}=\frac{1}{q}+\frac{1}{r}$. By Hardy-Moser-Trudinger inequality \eqref{HMT}, we know that

\begin{equation}\begin{split}
\int_{\mathbb{B}^2}\Big(\frac{(1-|I^{t}_{x_{2}}(y)|^2)^2}{4}e^{u_{t}}\Big)^rdV_{\mathbb{B}^2}\leq \int_{B_1}e^{r u_{t}}dx<+\infty.
\end{split}\end{equation}
\[\]
Hence, $\|\frac{(1-|I_{x_2}^t(y)|^2)^2}{4}\lambda e^{u_t}\|_{L^{r}(\Sigma_{t}^{u})}$ is sufficiently small for sufficiently negative $t$. This implies that $\Sigma_{t}^{u}$ must be empty for sufficiently negative $t$. Then we accomplish the proof of Step 1.
\medskip

\emph{\textbf{Step 2}}. Inequality \eqref{starting} provides a starting point to move the plane $\phi(U_{x_2}^t)$. Define

$$t_0=\sup \{\ t:\ u_s(x)\leq u(x), \ s\leq t,\ \ \forall x\in \Sigma_{s}\}.$$
\[\]
We show that $t_0\geq 0$. Suppose on the contrary that $t_0<0$, we only need to prove that the plane can be moved further to the right, which is a contradiction with the definition of $t_0$.  Obviously,

$$u(x)\geq u_{t_0}(x),\ \ \forall x\in \Sigma_{t_0}.$$
\[\]
  By \eqref{identity}, we see that

   $$u(x)>u_{t_0}(x),\ \ \forall x\in \Sigma_{t_0}.$$
 \[\]
 Next, we will show that there exists an $\epsilon>0$ such that for any $t\in [t_0, t_0+\epsilon)$, there holds

$$u(x)> u_{t}(x),\ \ \forall x\in \Sigma_{t}.$$
\[\]
Let $$\overline{\Sigma_{t_0}^{u}}=\{x\in \Sigma_{t_0}\ |\ u(x)\leq  u_{t_0}(x)\}.$$
\[\]
Obviously, there hold that $\overline{\Sigma_{t_0}^{u}}$ has the measure zero and $\lim\limits_{t\rightarrow t_0}\Sigma_{t}^{u}\subseteq \overline{\Sigma_{t_0}^{u}}$. Then it follows that there exists $\epsilon>0$ such that for any $t\in [t_0, t_0+\epsilon)$, the integral $\|\frac{(1-|I_{x_2}^t(y)|^2)^2}{4}\lambda e^{u_t}\|_{L^{r}(\Sigma_{t}^{u})}$ is sufficiently small. Recall that

 $$
\|u_t-u\|_{L^q(\Sigma_{t}^{u})}\lesssim  \Big\|\frac{(1-|I_{x_2}^t(y)|^2)^2}{4}\lambda e^{u_t}\Big\|_{L^{r}(\Sigma_{t}^{u})}\|u_t-u\|_{L^q(\Sigma_{t}^{u})}.
$$
\[\]
This deduces that for any $t\in [t_0, t_0+\epsilon)$, there holds $u(x)\geq u_t(x),\ \ \forall x\in \Sigma_{t}$. Then we accomplish the proof of Step 2.
\medskip

\emph{\textbf{Step 3}}. Since $t_0\geq 0$, we derive that $u(x)\geq u_0(x),\ \forall x\in \Sigma_{0}$. Observing $\Sigma_{0}=\{(x_1, x_2)\in \mathbb{B}^2, x_2\geq 0\}$, hence we derive that for any $x\in \mathbb{B}^2$ with $x_2\geq 0$, $u(x_1, x_2)\geq u(x_1, -x_2)$. Similarly, we can move the plane from the $t=+\infty$ to derive that $u(x)\leq u_0(x),\ \forall x\in \Sigma_{0}$. To sum up, we conclude that for any $x\in \mathbb{B}^2$,
there holds $u(x_1,x_2)=u(x_1,-x_2)$. Since the direction can be chosen arbitrarily, we conclude that $u(x)$ is radially symmetric and strictly decreasing about the origin.
\end{proof}

\subsection{Quantitative properties for the Hardy-type mean field equation}
In this subsection, we will establish the quantitative properties for solutions of the Hardy-type mean field equation (\ref{quan}).
  The proof will be divided into two steps. In step 1, we will show that the solution $u_\lambda$ of equation (\ref{quan}) must blow up at the origin $0$ as $\lambda\rightarrow 0$. In step 2, we further prove that

$$\lim\limits_{\lambda\rightarrow 0}\int_{B_1}  {\lambda e^{u_\lambda} dx}= 8\pi \ \ \
{\rm and} \ \ \
\lim\limits_{\lambda\rightarrow 0}u_\lambda(x)=  8\pi {G( x,0)} \ \ \ {\rm  in} \ \ \ C^1_{loc}(B_1\backslash\{0\}).$$
\medskip

\emph{\textbf{Step 1}}. we will show that $u_\lambda$ just blows up at the origin $0$ when $\lambda$ approaches to zero.
 We first present a similar Brezis-Merle Lemma.
\begin{lemma}
\label{B-M}
Let $u$ be a $C^2$ solution of

\begin{equation}
\left\{ {\begin{array}{*{20}{c}}
   { (- \Delta  - \frac{1}{(1-|x|^2)^2})u = f(u)}  & {{\rm in} \ B_1} , \\
   {u = 0} & {\ \ {\rm on} \ \partial B_1},  \\
\end{array}} \right.
\label{3.1}
\end{equation}
\[\]
where $f\in L^1({B_1} )$ and $f\geq 0$. Then for any $\delta \in(0,4\pi)$, there will hold

\[
\int_{B_1}  {\exp \left\{ {\frac{\left( {4\pi  - \delta } \right)\left|u\right|}{{\left\| f \right\|}_{{L^1}\left({B_1} \right)}}} \right\}dx} \leq C.
\]
\end{lemma}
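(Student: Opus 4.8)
The plan is to mimic Brezis–Merle's original argument, using the representation of $u$ through the Green's function $G(x,y)$ of $-\Delta-\frac{1}{(1-|x|^2)^2}$ which we have already analyzed. Since $u=0$ on $\partial B_1$ and $u$ solves \eqref{3.1}, Green's representation gives
$$
u(x)=\int_{B_1}G(x,y)f(u(y))\,dy .
$$
By the key estimate \eqref{green estimate1}, $G(x,y)\le -\frac{1}{2\pi}\ln|x-y|+C$, and since $f\ge 0$ we obtain the pointwise bound
$$
|u(x)|\le \frac{1}{2\pi}\int_{B_1}\ln\frac{1}{|x-y|}\,f(u(y))\,dy + C\|f\|_{L^1(B_1)}.
$$
This reduces the problem to a purely potential-theoretic statement: if $f\ge0$ with $\|f\|_{L^1(B_1)}=:\|f\|$, then $w(x):=\frac{1}{2\pi}\int_{B_1}\ln\frac{1}{|x-y|}f(y)\,dy$ satisfies $\int_{B_1}\exp\{(4\pi-\delta)|w|/\|f\|\}\,dx\le C$.

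The next step is Jensen's inequality applied to the probability measure $d\mu=\frac{f(y)\,dy}{\|f\|}$ on $B_1$. Writing
$$
\frac{(4\pi-\delta)|w(x)|}{\|f\|}\le \frac{4\pi-\delta}{2\pi}\int_{B_1}\ln\frac{1}{|x-y|}\,d\mu(y)+\frac{(4\pi-\delta)C}{1},
$$
we apply Jensen's inequality to the convex function $t\mapsto e^t$, obtaining
$$
\exp\Big\{\frac{(4\pi-\delta)|w(x)|}{\|f\|}\Big\}\le e^{C'}\int_{B_1}|x-y|^{-\frac{4\pi-\delta}{2\pi}}\,d\mu(y).
$$
Integrating in $x$ over $B_1$ and using Fubini (everything is nonnegative),
$$
\int_{B_1}\exp\Big\{\frac{(4\pi-\delta)|w(x)|}{\|f\|}\Big\}\,dx\le e^{C'}\int_{B_1}\Big(\int_{B_1}|x-y|^{-\frac{4\pi-\delta}{2\pi}}\,dx\Big)d\mu(y).
$$
The inner integral is finite and bounded uniformly in $y\in B_1$ precisely because the exponent satisfies $\frac{4\pi-\delta}{2\pi}<2$ for $\delta\in(0,4\pi)$, so $|x-y|^{-\frac{4\pi-\delta}{2\pi}}$ is integrable in two dimensions; since $d\mu$ is a probability measure the whole expression is $\le C$, which is exactly the claim.

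The main subtlety — and the only place where care is genuinely required — is the interplay between the Hardy term and the logarithmic behavior of $G$: one must be sure that the correction terms $\frac{1}{4\pi}\ln(1-|x|^2)+\frac{1}{4\pi}\ln(1-|y|^2)$ appearing in \eqref{green estimate1} do not spoil the bound. Since $\ln(1-|x|^2)\le 0$ on $B_1$ this only helps the upper bound on $u$ (and for the lower bound one notes $|u|$ is what enters, with $u\ge0$ by positivity of $G$, so in fact $|u|=u$ and only the upper estimate on $G$ is needed). A second minor point is justifying Green's representation for $u\in\mathcal{H}\cap C^2$ with the singular Hardy potential; this follows from the regularity theory already implicit in the setup (the equation is uniformly elliptic on compact subsets of $B_1$ and $u=0$ on the boundary), together with the explicit integrability of $G(x,\cdot)$ established via Lemma \ref{lem8.1}. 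With these observations in place the proof is a direct adaptation of the Brezis–Merle scheme.
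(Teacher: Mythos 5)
Your proposal is correct and follows essentially the same route as the paper's proof: Green's representation together with the upper bound $G(x,y)\le -\frac{1}{2\pi}\ln|x-y|+C$ from \eqref{green estimate1}, then Jensen's inequality with respect to the probability measure $f\,dy/\|f\|_{L^1(B_1)}$, and finally Fubini plus the local integrability of $|x-y|^{-\frac{4\pi-\delta}{2\pi}}$ in dimension two since the exponent is strictly less than $2$. The only differences are expository (you spell out the probability-measure normalization, the sign of the $\ln(1-|x|^2)$ corrections, and the positivity of $u$), so no further changes are needed.
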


\begin{proof}
Using Green's  representation theorem  and the monotonicity of $G(x,y)$(see \eqref{green estimate1}), for any $x\in B_1$,

\[
u\left( x \right)
=
\int_{B_1}  G(x,y)f(u(y))dy
\leq
- \frac{1}{2\pi }\int_{B_1} \log \left| {x - y} \right|f(u(y))dy
+C\int_{B_1}f(u(y))dy.
 \]
\[\]
By Jensen's inequality, we deduce that

\begin{align*}
 \int_{B_1}   {\exp \Big\{ {( {4\pi-\delta } )\frac{\left|u\right|}{{{{\left\| f \right\|}_{{L^1}\left({B_1} \right)}}}}} \Big\}dx}&
 \leq
 \int_{B_1}   {\exp \Big\{ {\left( {4}\pi  - \delta  \right)\int_{B_1}  \frac{(\log \left| {x - y} \right|^{ - \frac{1}{2\pi }}+C) f(u(y))}{\|f\|_{{L^1}(B_1)}} dy}  \Big\}dx}  \\
  &\leq C\int_{B_1}  {\int_{B_1} {\exp \left\{ {\left( {4}\pi  - \delta  \right)\big(\log \left| {x - y} \right|^{ - \frac{1}{2\pi }} }  +C\big)\right\}} dydx} \\
 &
  =e^{C}\int_{B_1}  {\int_{B_1}  | x - y|^{ - \frac{4\pi  - \delta}{2\pi }}dydx}
  \leq C .
 \end{align*}
\end{proof}

Using Lemma \ref{B-M}, one can derive that $u_\lambda \in L^p(B_1)$ for any $p\geq 1$, i.e.,

$$\int_{B_1} u_\lambda (x)dx \leq C.$$
\[\]
Combining the radial lemma, for any $r\leq 1$, there holds that

$$u_\lambda (r)r^2\leq C \int_{B_1} u_\lambda (x)dx,$$
\[\]
which implies that $u_\lambda\leq \frac{C}{r^2}$ for some constant C. Then for any $x\in B_1\backslash \{0\}$, one can obtain

$$\|u_\lambda\|_{L^\infty(B_1\backslash \{0\})}\leq C.$$
\[\]
Indeed, $u_\lambda(0)$ is unbounded when $\lambda$ approaches to zero. If not, there
exists some positive constant $C$ such that $\|u_\lambda\|_{L^\infty(B_1)}\leq C$.
One can easily conclude that

\[\lim\limits_{\lambda\rightarrow 0}\lambda\int_{B_1} {e^{u_\lambda}} dx\leq \lim\limits_{\lambda\rightarrow 0}\lambda e^{\|u_\lambda\|_{L^\infty(B_1)}}|B_1|= 0,
\]
\[\]
which contradicts with the assumption, $\lambda\int_{B_1}{e^{u_\lambda}} dx\geq C_1 >0$, from Theorem \ref{Th1.1th}.
Thus, $u_\lambda$  must have and only have one blow-up point $0$ when $\lambda$ approaches to zero.
\medskip

\medskip

\emph{\textbf{Step 2}}. we will show that
$u_\lambda(x)\rightarrow  u_0(x)=8\pi G( x,0)$ in $C_{loc}^1(B_1 \backslash \{0\})$ as $\lambda\rightarrow 0$.
\medskip

Defining $\mu_\lambda:=\lambda e^{u_\lambda}dx$, then $\mu_\lambda(B_1)=\int_{B_1}\lambda e^{u_\lambda}dx\leq C$. Hence, there exists a $\mu_0\in\mathfrak{M}(B_1)$, the set of all real bounded Borel measures on $B_{1},$ such that $\mu_\lambda\rightharpoonup \mu_0$ in the sense of measure.
By the previous estimate about $\|u_\lambda\|_{L^\infty(B_1 \backslash \{0\})},$
we know that

\[
\lim\limits_{\lambda\rightarrow 0} \mu_\lambda(B_1\backslash \{0\})= \lim\limits_{\lambda\rightarrow 0}\int_{B_1\backslash \{0\}}\lambda e^{u_\lambda}dx= 0.
\]
\[\]
Then it implies that

$$
\mu_\lambda\rightharpoonup \mu_0= \mu_0(0)\delta_{0}.
$$
\medskip

Next, we claim that
\begin{lemma}
\label{8th}
$u_\lambda(x)\rightarrow \mu_0(0)G(x,0)$  in $C_{loc}^1(B_1\backslash \{0\})$ as $\lambda\rightarrow 0$.
\end{lemma}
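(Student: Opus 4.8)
The plan is to pass to the limit in the Green's representation formula (3.7) for $u_\lambda$ and use the weak-$*$ convergence $\mu_\lambda \rightharpoonup \mu_0(0)\delta_0$ together with uniform integrability of the Green kernel. First I would recall from (3.7) that
\[
u_\lambda(x) = \int_{\mathbb{B}^2} G(x,y)\, d\mu_\lambda(y) = \int_{B_1} G(x,y)\,\lambda e^{u_\lambda(y)}\,dy,
\]
and fix a compact set $K \Subset B_1 \setminus \{0\}$. For $x \in K$ we split the integral into a piece over a small ball $B_{2r}(0)$ and the complement. On $B_1 \setminus B_{2r}(0)$ the kernel $G(x,\cdot)$ is continuous and bounded (uniformly for $x \in K$, since $x$ stays away from $B_r(0)$ where the only possible mass concentrates), so by weak-$*$ convergence of $\mu_\lambda$ and the fact that $\mu_\lambda$ charges $B_1\setminus B_{2r}(0)$ with mass tending to $0$, this contribution tends to $0$. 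On $B_{2r}(0)$, since $0 \notin K$ the singularity of $G(x,\cdot)$ at $y=x$ is not met; the remaining subtlety is the logarithmic singularity of $G(x,y)$ as $y \to \partial B_1$ — but we only integrate over $B_{2r}(0)$, away from the boundary, so $G(x,\cdot)$ is bounded there too, and this piece converges to $\mu_0(0) G(x,0)$ by weak-$*$ convergence (test function $y \mapsto G(x,y)$ restricted to a neighborhood of $0$, continuous there). Hence $u_\lambda(x) \to \mu_0(0) G(x,0)$ pointwise on $B_1\setminus\{0\}$, and the convergence is uniform on $K$ by the same estimates made uniform in $x \in K$.

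Next I would upgrade pointwise/uniform convergence to $C^1_{loc}$ convergence by elliptic regularity. On a slightly larger compact set $K' $ with $K \Subset \mathrm{int}(K') \Subset B_1 \setminus \{0\}$, we have the equation $-\Delta u_\lambda = \frac{1}{(1-|x|^2)^2} u_\lambda + \lambda e^{u_\lambda}$. The first term is bounded in $L^\infty(K')$ because $\|u_\lambda\|_{L^\infty(B_1\setminus\{0\})} \le C$ (established in Step 1) and $(1-|x|^2)^{-2}$ is bounded on $K'$; the second term $\lambda e^{u_\lambda}$ is bounded in $L^\infty(K')$ as well, in fact it tends to $0$ uniformly on $K'$ since $\mu_\lambda(B_1\setminus\{0\}) \to 0$ and $u_\lambda$ is uniformly bounded there (more carefully: $\lambda e^{u_\lambda} \le e^{C}\lambda \to 0$). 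So the right-hand side is bounded in $L^p(K')$ for every $p<\infty$; by $W^{2,p}$ estimates $u_\lambda$ is bounded in $W^{2,p}(K)$, hence in $C^{1,\alpha}(K)$ by Sobolev embedding. By Arzelà–Ascoli every subsequence has a $C^1(K)$-convergent sub-subsequence, and since the limit is already identified as $\mu_0(0)G(\cdot,0)$ from the pointwise convergence, the full family converges in $C^1(K)$. As $K$ was arbitrary this gives $u_\lambda \to \mu_0(0)G(\cdot,0)$ in $C^1_{loc}(B_1\setminus\{0\})$.

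I expect the main obstacle to be the careful bookkeeping that the Green kernel $G(x,y)$, while singular both at the diagonal $x=y$ and at the boundary $\partial B_1$ (where it behaves like $-\tfrac{1}{2\pi}\ln(1-|y|)$ type contributions from the estimate in \eqref{green estimate1}), is nevertheless \emph{continuous and bounded in $y$ on a neighborhood of $0$, uniformly for $x$ in a fixed compact $K \Subset B_1\setminus\{0\}$}. This is exactly what makes $y \mapsto G(x,y)$ a legitimate test function against $\mu_\lambda$ near the support of $\mu_0$, and it must be checked that the bound is uniform in $x\in K$ so that the convergence is uniform. The complementary estimate — that the contribution of $G(x,\cdot)$ against the part of $\mu_\lambda$ living away from $0$ vanishes — is easy once we know that mass escapes only to the origin and that $G(x,y)$ on $\{|x-y|\ge \text{const}\}\cap\{y \text{ away from }\partial B_1\}$ is bounded; near the boundary one uses $G(x,y)\le -\tfrac{1}{2\pi}\ln|x-y| + C$ which is integrable in $y$, combined with the fact that $\lambda e^{u_\lambda}$ is uniformly bounded on $B_1\setminus B_r(0)$, so this tail piece is $O(\lambda) \to 0$. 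Everything else is routine weak-$*$ convergence and elliptic regularity.
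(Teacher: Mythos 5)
Your proof is correct, but it follows a genuinely different route from the paper. The paper never passes to the limit in the representation formula directly: it runs a Brezis--Merle type argument, testing the equation with the truncations $\min\{u_\lambda,t\}$ and using rearrangement to get exponential decay of the level sets, then testing with $\log\frac{1+2u_\lambda}{1+u_\lambda}$ to obtain uniform bounds in $W^{1,q}_0(B_1)$ for every $1<q<2$; the weak $W^{1,q}_0$ limit $u_0$ is then shown to solve $-\Delta u_0-\frac{u_0}{(1-|x|^2)^2}=\mu_0(0)\delta_0$ and is identified with $\mu_0(0)G(\cdot,0)$ by citing Li--Ruf, after which interior regularity gives the $C^1_{loc}$ convergence. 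You instead exploit the Green representation \eqref{3-7} (already established for the moving-plane argument), the bound \eqref{green estimate1} together with $G\ge 0$, the uniform $L^\infty$ bound on $u_\lambda$ away from the origin from Step 1, and the weak-$*$ convergence $\mu_\lambda\rightharpoonup\mu_0(0)\delta_0$, then upgrade the pointwise limit to $C^1_{loc}$ by interior $W^{2,p}$ estimates and Arzel\`a--Ascoli; this bypasses the truncation/rearrangement machinery and the external identification lemma entirely, at the price of relying on the representation formula and on the pointwise bound away from the origin (which itself uses the radial monotonicity of $u_\lambda$). The paper's route yields, as a by-product, the uniform $W^{1,q}_0$ bounds, which have independent interest; yours is shorter and more self-contained given what precedes the lemma. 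Two small points to tighten: when you test the weak-$*$ convergence against $y\mapsto G(x,y)$ restricted to $B_{2r}(0)$, insert a continuous cutoff (the discrepancy lives in the annulus $B_{2r}\setminus B_r$, where the kernel is bounded and $\mu_\lambda$-mass tends to zero), and note that the claimed uniformity in $x\in K$ of the first-step convergence is not actually needed, since the limit in $C^1(K)$ is identified from the pointwise limit via the compactness argument you already invoke.
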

\medskip

\begin{proof}
Since $u_\lambda\in W^{1,2}_0(B_1)$ satisfies the equation

 \begin{equation}
\label{2.6-1}
{- \Delta u_\lambda- \frac{1}{(1-|x|^2)^2} u_\lambda = \lambda e^{u_\lambda} \in L^1(B_1)},
\end{equation}
\[\]
testing equation \eqref{2.6-1} with $u_\lambda^t:=\min\{u_\lambda,t\}$,
we obtain that

\[ \int_{B_1}  {|\nabla u_\lambda^t|^2 dx}-\int_{B_1}\frac{|u_\lambda^t|^2}{(1-|x|^2)^2}dx
   =\int_{B_1}\lambda e^{u_\lambda^t}u_\lambda^t dx \leq C_2 t.
  \]
  \[\]
 Let $u_\lambda^*$ be the classical rearrangement of $u_\lambda^t$ and $|B_\rho(x)|=|\{x\in B_r(x):u_\lambda^*\geq t\}|$, where $B_r=\{x\in \mathbb{R}^2:\|x\|\leq r\}$. According to the classical rearrangement, we have that

 \begin{equation}
\label{2.6}
\mathop {\inf }\limits_{\phi\in W^{1,2}_0(B_r),\ \phi|_{B_\rho}=t}\int_{B_r} {|\nabla \phi|^2}dx  \leq \int_{B_r} {|\nabla u_\lambda^*|^2}dx \leq C_2 t+\int_{B_1}\frac{|u_\lambda^t|^2}{(1-|x|^2)^2}.
\end{equation}
\[\]
We have known that $ u_\lambda\in W^{1,2}_0(B_1)$, which implies that $\int_{B_1}\frac{|u_\lambda^t|^2}{(1-|x|^2)^2}\leq C
$. Thus, the right hand of \eqref{2.6} can be written as $ C$.
  It is not difficult to check that (see \cite{Li}) the infimum on the left-hand side of \eqref{2.6} is attained by

  \[ \varphi _1  ( x)=\left\{ {\begin{array}{*{20}c}
   \medskip

   {t \log  \frac{r} {|x|}/{\log \frac{r} {\rho}}, \ \ \  {\rm in}\ B_r\backslash {B_\rho}},  \\
   {t,\ \ \ \ \ \ \  \ \  \ \   {\rm in}\ {B_\rho}.}
 \end{array} } \right.\]
 \[\]
 Calculating $\|\nabla \varphi_1\|_2^2$, by \eqref{2.6}, we get $\rho \leq r e^{-C t}$.
 Thus, there holds

$$
|\{x\in B_1:u_\lambda \geq t\}|=|B_\rho|\leq \pi r^2 e^{-2C t}.
$$
\[\]
Using Taylor's expansion formula, for any $0<v<2C $, we have

\[\int_{B_1}  {e^{v u_\lambda} dx}\leq e^v  +\sum\limits_{m = 1}^\infty e^{v (m+1)}|\{x\in B_1: m\leq u_\lambda\leq m+1\}|
    \leq C ,\]
\[\]
which implies that $u_\lambda$ is uniformly bounded in $L^2(B_1)$.
\medskip

Testing equation \eqref{2.6-1} with $\log \frac{1+2u_\lambda}{1+u_\lambda}$ and applying Young's inequality,
for any $1<q<2$, one can obtain   that

\begin{align*}
   \int_{B_1}  {|\nabla u_\lambda|^q dx}&\leq\int_{B_1} \frac {|\nabla u_\lambda|^2 }{(1+u_\lambda)(1+2u_\lambda)}dx
   +C_{q}\int_{B_{1}}( (1+u_\lambda)(1+2u_\lambda))^\frac{q}{n-q}dx
   \\&\leq \int_{B_1}\lambda e^{u_\lambda}\log \frac{1+2u_\lambda}{1+u_\lambda}+\int_{B_1}\frac{|u_\lambda^t|^2}{(1-|x|^2)^2}\log \frac{1+2u_\lambda}{1+u_\lambda}+C \int_{B_1}  {e^{v u_\lambda} dx}
    \\
    &\leq
    C\log 2+ C\log 2 +C=C.
\end{align*}
\[\]
Namely,
 $u_\lambda$ is uniformly bounded in $W_0^{1,q}(B_1)$ for any $1<q<2$. Then, there exists $u_0\in W^{1,q}_0(B_1)$ such that $u_\lambda \rightharpoonup u_0$ in $W^{1,q}_0(B_1)$, where $u_0$ satisfies

$$- \Delta  u_0- \frac{1}{(1-|x|^2)^2} u_0 =  \mu_0(0)\delta_{0}.$$
\[\]
Similar to Lemma 10 in  \cite{yang}, we derive that

$$ u_0 =\mu_0(0)G(x,0).$$
\[\]
 Applying the regularity estimate for quasilinear differential operator (see \cite{Li}),
we deduce that $u_\lambda\rightarrow u_0$ in $C_{loc}^1(B_1\backslash \{0\})$.
\end{proof}

Thus, in order to prove $u_\lambda(x)\rightarrow  8\pi G(x,0)$ in $C_{loc}^1(
B_1\backslash \{0\})$ as $\lambda\rightarrow 0$,
we just need to verify that
$\mu_0(0)=8\pi$.
First, we need the local Pohozaev identity for the Hardy-type mean field equation (\ref{quan}).
\medskip

\begin{lemma}
\label{Poho1}
For any $r\leq 1$, there holds

\begin{equation}
\label{Poho}
\begin{split}
 4{\int_{B_r}  {F(u_\lambda)dx} } & = {\int_{\partial{B_r}}\left| {\nabla u_\lambda} \right|^2\langle x,\nu\rangle dS}  -\frac{1}{2}
\int_{B_r}u_\lambda^2
\ \Delta (\frac{1}{1-|x|^2}) dx
\\
&+\frac{1}{2}\int_{\partial B_r}u_\lambda^2 {\frac{\partial (\frac{1}{1-|x |^2})}{\partial n}dS}+\int_{\partial B_r}F(u_\lambda)\frac{{\partial (|x|^2)}}{{\partial n}}dS, \\
\end{split}
\end{equation}
where $F(u_\lambda)=\lambda e^{u_\lambda}.$
\end{lemma}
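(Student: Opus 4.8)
The plan is to derive this local Pohozaev identity by the standard technique of multiplying the equation by the scaling vector field $x\cdot\nabla u_\lambda$ and integrating over $B_r$, being careful to track the contribution of the Hardy potential $\frac{1}{(1-|x|^2)^2}$. First I would write the equation as $-\Delta u_\lambda = V(x)u_\lambda + F(u_\lambda)$ with $V(x)=\frac{1}{(1-|x|^2)^2}$, where I will exploit the identity $V(x) = -\tfrac12\Delta\!\big(\tfrac{1}{1-|x|^2}\big) + (\text{lower order})$; in two dimensions a direct computation of $\Delta\big(\tfrac{1}{1-|x|^2}\big)$ shows this is the correct way to rewrite the singular term so that, after integration by parts against $u_\lambda^2$, one produces exactly the $-\tfrac12\int_{B_r}u_\lambda^2\,\Delta(\tfrac{1}{1-|x|^2})$ bulk term and the boundary term $\tfrac12\int_{\partial B_r}u_\lambda^2\,\partial_n(\tfrac{1}{1-|x|^2})$ appearing in \eqref{Poho}. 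The point is that $x\cdot\nabla u_\lambda$ interacts with $V u_\lambda$ through $\int_{B_r}(x\cdot\nabla u_\lambda)\,V\,u_\lambda = \tfrac12\int_{B_r}(x\cdot\nabla(u_\lambda^2))\,V$, and one more integration by parts moves the derivative off $u_\lambda^2$ onto $x\,V(x)$; using $\nabla\cdot(xV) = 2V + x\cdot\nabla V$ and the relation between $V$ and $\Delta(\tfrac{1}{1-|x|^2})$ converts everything into the stated form.

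The three pieces to assemble are: (i) the classical Pohozaev pairing $\int_{B_r}(-\Delta u_\lambda)(x\cdot\nabla u_\lambda)\,dx$, which by integration by parts in dimension $2$ equals $\int_{\partial B_r}\big[(x\cdot\nabla u_\lambda)\partial_n u_\lambda - \tfrac12|x|^2|\nabla u_\lambda|^2\,\partial_n|x|\big]\,dS$ plus the zero bulk term $(2-2)\cdot\tfrac12\int|\nabla u_\lambda|^2$ — since we are in $n=2$ the gradient bulk term drops, and on $\partial B_r$, $x\cdot\nabla u_\lambda$ is radial so $(x\cdot\nabla u_\lambda)\partial_n u_\lambda = |\nabla u_\lambda|^2(x,\nu)$, recovering $\int_{\partial B_r}|\nabla u_\lambda|^2(x,\nu)\,dS$ after combining with the $|\nabla u_\lambda|^2$ boundary term (here one uses that on $\partial B_r$ the tangential derivative may be nonzero, so care is needed — but the radial symmetry from Proposition \ref{pro1} actually kills the tangential part, which is why the final identity is clean; alternatively the two boundary gradient terms combine as in the usual Pohozaev boundary expression); (ii) the nonlinear term $\int_{B_r}F(u_\lambda)(x\cdot\nabla u_\lambda) = \int_{B_r} x\cdot\nabla(\tilde F(u_\lambda))$ where $\tilde F' = F$, i.e. $\tilde F(u_\lambda)=\lambda e^{u_\lambda}=F(u_\lambda)$, which upon integration by parts gives $-2\int_{B_r}F(u_\lambda) + \int_{\partial B_r}F(u_\lambda)(x,\nu)\,dS$, and rewriting $(x,\nu)\,dS = \tfrac12\partial_n(|x|^2)\,dS$ yields the last two terms of \eqref{Poho} once the factor $4=2\cdot 2$ is matched (the $4$ on the left is $2\times$ the dimension-independent factor from $-2\int F$ plus contributions, to be checked against sign conventions); (iii) the Hardy term handled as described above.

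I would expect the main obstacle to be bookkeeping the boundary terms on $\partial B_r$ correctly and confirming that no boundary contribution has been dropped — in particular verifying that the combination of $(x\cdot\nabla u_\lambda)\partial_n u_\lambda$ and $-\tfrac12 r\,|\nabla u_\lambda|^2$ really collapses to $|\nabla u_\lambda|^2(x,\nu)$ as written (this is where one either invokes the radial symmetry of $u_\lambda$, which is available here, or keeps the general Pohozaev boundary form). A secondary technical point is justifying all the integrations by parts near $\partial B_1$ when $r\to 1$: for $r<1$ strictly everything is smooth and compactly supported away from the singular set of $V$, so the manipulations are legitimate, and the case $r=1$ follows by the $C^1$ regularity of $u_\lambda$ up to the boundary together with $u_\lambda|_{\partial B_1}=0$, which makes the $u_\lambda^2$ boundary terms vanish there. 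Modulo these careful but routine computations, the identity \eqref{Poho} follows directly.
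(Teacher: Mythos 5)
Your proposal takes essentially the same route as the paper's proof: multiply the equation by $x\cdot\nabla u_\lambda$, integrate over $B_r$, handle the Laplacian, Hardy and exponential terms separately by parts, and use the radial symmetry of $u_\lambda$ from Proposition \ref{pro1} to collapse the gradient boundary contribution to $\int_{\partial B_r}|\nabla u_\lambda|^2(x,\nu)\,dS$, which is exactly how Lemma \ref{Poho1} is established. One small correction: the operative fact for the Hardy term is the exact identity $\frac{x}{(1-|x|^2)^2}=\frac{1}{2}\nabla\big(\frac{1}{1-|x|^2}\big)$, equivalently $\nabla\cdot\big(\frac{x}{(1-|x|^2)^2}\big)=\frac{1}{2}\Delta\big(\frac{1}{1-|x|^2}\big)$, rather than the approximate relation $V=-\frac{1}{2}\Delta\big(\frac{1}{1-|x|^2}\big)+\text{lower order}$ you state (which is false pointwise, since $\Delta\big(\frac{1}{1-|x|^2}\big)=\frac{4(1+|x|^2)}{(1-|x|^2)^3}$), but the integration-by-parts mechanism you describe is the correct one and, once the routine sign and coefficient bookkeeping is fixed, it yields \eqref{Poho}.
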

\begin{proof}
Multiplying equation (\ref{quan}) by $\langle x, \nabla u_\lambda\rangle$ and integrating over ${B_r}$, we get

\[
\int_{B_r}  {\Big(- \Delta- \frac{1}{(1-|x|^2)^2} \Big) u_\lambda \cdot\langle x, \nabla u_\lambda\rangle dx} = \int_{B_r}\lambda e^{u_\lambda}\cdot \langle x,\nabla u_\lambda\rangle dx.\]
\[\]
We rewrite this expression as
\[A+B=C.\]
\[\]
Via the divergence theorem and direct computation, the term on the left is

\[A:=\int_{B_r}  {- \Delta u_\lambda  \cdot
\langle  x ,\nabla u_{\lambda}\rangle dx} =-\frac{1}{2}\int_{\partial B_r}\left| {\nabla u_\lambda} \right|^2\langle x,\nu \rangle
dS\]
\begin{equation*}
and
\begin{split}
B:&=-\int_{B_r}\frac{u_\lambda}{(1-|x|^2)^2}\cdot \langle
x , \nabla u_{\lambda}\rangle  dx
=-\frac{1}{4}
\int_{B_r}\nabla (\frac{1}{1-|x|^2})\cdot \nabla ( u_\lambda^2)dx
\\&
=\frac{1}{4}
\int_{B_r}u_\lambda^2
\ \Delta (\frac{1}{1-|x|^2}) dx
-\frac{1}{4}\int_{\partial B_r}u_\lambda^2 {\frac{\partial (\frac{1}{1-|x |^2})}{\partial n}dS} .
\end{split}
\end{equation*}
\[\]
And
 the term on the right-hand side is
 \[
C:=\int_{B_r}\lambda  e^{u_\lambda}\cdot \langle
x,\nabla u_\lambda\rangle
dx=\frac{1}{2}\int_{\partial B_r}F(u_\lambda)\frac{{\partial (|x |^2)}}{\partial n}dS-2\int_{ B_r}F(u_\lambda)dx.\]
Combining above estimates, we accomplish the proof of Lemma \ref{Poho1}.
\end{proof}
\medskip

\begin{lemma}
\label{9th}
$\mu_0(0)=8\pi$.
\end{lemma}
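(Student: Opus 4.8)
The plan is to pass to the limit in the local Pohozaev identity of Lemma \ref{Poho1} on a fixed small ball $B_r\subset B_1\setminus\{0\}$, controlling every boundary term via the already-established $C^1_{loc}$ convergence $u_\lambda\to\mu_0(0)G(\cdot,0)$ and the bulk concentration $\lambda e^{u_\lambda}\rightharpoonup\mu_0(0)\delta_0$. First I would examine the left-hand side: $4\int_{B_r}F(u_\lambda)\,dx=4\int_{B_r}\lambda e^{u_\lambda}\,dx\to 4\mu_0(0)$ as $\lambda\to 0$, since the whole mass concentrates at $0\in B_r$. Next I would handle the last boundary term $\int_{\partial B_r}F(u_\lambda)\frac{\partial(|x|^2)}{\partial n}\,dS=2r\int_{\partial B_r}\lambda e^{u_\lambda}\,dS$, which vanishes in the limit because $\|u_\lambda\|_{L^\infty(B_1\setminus\{0\})}\le C$ forces $\lambda e^{u_\lambda}\to 0$ uniformly on $\partial B_r$. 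Similarly the two Hardy-type terms $-\tfrac12\int_{B_r}u_\lambda^2\Delta(\tfrac1{1-|x|^2})\,dx$ and $\tfrac12\int_{\partial B_r}u_\lambda^2\frac{\partial(\tfrac1{1-|x|^2})}{\partial n}\,dS$ converge to the corresponding expressions with $u_\lambda$ replaced by $\mu_0(0)G(x,0)$, using $C^1_{loc}$ convergence away from the origin (note $\Delta(\tfrac1{1-|x|^2})$ and its normal derivative are smooth and bounded on $B_r$).

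The remaining term is $\int_{\partial B_r}|\nabla u_\lambda|^2(x,\nu)\,dS$, which by $C^1_{loc}$ convergence tends to $\mu_0(0)^2\int_{\partial B_r}|\nabla G(x,0)|^2(x,\nu)\,dS$. So in the limit the identity reads
\begin{equation*}
4\mu_0(0)=\mu_0(0)^2\Bigl[\int_{\partial B_r}|\nabla G(x,0)|^2(x,\nu)\,dS-\tfrac12\int_{B_r}G(x,0)^2\Delta(\tfrac1{1-|x|^2})\,dx+\tfrac12\int_{\partial B_r}G(x,0)^2\tfrac{\partial(\tfrac1{1-|x|^2})}{\partial n}\,dS\Bigr].
\end{equation*}
The strategy is then to evaluate the bracket, call it $\mathcal{I}(r)$, and show it equals $\tfrac12$ for every small $r$, so that $4\mu_0(0)=\tfrac12\mu_0(0)^2$, i.e.\ $\mu_0(0)=8\pi$ (the value $0$ being excluded since $u_\lambda$ blows up). To compute $\mathcal{I}(r)$ I would use the Green's function expansion $G(x,0)=-\tfrac1{2\pi}\ln|x|+h(x)$ with $h$ smooth near $0$ (from \eqref{green estimate1}, where the smooth part is $\tfrac1{4\pi}\ln(1-|x|^2)+(\text{harmonic-type remainder})$), substitute, and let $r\to 0$: the leading singular part $-\tfrac1{2\pi}\ln|x|$ is exactly the fundamental solution of $-\Delta$ in $\mathbb{R}^2$, for which the classical Pohozaev computation on $\partial B_r$ gives $\int_{\partial B_r}|\nabla(-\tfrac1{2\pi}\ln|x|)|^2(x,\nu)\,dS=\tfrac1{2\pi}$; the cross terms and the $h$-terms, together with the Hardy contributions, are $O(r\log^2 r)\to 0$. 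Alternatively, since $\mathcal{I}(r)$ turns out to be independent of $r$ (it must be, since the left side is), I can simply take $r\to 0$ and read off $\mathcal{I}=\tfrac12$.

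The main obstacle I anticipate is the bookkeeping in evaluating $\mathcal{I}(r)$: one must separate the logarithmic singularity from the regular part of $G(x,0)$, verify that all mixed and lower-order terms (including the Hardy bulk integral $\int_{B_r}G(x,0)^2\Delta(\tfrac1{1-|x|^2})\,dx$, which is integrable since $G^2\sim(\log|x|)^2$ is $L^1_{loc}$) vanish as $r\to 0$, and confirm that the pure-singular Pohozaev term contributes exactly $\tfrac1{2\pi}$, whence $\mu_0(0)^2\cdot\tfrac1{2\pi}=4\mu_0(0)$. A secondary technical point is justifying the passage to the limit uniformly: one needs $u_\lambda\to\mu_0(0)G(\cdot,0)$ in $C^1(\partial B_r)$, which is exactly the content of Lemma \ref{8th}, and one should choose $r$ small enough that $B_r\Subset B_1$ so that $\tfrac1{1-|x|^2}$ and its derivatives stay bounded; no uniformity in $r$ is needed since the final algebraic identity $4\mu_0(0)=\tfrac12\mu_0(0)^2$ holds for each admissible $r$.
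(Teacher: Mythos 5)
Your proposal follows essentially the same route as the paper: pass to the limit $\lambda\to 0$ in the local Pohozaev identity of Lemma \ref{Poho1} using the $C^1_{loc}$ convergence $u_\lambda\to\mu_0(0)G(\cdot,0)$ away from the origin, then let $r\to 0$ so that the Hardy terms and the regular part of $G$ vanish and only the logarithmic singularity survives in the boundary gradient term, giving $4\mu_0(0)=\tfrac{1}{2\pi}\mu_0(0)^2$ and hence $\mu_0(0)=8\pi$, exactly as in the paper's double-limit argument. The only correction needed is the slip where you assert the bracket $\mathcal{I}(r)$ equals $\tfrac12$ (which would yield $\mu_0(0)=8$): as your own subsequent computation of the singular Pohozaev term shows, its value is $\tfrac{1}{2\pi}$, which is what produces $8\pi$.
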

\medskip

\begin{proof}

Since $u_\lambda$ converges strongly to $\mu_0(0)G(x,0)$ in $C_{loc}^1(B_r\backslash \{0\})$
as $\lambda\rightarrow 0$, we derive that
\[ \medskip
    \lim\limits_{r\rightarrow 0}  \lim\limits_{\lambda\rightarrow 0} \int_{\partial B_r}F(u_\lambda)\frac{{\partial (|x|^2)}}{{\partial n}}dS = 0,\,\, \ \ \
 \lim\limits_{r\rightarrow 0}  \lim\limits_{\lambda\rightarrow 0} \int_{\partial B_r}\left| {\nabla u_\lambda} \right|^2 \langle
 x,\nu\rangle  dS=  \frac{1}{2\pi}\mu_0^2(0),
\]
and
\[\medskip
 \lim\limits_{r\rightarrow 0}  \lim\limits_{\lambda\rightarrow 0} \int_{\partial B_r}u_\lambda^2 \frac{\partial (\frac{1}{1-|x |^2})}{\partial n}dS =0,\,\,\ \ \ \ \ \ \ \ \ \ \ \ \
 \lim\limits_{r\rightarrow 0}  \lim\limits_{\lambda\rightarrow 0} \int_{B_r}u_\lambda^2
\ \Delta (\frac{1}{1-|x|^2}) dx=0 .
\]
\[\]
Combining above estimates, one can obtain

\[  \lim\limits_{r\rightarrow 0}  \lim\limits_{\lambda\rightarrow 0} 4\int_{B_r}  {F(u_\lambda)dx}  = \frac{1}{2\pi}\mu_0^2(0) .\]
\[\]
This together with
\[  \lim\limits_{r\rightarrow 0}  \lim\limits_{\lambda\rightarrow 0}  \int_{B_r}  {F(u_\lambda)dx}=  \lim\limits_{r\rightarrow 0}  \lim\limits_{\lambda\rightarrow 0}   \int_{B_r}   {\lambda e^{u_\lambda}dx}=\mu_0(0),  \]
\[\]
yields that
\[\mu_0(0)=8\pi.\]
\end{proof}
\medskip

\section{the asymptotic behavior of $u_\lambda$ around the origin}\label{s6}
In this section, we will analyze the asymptotic behavior of the solution $u_\lambda$  around the blow-up point.
Assuming that $c_\lambda:=u_\lambda(0)=\max\limits_{B_1}u_\lambda(x)$
 and setting $\lambda r_\lambda^{2}e^{c_\lambda}=8$,  we first prove that
 \begin{lemma}
 \label{r=0}
    $r_\lambda^{2}\rightarrow 0$ as  $\lambda\rightarrow 0$.
 \end{lemma}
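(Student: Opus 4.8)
The plan is to argue by contradiction: suppose along some sequence $\lambda\to 0$ we have $r_\lambda^2\not\to 0$, hence (passing to a subsequence) $r_\lambda\geq\delta_0>0$ for all such $\lambda$. Recall $c_\lambda=u_\lambda(0)=\max_{B_1}u_\lambda\to+\infty$ (this was established in Step 1 of the previous subsection, since $0$ is a blow-up point). The defining relation $\lambda r_\lambda^2 e^{c_\lambda}=8$ then forces $\lambda e^{c_\lambda}=8/r_\lambda^2\leq 8/\delta_0^2$, i.e. $\lambda e^{c_\lambda}$ stays bounded; but also $r_\lambda\leq 1$ (we work inside $B_1$) gives $\lambda e^{c_\lambda}=8/r_\lambda^2\geq 8$, so $\lambda e^{c_\lambda}$ is bounded below away from $0$ as well.

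Next I would turn this into a contradiction with the mass concentration. From Lemma~\ref{8th} and Lemma~\ref{9th} we know $\int_{B_1}\lambda e^{u_\lambda}dx\to 8\pi$ with all the mass concentrating at the origin, i.e. for every fixed small $r$, $\int_{B_r}\lambda e^{u_\lambda}dx\to 8\pi$ and $\int_{B_1\setminus B_r}\lambda e^{u_\lambda}dx\to 0$. On the other hand, using the rearrangement/level-set estimate from the proof of Lemma~\ref{8th} (the bound $|\{x\in B_1:u_\lambda\geq t\}|\leq\pi e^{-2Ct}$, or directly the uniform $L^p$ bound on $u_\lambda$ and Lemma~\ref{B-M}), together with $u_\lambda(0)=c_\lambda$ being the maximum, one controls $\int_{B_r}\lambda e^{u_\lambda}dx$ in terms of $\lambda e^{c_\lambda}$ and $r$: crudely $\int_{B_r}\lambda e^{u_\lambda}dx\leq \pi r^2\,\lambda e^{c_\lambda}$, but more usefully the decay of level sets gives $\int_{B_r}\lambda e^{u_\lambda}dx\leq C\,r^{2}\,(\lambda e^{c_\lambda})$ only when $\lambda e^{c_\lambda}r^2$ is not small; the key point is that concentration of a mass $8\pi$ into balls $B_r$ with $r\to 0$ is incompatible with $\lambda e^{c_\lambda}$ staying \emph{bounded}. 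Indeed if $\lambda e^{c_\lambda}\leq M$ then $\int_{B_r}\lambda e^{u_\lambda}dx\leq M\pi r^2\to 0$ as $r\to 0$ uniformly in $\lambda$, contradicting $\int_{B_r}\lambda e^{u_\lambda}dx\to 8\pi$. Thus $\lambda e^{c_\lambda}\to+\infty$, which by $r_\lambda^2=8/(\lambda e^{c_\lambda})$ gives exactly $r_\lambda^2\to 0$.

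The main obstacle is making the estimate $\int_{B_r}\lambda e^{u_\lambda}dx\lesssim (\lambda e^{c_\lambda})\,r^2$ rigorous, i.e. controlling the $L^1$ norm of $\lambda e^{u_\lambda}$ on a small ball by the sup value $\lambda e^{c_\lambda}$ times the volume $\pi r^2$ — this is immediate since $u_\lambda\leq c_\lambda$ pointwise and $u_\lambda$ is radially decreasing (Proposition~\ref{pro1}), so in fact $\int_{B_r}\lambda e^{u_\lambda}dx\leq\pi r^2\lambda e^{c_\lambda}$ with no further work. Hence the argument is short: $\lambda e^{c_\lambda}$ bounded would give the mass in $B_r$ tending to $0$ as $r\to 0$, contradicting concentration of mass $8\pi$; therefore $\lambda e^{c_\lambda}\to\infty$ and $r_\lambda^2=8/(\lambda e^{c_\lambda})\to 0$. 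One should also note that $r_\lambda$ is well-defined and lies in $(0,1)$ for $\lambda$ small, which follows because $\lambda e^{c_\lambda}\geq \lambda\int_{B_1}e^{u_\lambda}dx/|B_1|\geq C_1/(\pi\lambda)\to\infty$ is not quite what we want, but the definition $\lambda r_\lambda^2 e^{c_\lambda}=8$ together with $\lambda e^{c_\lambda}\to\infty$ (just proved) automatically yields $r_\lambda\in(0,1)$ eventually, closing the argument.
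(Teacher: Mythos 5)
Your proof is correct, but it takes a different route from the paper's. You turn the assumption $r_\lambda^2\not\to 0$ into boundedness of $\lambda e^{c_\lambda}$ and then contradict the concentration of mass at the origin via the trivial pointwise bound $\lambda e^{u_\lambda}\le \lambda e^{c_\lambda}$, which forces $\int_{B_r}\lambda e^{u_\lambda}dx\le \pi r^2\lambda e^{c_\lambda}=O(r^2)$ uniformly in $\lambda$, incompatible with $\int_{B_r}\lambda e^{u_\lambda}dx\to 8\pi$ (in fact the weaker facts already available before this lemma suffice: the total mass is $\ge C_1$ and the mass outside $B_r$ vanishes because $u_\lambda$ is uniformly bounded away from the origin, so no circularity arises since Lemmas \ref{8th} and \ref{9th} precede Lemma \ref{r=0}). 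The paper instead argues that boundedness of $\lambda e^{c_\lambda}$ makes the right-hand side $f_\lambda=\lambda e^{u_\lambda}$ bounded in $L^\infty$, and then invokes the decomposition of Proposition 1 in \cite{Wang-Ye} together with the Hardy--Trudinger--Moser inequality and standard $W^{2,2}$ elliptic estimates to conclude that $u_\lambda(0)$ stays bounded, contradicting $c_\lambda\to+\infty$; this contradicts blow-up directly and is self-contained at the level of regularity theory, whereas your argument is more elementary and bypasses the regularity machinery near the critical Hardy potential, at the price of leaning on the mass-quantization results of Section 3. Two small remarks: your claim $r_\lambda\le 1$ (hence $\lambda e^{c_\lambda}\ge 8$) is not justified, but it is also not needed; likewise the digression about level-set decay is superfluous, since, as you note, $u_\lambda\le c_\lambda$ already gives the required bound.
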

\begin{proof}
    We argue this by contradiction. If $\lim\limits_{\lambda\rightarrow 0}r_\lambda^{2}>0$, one can easily get that $\lambda e^{c_\lambda}$ is bounded. Hence the  solution $u_\lambda$ satisfies that

\begin{equation}\begin{cases}
-\Delta u_{\lambda}-\frac{u_\lambda}{(1-|x|^2)^2}=\lambda e^{u_\lambda}:=f_{\lambda},\\
\ \ \ \ \ \ \ \ \ \ \    \   u_\lambda\in \mathcal{H},
\end{cases}\end{equation}
\[\]
where $f_{\lambda}\in L^{\infty}(B_1)$. From Proposition 1 of \cite{Wang-Ye}, we deduce that $u_\lambda$ can be written as
$u_\lambda=u^{1}_{\lambda}+u^{2}_{\lambda}$ with

$$\Big(\int_{B_1}\big(|\nabla u_{\lambda}^2|^2-\frac{u_\lambda^2}{(1-|x|^2)^2}\big)dx\Big)^{\frac{1}{2}}+\big(\int_{B_{\frac{1}{4}}}|\nabla u_\lambda^1|^pdx\big)^{\frac{1}{p}}\leq C_p,$$
\[\]
where $u_{\lambda}^{1}\in W^{1,p}_{0}(B_{\frac{1}{4}})$ and $u_{\lambda}^{2}\in \mathcal{H}$ for any $p\geq 1$. This together with Hardy-Trudinger-Moser inequality gives that
$$\frac{u_\lambda}{(1-|x|^2)^2}+f_{\lambda}\in L^{2}(B_{\frac{1}{4}}).$$
\[\]Then it follows from the standard elliptic estimate that
$u_\lambda \in W^{2,2}(B_{\frac{1}{8}})$, which yields that $u_{\lambda}(0)$ is bounded. This arrives at a contradiction with the assumption $\lim\limits_{\lambda\rightarrow 0}c_\lambda=+\infty$. Thus, we obtain $r_\lambda^2\rightarrow 0$.
\end{proof}
\medskip

\medskip

Letting $v_{\lambda}(x):=r^{2}_{\lambda}u_{\lambda}(x),$
then $v_{\lambda}(x)$ satisfies

\begin{equation*}
\begin{cases}
-\Delta v_{\lambda}-\frac{v_{\lambda}}{(1-|x|^{2})^{2}}
=\lambda r_{\lambda}^{2} e^{u_{\lambda}(x)}:=g_{\lambda}(x)\ \ \ \  \ ~\mbox{in}\ \   ~B_{1},  \\[1mm]
\ \ \ \ \ \ \ \ \ \ \ \ \ \ \ \ \ \ \ \ \  \ \ \ \   \ v_{\lambda}(x)=0  \ \ \ \  \  \ \     \ \  \ \ \ \ \ \   ~\mbox{on}~\partial B_{1},
\end{cases}
\end{equation*}
\[\]
where $g_{\lambda}(x)\in L^{\infty}(B_{1})$.
By Lemma \ref{r=0} we derive that

$$
\lim_{\lambda\rightarrow 0}g_{\lambda}(x)=
\begin{cases}
0,\,\,x\neq 0\\[1mm]
8,\,\,x=0.
\end{cases}
$$
\[\]
Then this deduces the following lemma.

\begin{lemma}\label{lem-10-7-3}
There holds that $r_{\lambda}^{2}c_{\lambda}\rightarrow 0$
as $\lambda\rightarrow 0.$
\end{lemma}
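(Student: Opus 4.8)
\textbf{Proof proposal for Lemma \ref{lem-10-7-3}.}

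The plan is to extract decay information on $v_\lambda$ away from the origin and then feed it back into the pointwise value $v_\lambda(0) = r_\lambda^2 c_\lambda$. Recall that $v_\lambda = r_\lambda^2 u_\lambda$ solves $-\Delta v_\lambda - \frac{v_\lambda}{(1-|x|^2)^2} = g_\lambda$ in $B_1$ with $v_\lambda = 0$ on $\partial B_1$, where $g_\lambda \in L^\infty(B_1)$ with $\|g_\lambda\|_{L^\infty} \le C$ (since $\lambda r_\lambda^2 e^{c_\lambda} = 8$ and $u_\lambda$ attains its max at $0$, so $g_\lambda = \lambda r_\lambda^2 e^{u_\lambda} \le 8$ everywhere) and $g_\lambda \to 8\,\delta$-type concentration pointwise as computed just above. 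First I would write the Green representation $v_\lambda(x) = \int_{B_1} G(x,y)\, g_\lambda(y)\, dy$ using the Green's function $G$ of $-\Delta - \frac{1}{(1-|x|^2)^2}$ from \eqref{Green fcn}. The key is that $g_\lambda$ has uniformly bounded $L^1$ mass: $\int_{B_1} g_\lambda\,dx = \lambda r_\lambda^2 \int_{B_1} e^{u_\lambda}\,dx \le C r_\lambda^2 \to 0$ by Lemma \ref{r=0} together with the bound $\int_{B_1}\lambda e^{u_\lambda}\,dx \le C_2$ from \eqref{quan}.

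Given this, I would estimate $v_\lambda(0) = \int_{B_1} G(0,y)\, g_\lambda(y)\, dy$ directly. Split the integral into $B_{r_\lambda^{1/2}}$ (say) and its complement. On the complement, $G(0,y)$ is bounded and $\int_{B_1 \setminus B_{r_\lambda^{1/2}}} g_\lambda\,dy \le C r_\lambda^2 \to 0$, so that piece vanishes. On the small ball, use the bound \eqref{green estimate1}, namely $G(0,y) \le -\frac{1}{2\pi}\ln|y| + C$, so that
\begin{equation*}
\int_{B_{r_\lambda^{1/2}}} G(0,y)\, g_\lambda(y)\, dy \le C \int_{B_{r_\lambda^{1/2}}} \bigl(-\ln|y| + C\bigr)\, g_\lambda(y)\, dy.
\end{equation*}
Since $g_\lambda \le 8$ pointwise and $\int_{B_{r_\lambda^{1/2}}}(-\ln|y|)\,dy = O(r_\lambda \ln(1/r_\lambda)) \to 0$ using $r_\lambda^2 \to 0$, this piece also tends to $0$. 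Combining the two, $v_\lambda(0) = r_\lambda^2 c_\lambda \to 0$. An alternative route that avoids the $L^\infty$ bound on $g_\lambda$ entirely is to use only the mass bound: since $G(0,\cdot) \in L^p(B_1)$ for every $p < \infty$ (logarithmic singularity), Hölder against $\|g_\lambda\|_{L^1} \to 0$ combined with the uniform $L^{p'}$-bound on $g_\lambda$ near the origin — but the cleanest is the direct split above, since $g_\lambda$ really is uniformly bounded.

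I do not expect a serious obstacle here; the statement is essentially a consequence of $r_\lambda^2 \to 0$ plus the uniform $L^1$ mass bound $\int_{B_1} g_\lambda \le C r_\lambda^2$, which together kill the Green potential at the origin despite the logarithmic kernel. The one point requiring a little care is justifying that the Hardy singular term $\frac{v_\lambda}{(1-|x|^2)^2}$ does not obstruct the Green representation — but this is already handled by working in the Poincaré disk picture $-\Delta_{\mathbb{B}^2} - \frac14$ as in \eqref{3-7} and Proposition \ref{pro1}, where the Green's function is explicit and positive, so $v_\lambda(x) = \int_{\mathbb{B}^2} G(x,y)\, g_\lambda(y)\, \frac{(1-|y|^2)^2}{4}\, dV_{\mathbb{B}^2}$ and the same splitting argument applies verbatim. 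Thus the main work is bookkeeping the two elementary integral estimates and invoking Lemma \ref{r=0}.
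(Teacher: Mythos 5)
Your proposal is correct and follows essentially the same route as the paper's proof: the Green representation for $v_\lambda$, the logarithmic bound \eqref{green estimate1} on the Green's function, a splitting of the integral, and the two facts $g_\lambda\le 8$ pointwise and $\int_{B_1}g_\lambda\,dx\le C r_\lambda^2\to 0$; the paper merely splits at a fixed radius $\epsilon$ (then lets $\epsilon\to 0$) instead of at $r_\lambda^{1/2}$. One small imprecision: on $B_1\setminus B_{r_\lambda^{1/2}}$ the kernel $G(0,y)$ is bounded only by $C\left|\log r_\lambda\right|$ rather than by a constant, since the inner radius shrinks, but as $r_\lambda^2\left|\log r_\lambda\right|\to 0$ your estimate goes through unchanged.
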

\begin{proof}

By the Green's representation theorem and \eqref{green estimate1}, we have
\begin{equation}\nonumber
\begin{aligned}
v_{\lambda}(x)&=\int_{B_{1}}G(x,y)g_{\lambda}(y)dy\\
&\leq \int_{B_{1}}\Big(-\frac{1}{2\pi}\log|x-y|+C\Big)g_{\lambda}(y)dy,
\end{aligned}
\end{equation}
\[\]
which implies that
\begin{equation}\nonumber
\begin{aligned}
\lim_{\lambda\rightarrow 0}v_{\lambda}(0)
&\leq \lim_{\lambda\rightarrow 0}\int_{B_{1}}\Big(-\frac{1}{2\pi}\log|y|+C\Big)g_{\lambda}(y)dy\\
&=-\lim_{\lambda\rightarrow 0}\lim_{\epsilon\rightarrow 0}\Big(\int_{B_{\epsilon}}+\int_{B_{1}\setminus B_{\epsilon} }\Big)\frac{1}{2\pi}\log|y|g_{\lambda}(y)dy
+C\lim_{\lambda\rightarrow 0}\int_{B_{1}}g_{\lambda}(y)dy\\
&=0.
\end{aligned}
\end{equation}

\end{proof}
Next, setting $\eta_{\lambda}(x):=u_{\lambda}(r_{\lambda}x)-c_{\lambda}$, then $\eta_{\lambda}(x)$ satisfies

\begin{equation}\label{10-7-3}
-\Delta (\eta_{\lambda}(x))=
r^{2}_{\lambda}\frac{\eta_{\lambda}(x)+c_{\lambda}}{(1-|r_{\lambda}x|^{2})^{2}}
+8e^{\eta_{\lambda}(x)},\ \ x\in B_{\frac{1}{r_\lambda}}.
\end{equation}
 Obviously, we conclude the following lemma.

\medskip

\begin{lemma}\label{lem-10-7-1}
 There holds that
\begin{equation}\label{10-7-1}
\eta_{\lambda}(x)
\rightarrow \eta_{0}(x):=-2\log(1+|x|^{2})\,\,\,\text{in}\,\,C^{1}_{loc}(\R^{2})
\end{equation}
and
\begin{equation}\label{10-7-2}
\lim_{R\rightarrow \infty}\lim_{\lambda\rightarrow 0}
\int_{B_{R r_{\lambda}}}\lambda e^{u_{\lambda}}dx
=\int_{\R^{2}}8e^{\eta_{0}}dx=8\pi.
\end{equation}
\end{lemma}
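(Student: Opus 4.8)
The plan is a standard blow-up (bubbling) argument combined with the Liouville-type classification of entire finite-mass solutions of $-\Delta w = 8 e^{w}$ on $\mathbb{R}^2$. Throughout, recall that $c_\lambda = \max_{B_1} u_\lambda$ gives $\eta_\lambda \le 0$ and $\eta_\lambda(0) = 0$, and that the right-hand side $f_\lambda$ of \eqref{10-7-3} is nonnegative since $\eta_\lambda + c_\lambda = u_\lambda(r_\lambda x) \ge 0$.

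\textbf{Step 1 (uniform local bounds).} First I would show $\eta_\lambda$ is bounded in $C^{1,\alpha}_{loc}(\mathbb{R}^2)$ uniformly in $\lambda$. On a fixed ball $B_{2R}$, for $\lambda$ small enough that $R r_\lambda < \tfrac12$ one has $(1-|r_\lambda x|)^2 \ge \tfrac14$, hence
\[
0 \le r_\lambda^2 \frac{\eta_\lambda + c_\lambda}{(1-|r_\lambda x|)^2} \le 4 r_\lambda^2 c_\lambda \to 0
\]
by Lemma \ref{lem-10-7-3}, while $0 < 8 e^{\eta_\lambda} \le 8$. Decompose $\eta_\lambda = w_\lambda + h_\lambda$ on $B_{2R}$, where $-\Delta w_\lambda = f_\lambda$ with $w_\lambda = 0$ on $\partial B_{2R}$ and $h_\lambda$ is harmonic with boundary data $\eta_\lambda|_{\partial B_{2R}} \le 0$. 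The maximum principle gives $0 \le w_\lambda \le C(R)\|f_\lambda\|_{L^\infty(B_{2R})} \le C(R)$ and $h_\lambda = \eta_\lambda - w_\lambda \le 0$. Applying the Harnack inequality to the nonnegative harmonic function $-h_\lambda$ yields $\sup_{B_R}(-h_\lambda) \le C(R)(-h_\lambda(0)) = C(R)\, w_\lambda(0) \le C(R)$, so $-C(R) \le \eta_\lambda \le 0$ on $B_R$. Then $f_\lambda$ is bounded in $L^\infty(B_R)$ uniformly in $\lambda$, and elliptic regularity gives uniform $W^{2,p}(B_{R/2})$, hence $C^{1,\alpha}(B_{R/2})$, bounds for every $p$ and $R$.

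\textbf{Step 2 (passing to the limit and finite mass).} By Arzel\`a--Ascoli and a diagonal argument, along a subsequence $\eta_\lambda \to \eta_0$ in $C^1_{loc}(\mathbb{R}^2)$ with $\eta_0(0) = 0$, $\eta_0 \le 0$. The Hardy term tends to $0$ locally uniformly by the estimate above and $8 e^{\eta_\lambda} \to 8 e^{\eta_0}$ locally uniformly, so $\eta_0$ solves $-\Delta \eta_0 = 8 e^{\eta_0}$ weakly, hence classically, on $\mathbb{R}^2$. Since $8 e^{-c_\lambda} = \lambda r_\lambda^2$ we have $8 e^{\eta_\lambda(x)} = \lambda r_\lambda^2 e^{u_\lambda(r_\lambda x)}$, so by $y = r_\lambda x$,
\[
\int_{B_R} 8 e^{\eta_\lambda}\,dx = \int_{B_{R r_\lambda}} \lambda e^{u_\lambda}\,dy \le \int_{B_1}\lambda e^{u_\lambda}\,dy \le C_2
\]
for every $R$ and $\lambda$ by \eqref{quan}; letting $\lambda\to0$ then $R\to\infty$ gives $\int_{\mathbb{R}^2} 8 e^{\eta_0}\,dx \le C_2 < \infty$.

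\textbf{Step 3 (classification and conclusion).} By the classical Liouville classification of finite-mass solutions of $-\Delta\eta_0 = 8e^{\eta_0}$, one has $\eta_0(x) = \ln \frac{\mu^2}{(1+\mu^2|x-x_0|^2)^2}$ for some $\mu>0$, $x_0\in\mathbb{R}^2$; since $\eta_0 \le 0 = \eta_0(0)$, the origin is the unique maximum point, forcing $x_0 = 0$ and then $\ln\mu^2 = \eta_0(0) = 0$, i.e. $\mu = 1$. Thus $\eta_0(x) = -2\ln(1+|x|^2)$, and since this limit is independent of the subsequence the full family converges, proving \eqref{10-7-1}. A direct computation gives $\int_{\mathbb{R}^2} 8 e^{\eta_0}\,dx = \int_{\mathbb{R}^2}\frac{8}{(1+|x|^2)^2}\,dx = 8\pi$, and combining $\int_{B_{R r_\lambda}}\lambda e^{u_\lambda}\,dy = \int_{B_R}8 e^{\eta_\lambda}\,dx \to \int_{B_R}8 e^{\eta_0}\,dx \to 8\pi$ as $\lambda\to0$ then $R\to\infty$ establishes \eqref{10-7-2}. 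The one genuinely delicate point is the uniform lower bound on $\eta_\lambda$ in Step 1: the upper bound is free, but the lower bound needs the Harnack inequality on the harmonic part anchored by $\eta_\lambda(0)=0$, and this is exactly where Lemma \ref{lem-10-7-3} is indispensable for taming the critical Hardy term $r_\lambda^2(\eta_\lambda+c_\lambda)(1-|r_\lambda x|)^{-2}$; the rest is routine.
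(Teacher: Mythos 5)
Your proposal is correct, and its first two steps (the sign and Harnack-based local $C^{1,\alpha}$ bounds, Arzel\`a--Ascoli, and the change of variables $y=r_\lambda x$ for the mass) are a fleshed-out version of the paper's terse argument, which likewise notes $\eta_\lambda\le 0$, $\eta_\lambda(0)=0$, the local boundedness of $\Delta\eta_\lambda$, the vanishing of the Hardy term via Lemma \ref{lem-10-7-3}, and then appeals to the Harnack inequality for compactness. Where you genuinely diverge is in identifying the limit: the paper treats \eqref{10-7-4} as a radial Cauchy problem, using the radial symmetry of $u_\lambda$ from Proposition \ref{pro1} (so $\eta_0(0)=0$, $\eta_0'(0)=0$) and ODE uniqueness to conclude $\eta_0=-2\ln(1+|x|^2)$; you instead invoke the Chen--Li classification of finite-mass solutions of $-\Delta w=8e^{w}$, with the finite mass supplied by the constraint $\int_{B_1}\lambda e^{u_\lambda}\,dx\le C_2$ in \eqref{quan} and the parameters pinned down by $\eta_0\le 0=\eta_0(0)$. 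Your route buys independence from radial symmetry (and in fact repairs a small imprecision in the paper, since \eqref{10-7-4} as a PDE with only the condition $\eta_0(0)=0$ is not uniquely solvable outside the radial or finite-mass class), at the cost of importing the Liouville classification theorem; the paper's route is more elementary once Proposition \ref{pro1} is in hand. The final passage to \eqref{10-7-2} via $\int_{B_{Rr_\lambda}}\lambda e^{u_\lambda}\,dy=\int_{B_R}8e^{\eta_\lambda}\,dx$ and the iterated limit is the same in both arguments.
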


\begin{proof}
Observing the definition of $\eta_\lambda(x)$, we can easily find
that $\eta_{\lambda}(x)\leq 0,\Delta \eta_{\lambda}$
is locally bounded and $\eta_{\lambda}(0)=0$.
From Lemma \ref{lem-10-7-3}, we have known that $r^{2}_{\lambda}\frac{\eta_{\lambda}(x)+c_{\lambda}}{(1-|r_{\lambda}x|^{2})^{2}}\rightarrow 0$.
Then it follows from Harnack inequality that $\eta_{\lambda}(x)\rightarrow \eta_0$
in $C^{1}_{loc}(\R^{2})$, where

\begin{equation}\label{10-7-4}
-\Delta \eta_{0}=8e^{\eta_{0}}\,\,\,\text{in}\,\,\R^{2},\,\,\,\eta_{0}(0)=0.
\end{equation}
Hence, by the uniqueness of solutions to the Cauchy problem \eqref{10-7-4}, we obtain that $\eta_{0}=-2\log(1+|x|^{2})$. Consequently,
\begin{equation*}
   \lim_{R\rightarrow \infty}\lim_{\lambda\rightarrow 0}
\int_{B_{R r_{\lambda}}}\lambda e^{u_{\lambda}}dx
=8\int_{\R^{2}}e^{\eta_{0}}dx=8\pi.
 \end{equation*}
\end{proof}

\medskip

\section{The uniqueness of solutions}\label{s7}
Since it follows from Proposition \ref{pro1} that $u_{\lambda}(y)$ is radial, in order to prove Theorem \ref{unique},
by Cauchy-initial uniqueness for ODE,
we only need to prove
the following result.
\begin{proposition}\label{thuniq}
	Let $u_\lambda^{(1)}$ and $u_\lambda^{(2)}$ be two solutions to problem \eqref{quan}
	which concentrate at the same point $0.$
	Then  there exists  $\lambda_0>0$ such that

	\[u_\lambda^{(1)}(x)\equiv u_\lambda^{(2)}(x),   ~\mbox{for any}~~ \lambda\in (0,\lambda_0),   x\in B_{\delta r_{\lambda}}(0),\]
\[\]
	where $\delta>0$ is a small fixed constant.
\end{proposition}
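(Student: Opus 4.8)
The plan is to argue by contradiction: suppose $u_\lambda^{(1)} \not\equiv u_\lambda^{(2)}$ for a sequence $\lambda \to 0$, set $\xi_\lambda := u_\lambda^{(1)} - u_\lambda^{(2)}$, and normalize $\zeta_\lambda := \xi_\lambda / \|\xi_\lambda\|_{L^\infty(B_{\delta r_\lambda})}$ so that $\|\zeta_\lambda\|_{L^\infty(B_{\delta r_\lambda})} = 1$. Since both solutions are radial and concentrate at $0$ with $\lambda (r_\lambda^{(i)})^2 e^{c_\lambda^{(i)}} = 8$, I would first establish that $r_\lambda^{(1)}/r_\lambda^{(2)} \to 1$; this is where the promised decay estimate for $u_\lambda(r_\lambda x) - c_\lambda$ on $B_{1/r_\lambda} \setminus B_{2R_\epsilon}$ and the accurate relation between $\lambda$ and $c_\lambda$ enter — they let one compare the two blow-up profiles on the overlap region and conclude the scales are asymptotically equal. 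Then, rescaling $\hat\zeta_\lambda(x) := \zeta_\lambda(r_\lambda x)$ on $B_{\delta/r_\lambda}$, the function $\hat\zeta_\lambda$ solves a linearized equation whose leading term, using Lemma \ref{lem-10-7-1} (i.e. $\eta_\lambda \to \eta_0 = -2\ln(1+|x|^2)$ in $C^1_{loc}$) and Lemma \ref{lem-10-7-3} (the Hardy term scales away, $r_\lambda^2 (\eta_\lambda + c_\lambda)/(1-|r_\lambda x|)^2 \to 0$), is
\[
-\Delta w = \frac{8}{(1+|x|^2)^2}\, w \quad \text{in } \mathbb{R}^2,
\]
after passing to a local limit $\hat\zeta_\lambda \to w$ in $C^1_{loc}(\mathbb{R}^2)$ by elliptic estimates.

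Next I would invoke the classification of bounded solutions of this linearized Liouville equation: the only bounded radial solution is a multiple of $\frac{1-|x|^2}{1+|x|^2}$, and the full bounded kernel is spanned by that radial mode together with the two translation modes $\frac{x_i}{1+|x|^2}$. Since $\zeta_\lambda$ is radial (both $u_\lambda^{(i)}$ are radial by Proposition \ref{pro1}), the limit $w$ is radial, hence $w = b\, \frac{1-|x|^2}{1+|x|^2}$ for some constant $b$. A separate normalization argument — typically examining $\zeta_\lambda$ (or $\hat\zeta_\lambda$) at the origin versus the boundary layer, or using that $u_\lambda^{(1)}(0)$ and $u_\lambda^{(2)}(0)$ are tied together through the constraint $\lambda \int_{B_1} e^{u_\lambda}\,dx \to 8\pi$ combined with the $\lambda$–$c_\lambda$ relation — forces $b = 0$, so $w \equiv 0$ and $\hat\zeta_\lambda \to 0$ in $C^1_{loc}$. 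This already says the $L^\infty$ norm $1$ is not attained in any fixed compact set of the rescaled variable, so it must be attained near the "edge" $|x| \sim \delta/r_\lambda$, i.e. in the region where $|y| \sim \delta r_\lambda$ in the original variable.

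To close the argument I would apply a \emph{local Pohozaev identity} for $\zeta_\lambda$ (obtained by subtracting the Pohozaev identities of Lemma \ref{Poho1} for $u_\lambda^{(1)}$ and $u_\lambda^{(2)}$, or directly by testing the linearized equation against $x \cdot \nabla \zeta_\lambda$) on the ball $B_{\delta r_\lambda}$. The key point — and the reason the identity is taken on $B_{\delta r_\lambda}$ rather than $B_1$, as the remark before the proposition stresses — is that the boundary terms on $\partial B_{\delta r_\lambda}$ can be estimated using the decay of $u_\lambda(r_\lambda x) - c_\lambda$ and of $\zeta_\lambda$ there, while the bulk term carries the Hardy singularity in a controllable way after rescaling. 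Evaluating both sides as $\lambda \to 0$ produces an identity of the form $(\text{nonzero constant}) \cdot \lim \|\xi_\lambda\|_{\text{boundary}} = o(1) \cdot \|\xi_\lambda\|_{L^\infty}$, or more precisely a relation that is incompatible with $\|\zeta_\lambda\|_{L^\infty(B_{\delta r_\lambda})} = 1$ once the interior part is known to vanish; this contradiction forces $\xi_\lambda \equiv 0$ on $B_{\delta r_\lambda}$, and then Cauchy uniqueness for the radial ODE extends this to all of $B_1$.

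\textbf{Main obstacle.} The delicate step is the uniform control near the edge $|y| \sim \delta r_\lambda$: proving that $\|\zeta_\lambda\|_{L^\infty}$ cannot concentrate there requires the sharp decay estimate for $u_\lambda(r_\lambda x) - c_\lambda$ on the annular region $B_{1/r_\lambda}\setminus B_{2R_\epsilon}$ together with the precise $\lambda$–$c_\lambda$ asymptotics, and feeding these into the boundary terms of the Pohozaev identity so that they are genuinely of lower order. Managing the Hardy term $\frac{1}{(1-|x|^2)^2}$ through all these rescalings — verifying it is truly negligible on $B_{\delta r_\lambda}$ after multiplying by $r_\lambda^2$ — is the recurring technical burden, and it is precisely what distinguishes this argument from the power-nonlinearity and classical mean-field cases in \cite{LPP-2022, CCL}.
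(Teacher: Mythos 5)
Your overall skeleton (normalize the difference, rescale by $r_\lambda^{(1)}$, pass to the linearized Liouville equation, use radiality plus the kernel classification of Lemma \ref{lem3.1}, and use Lemma \ref{add-lem1} to get $r^{(1)}_\lambda/r^{(2)}_\lambda\to 1$) matches the paper, but the proposal has a genuine gap exactly at the crux: you never actually prove that the radial-mode coefficient $b$ vanishes. You wave at "a separate normalization argument — examining $\zeta_\lambda$ at the origin versus the boundary layer, or using $\lambda\int_{B_1}e^{u_\lambda}\to 8\pi$ together with the $\lambda$--$c_\lambda$ relation", but none of these is a proof. In particular, the natural attempt $\hat\zeta_\lambda(0)=\bigl(c^{(1)}_\lambda-c^{(2)}_\lambda\bigr)/\|u^{(1)}_\lambda-u^{(2)}_\lambda\|_{L^\infty(B_{\delta r_\lambda})}$ does not go to zero just because $c^{(1)}_\lambda-c^{(2)}_\lambda=o_\lambda(1)$ (from \eqref{lambda_gamma}), since the denominator may itself tend to zero; and the mass constraint gives no pointwise information on the normalized difference. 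In the paper this step is Proposition \ref{prop-A}: the local Pohozaev quadratic form $P^{(1)}\bigl(u^{(1)}_\lambda+u^{(2)}_\lambda,\xi_\lambda\bigr)$ of \eqref{p1uv} is evaluated on $\partial B_{\delta r^{(1)}_\lambda}$, both sides are computed in the limit using Lemmas \ref{lem-10-7-2} and \ref{theta-D}, and the nondegeneracy $\widetilde A_1+\delta\widetilde A_2-32\widetilde A_3\neq 0$ forces $\widetilde\alpha_0=0$. So the Pohozaev identity is not an auxiliary tool for an "edge" region — it is the mechanism that kills the radial kernel mode, and without it (or a substitute) your argument does not close.

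The second problem is a scale misreading that creates the spurious "edge" issue you then try to fix with the Pohozaev identity. The proposition is stated on $B_{\delta r_\lambda}$, so after the rescaling $\widetilde\xi_\lambda(x)=\xi_\lambda(r^{(1)}_\lambda x)$ the norm $\|\xi_\lambda\|_{L^\infty(B_{\delta r_\lambda})}=1$ lives on the \emph{fixed} ball $B_\delta$ of the rescaled variable, not on $B_{\delta/r_\lambda}$; once $\widetilde\xi_\lambda\to 0$ in $C^1_{loc}(\mathbb{R}^2)$ (Propositions \ref{prop_tildeeta} and \ref{prop-A}), the contradiction with $\|\widetilde\xi_\lambda\|_{L^\infty(B_\delta)}=1$ is immediate, and no control near $|x|\sim\delta/r_\lambda$ is needed. (Your bookkeeping is also off there: $|x|\sim\delta/r_\lambda$ corresponds to $|y|\sim\delta$, not $|y|\sim\delta r_\lambda$.) The decay estimate of Lemma \ref{lem2.4} and the $\lambda$--$c_\lambda$ expansion are indeed used, but for Lemma \ref{theta12} and for computing the Pohozaev limits, not for ruling out boundary-layer concentration of $\xi_\lambda$.
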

\medskip

To prove Proposition \ref{thuniq}, we mainly use a Pohozaev identity of the solution $u_{\lambda}$ from scaling, blow-up analysis and
a contradiction argument.

Let $u_\lambda^{(1)}$ and $u_\lambda^{(2)}$ be two solutions to problem \eqref{quan}.
Let us assume that they concentrate at the same point $0$. We denote

\begin{equation*}	c_\lambda^{(l)}:=u^{(l)}_{\lambda}(0)
=\max_{\overline{B_{\delta}(0)}}u^{(l)}_{\lambda}(x)  ~\mbox{for some small fixed}~\delta>0,
\end{equation*}
and
$$
r^{(l)}_{\lambda}:=2\sqrt{2}\big(\lambda e^{(c_\lambda^{(l)})}\big)^{-1/2}, ~~\mbox{for} ~~l=1,2.
$$
Firstly, we have a basic result on $\frac{r^{(1)}_{\lambda}}{r^{(2)}_{\lambda}}.$
\medskip

\begin{lemma}\label{theta12}
	It holds
	\begin{equation*}\label{llsb}
		\frac{r^{(1)}_{\lambda}}{r^{(2)}_{\lambda}}=1+o_{\lambda}\Big(\frac{1}{g_{c_{\lambda}}}\Big),
	\end{equation*}
where $o_{\lambda}(1)$ denotes a quality which goes to zero as $\lambda\rightarrow0$ and $g_{c_{\lambda}}$
is defined in Lemma \ref{lem-10-7-2}.
\end{lemma}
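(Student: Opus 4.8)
\textbf{Proof proposal for Lemma \ref{theta12}.}

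The plan is to compare the two blow-up profiles through their rescaled limits and the relation between $\lambda$ and $c_\lambda$. Set $\eta_\lambda^{(l)}(x) := u_\lambda^{(l)}(r_\lambda^{(l)}x) - c_\lambda^{(l)}$ for $l=1,2$. By Lemma \ref{lem-10-7-1} applied to each solution (note $r_\lambda^{(l)}$ differs from the normalization $\lambda r_\lambda^2 e^{c_\lambda}=8$ only by the fixed constant $2\sqrt2$ versus $2\sqrt2$, so the same analysis applies after trivial rescaling), we have $\eta_\lambda^{(l)}(x) \to -2\ln(1+|x|^2)$ in $C^1_{loc}(\mathbb{R}^2)$ and $\lim_{R\to\infty}\lim_{\lambda\to 0}\int_{B_{Rr_\lambda^{(l)}}}\lambda e^{u_\lambda^{(l)}}dx = 8\pi$. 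The key observation is that both solutions concentrate at $0$ and each carries total mass $\int_{B_1}\lambda e^{u_\lambda^{(l)}}dx \to 8\pi$ (Theorem \ref{Th1.1th}), with essentially all of the mass concentrated in the ball $B_{Rr_\lambda^{(l)}}$ for large $R$, since $u_\lambda^{(l)} \to 8\pi G(\cdot,0)$ in $C^1_{loc}(B_1\setminus\{0\})$ forces $\int_{B_1\setminus B_\delta}\lambda e^{u_\lambda^{(l)}}dx \to 0$ and a standard neck analysis (using the decay of $\eta_0$, or the argument already sketched in the ``asymptotic behavior'' section referred to in the introduction) shows the contribution from the annulus $B_\delta \setminus B_{Rr_\lambda^{(l)}}$ is $o_R(1)$ uniformly in $\lambda$.

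Granting this, the plan is to assume for contradiction that $r_\lambda^{(1)}/r_\lambda^{(2)} \not\to 1$; after passing to a subsequence we may assume $r_\lambda^{(1)}/r_\lambda^{(2)} \to \tau \in [0,\infty]$ with $\tau \neq 1$. Consider two cases by symmetry, say $\tau < 1$ (including $\tau = 0$), i.e. $r_\lambda^{(1)}$ is much smaller than $r_\lambda^{(2)}$ or of a strictly smaller comparable scale. Then I would express the mass of $u_\lambda^{(2)}$ on the smaller ball: since $r_\lambda^{(1)} = \tau r_\lambda^{(2)}(1+o(1))$, the rescaled profile of $u_\lambda^{(2)}$ at scale $r_\lambda^{(1)}$ sees only the portion $\int_{B_{R\tau}} 8 e^{\eta_0}\,dx < 8\pi$ when $\tau$ is finite, or vanishing mass when $\tau = 0$; meanwhile the full $8\pi$ of $u_\lambda^{(2)}$ lives at scale $r_\lambda^{(2)}$. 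Comparing $u_\lambda^{(1)}$ and $u_\lambda^{(2)}$ on a common ball $B_{\delta r_\lambda^{(2)}}$ via the Green representation and the fact that both must agree with $8\pi G(\cdot,0) + O(1)$-type expansions on the overlap region, one derives a relation forcing $c_\lambda^{(1)} - c_\lambda^{(2)} = 4\ln(r_\lambda^{(2)}/r_\lambda^{(1)}) + o(1) = -4\ln\tau + o(1)$; but the definition $r_\lambda^{(l)} = 2\sqrt2(\lambda e^{c_\lambda^{(l)}})^{-1/2}$ already gives $c_\lambda^{(1)} - c_\lambda^{(2)} = 2\ln(r_\lambda^{(2)}/r_\lambda^{(1)}) = -2\ln\tau + o(1)$. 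These two are compatible only if $\ln\tau = 0$, i.e. $\tau = 1$ — wait, that argument needs care; the cleaner route is to directly compare the \emph{values} $u_\lambda^{(1)}(r_\lambda^{(2)}x)$ and $u_\lambda^{(2)}(r_\lambda^{(2)}x)$ on $|x| = \delta$, where both equal $8\pi G(r_\lambda^{(2)}x,0) + o(1) = -4\ln(\delta r_\lambda^{(2)}) + O(1) + o(1)$, the $O(1)$ being the regular part of $G$ which is common; matching this against the inner expansion $u_\lambda^{(l)}(r_\lambda^{(l)} y) = c_\lambda^{(l)} - 2\ln(1+|y|^2) + o(1)$ evaluated at $|y| = \delta r_\lambda^{(2)}/r_\lambda^{(l)}$ yields $c_\lambda^{(l)} - 4\ln(\delta r_\lambda^{(2)}/r_\lambda^{(l)}) + o(1) = -4\ln(\delta r_\lambda^{(2)}) + C + o(1)$, hence $c_\lambda^{(l)} + 4\ln r_\lambda^{(l)} = C + o(1)$ independent of $l$; combined with $c_\lambda^{(l)} + 2\ln r_\lambda^{(l)} = \ln(8/\lambda) + O(1)$... actually $c_\lambda^{(l)} = \ln\frac{8}{\lambda (r_\lambda^{(l)})^2}$ exactly up to the constant in the definition, so $c_\lambda^{(l)} + 2\ln r_\lambda^{(l)}$ is the same for both $l$, and subtracting gives $2\ln(r_\lambda^{(1)}/r_\lambda^{(2)}) = o(1)$, i.e. $\tau = 1$, the desired contradiction.

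I expect the main obstacle to be making the neck/annulus estimate rigorous: one must show that the mass $\int_{B_\delta \setminus B_{Rr_\lambda^{(l)}}}\lambda e^{u_\lambda^{(l)}}dx$ and the discrepancy between $u_\lambda^{(l)}$ and the sum of its inner profile plus $8\pi G$ are genuinely $o(1)$ as $R\to\infty$ and $\lambda \to 0$, uniformly enough to be fed into the matching argument. This requires either a Harnack-type decay estimate for $\eta_\lambda^{(l)}$ on dyadic annuli (the ``decay result for $u_\lambda(r_\lambda x) - c_\lambda$ in $B_{1/r_\lambda}\setminus B_{2R_\epsilon}$'' alluded to in the introduction), or the local Pohozaev identity of Lemma \ref{Poho1} applied on $\partial B_{\delta r_\lambda}$ to pin down the energy $\int_{B_{\delta r_\lambda}}\lambda e^{u_\lambda^{(l)}}dx = 8\pi + o(1)$ for both $l$. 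Once the matching of inner and outer expansions is set up carefully with error control, the contradiction is a short computation using only the definitions of $c_\lambda^{(l)}$ and $r_\lambda^{(l)}$, since $c_\lambda^{(l)} + 2\ln r_\lambda^{(l)}$ is a constant independent of $l$ while the matching forces $c_\lambda^{(l)} + 4\ln r_\lambda^{(l)}$ to agree up to $o(1)$.
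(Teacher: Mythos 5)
Your reduction at the end is fine as algebra, but the step that carries all the content is not proved, and as stated it is not justified by anything available before this lemma. You evaluate the outer expansion $u_\lambda^{(l)}=8\pi G(\cdot,0)+o(1)$ on the circle $|x|=\delta r_\lambda^{(2)}$, but Theorem \ref{Th1.1th} only gives this convergence in $C^1_{loc}(B_1\setminus\{0\})$, i.e.\ on circles whose radius is bounded below independently of $\lambda$; on a circle shrinking like $r_\lambda$ the $o(1)$ error is exactly the neck estimate you have not established. If instead you match at a fixed radius $\delta$, the inner variable is $|y|=\delta/r_\lambda^{(l)}\to\infty$, where the inner expansion $\eta_\lambda^{(l)}(y)=-2\ln(1+|y|^2)+o(1)$ of Lemma \ref{lem-10-7-1} (again only $C^1_{loc}$) is not available, and Lemma \ref{lem2.4} supplies only a one-sided upper bound with an $\varepsilon$-loss, not a two-sided asymptotic with $o(1)$ error. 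There is also a scale inconsistency in the displayed matching: for $l=2$ your inner variable on the matching circle is $|y|=\delta$, which is small, so $-2\ln(1+|y|^2)$ is not $-4\ln|y|+o(1)$ there; the far-field form of the bubble can only be matched on circles much larger than $r_\lambda$, which sends you back to the unproved neck estimate. Finally, the relation you need, $c_\lambda^{(l)}+4\ln r_\lambda^{(l)}=C+o_\lambda(1)$ with the same constant $C$ for $l=1,2$, is (via $\lambda (r_\lambda^{(l)})^2 e^{c_\lambda^{(l)}}=8$) equivalent to the expansion $c_\lambda=A-2\log\lambda+o_\lambda(1)$, so your proposal in effect assumes the hard statement rather than proving it; neither the Pohozaev identity on $\partial B_{\delta r_\lambda}$ nor mass quantization by itself pins down this constant.

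For comparison, the paper proves exactly that expansion as Lemma \ref{add-lem1} in the appendix: writing $c_\lambda=u_\lambda(0)=\lambda\int_{B_1}G(y,0)e^{u_\lambda(y)}\,dy$, rescaling by $r_\lambda$, and controlling the tail integrals with the decay estimate of Lemma \ref{lem2.4} yields \eqref{lambda_gamma}, $c_\lambda=A-2\log\lambda+o_\lambda(1)$ with $A$ independent of the solution. Given this, Lemma \ref{theta12} is two lines: $r^{(1)}_\lambda/r^{(2)}_\lambda=e^{-\frac12(c^{(1)}_\lambda-c^{(2)}_\lambda)}$ by definition, and \eqref{lambda_gamma} applied to both solutions gives $c^{(1)}_\lambda-c^{(2)}_\lambda=o_\lambda(1)$. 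If you want to keep your matched-asymptotics framing, you would first have to prove a genuine two-sided pointwise estimate in the neck region $B_\delta\setminus B_{Rr_\lambda}$ with a solution-independent constant, which amounts to redoing the appendix argument; as written, the proposal has a genuine gap.
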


\begin{proof}
Firstly, we have
\begin{equation}\label{5.1-1}
\frac{r^{(1)}_{\lambda}}{r^{(2)}_{\lambda}}= e^{-\frac{1}{2}\big(c^{(1)}_\lambda-c^{(2)}_\lambda\big)}.
\end{equation}
From \eqref{lambda_gamma}, we find
	\begin{equation}\label{5.1-3}
		\begin{split}
			c^{(1)}_\lambda-c^{(2)}_\lambda=o_{\lambda}\Big(\frac{1}{g_{c_{\lambda}}}\Big).
		\end{split}
	\end{equation}
	Hence we can deduce from \eqref{5.1-1} and \eqref{5.1-3} that
	\begin{equation*}
		\frac{r^{(1)}_{\lambda}}{r^{(2)}_{\lambda}}=1+o_{\lambda}\Big(\frac{1}{g_{c_{\lambda}}}\Big).
	\end{equation*}
\end{proof}

In the following, we will consider the same quadratic form already introduced in \eqref{P}
\begin{equation}\label{p1uv}
	\begin{split}
		P^{(1)}(u,v):=&- 2\delta r^{(1)}_{\lambda}\int_{\partial B_{\delta r^{(1)}_{\lambda}}} \big\langle \nabla u ,\nu\big\rangle \big\langle \nabla v,\nu\big\rangle  d\sigma + \delta r^{(1)}_{\lambda}  \int_{\partial B_{\delta r^{(1)}_{\lambda}}} \big\langle \nabla u , \nabla v \big\rangle  d\sigma.
	\end{split}
\end{equation}

Noting that if $u$ and $v$ are harmonic in $ B_{\delta r^{(1)}_{\lambda}}\backslash \{0\}$, then by Lemma \ref{indep_d}, we know that $P^{(1)}(u,v)$
is independent of $\delta r^{(1)}_{\lambda} \in (0,d].$

Now if $u_{\lambda}^{(1)}\not \equiv u_{\lambda}^{(2)}$ in $B_{\delta r^{(1)}_{\lambda}}$, we set
\begin{equation}\label{eta-def}
	\xi_{\lambda}:=\frac{u_{\lambda}^{(1)}-u_{\lambda}^{(2)}}
	{\|u_{\lambda}^{(1)}-u_{\lambda}^{(2)}\|_{L^{\infty}(B_{\delta r^{(1)}_{\lambda}})}},
\end{equation}
then $\xi_{\lambda}$ satisfies $\|\xi_{\lambda}\|_{L^{\infty}(B_{\delta r^{(1)}_{\lambda}})}=1$ and
\begin{equation}\label{eta-equa}
	\begin{split}
		- \Delta \xi_{\lambda}&=-\frac{\Delta u_{\lambda}^{(1)}-\Delta u_{\lambda}^{(2)}}
		{\|u_{\lambda}^{(1)}-u_{\lambda}^{(2)}\|_{L^{\infty}(B_{\delta r^{(1)}_{\lambda}})}}\\
		&=\frac{u_{\lambda}^{(1)} -  u_{\lambda}^{(2)}}
		{\|u_{\lambda}^{(1)}-u_{\lambda}^{(2)}\|_{L^{\infty}(B_{\delta r^{(1)}_{\lambda}})}}\frac{1}{(1-|x|^{2})^{2}}
		+
		\frac{\lambda \big(e^{(u_{\lambda}^{(1)})} -  e^{(u_{\lambda}^{(2)})}\big)}
		{\|u_{\lambda}^{(1)}-u_{\lambda}^{(2)}\|_{L^{\infty}(B_{\delta r^{(1)}_{\lambda}})}}\\
		&
		=:\Big[\frac{1}{(1-|x|^{2})^{2}}
		+
		E_{\lambda}\Big]\xi_{\lambda},
	\end{split}
\end{equation}
where
\begin{equation}\label{Dlambda-def}
	E_{\lambda}(x):= \lambda   \displaystyle\int_{0}^1
	 e^{ F_t(x)}  dt,
\end{equation}
with $F_t(x):=tu_{\lambda}^{(1)}(x)+(1-t)u_{\lambda}^{(2)}(x)$.
\medskip

\begin{lemma}\label{theta-D}
	There holds
	\begin{equation}\label{Dlambda-equa}
		\big(r^{(1)}_{\lambda}\big)^{2}\Big[\frac{
			1}{(1-|r^{(1)}_{\lambda}x|^{2})^{2}} +E_{\lambda}\big(r^{(1)}_{\lambda}x\big)\Big] =
		8e^{\eta_{0}(x)} \Big(1+ o_{\lambda}(\frac{1}{g_{c_{\lambda}}}) \Big),
	\end{equation}
\[\]
	uniformly on compact sets.
	Particularly, we have

	\begin{equation}\label{Dlambda-lim}
		\big(r^{(1)}_{\lambda}\big)^{2}\Big[\frac{
			1}{(1-|r^{(1)}_{\lambda}x|^{2})^{2}} +E_{\lambda}\big(r^{(1)}_{\lambda}x\big)\Big] \rightarrow
		8e^{\eta_{0}(x)} ~  ~\text{in}~C_{loc}\big(\mathbb{R}^2\big).
	\end{equation}
\end{lemma}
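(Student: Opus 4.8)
\textbf{Proof proposal for Lemma \ref{theta-D}.}

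The plan is to establish \eqref{Dlambda-equa} by analysing the two contributions on the left-hand side separately: the singular Hardy coefficient $\frac{1}{(1-|r^{(1)}_{\lambda}x|^2)^2}$ and the nonlinear term $E_{\lambda}(r^{(1)}_{\lambda}x)$. Since $r^{(1)}_{\lambda}\to 0$ by Lemma \ref{r=0}, on any compact set $|x|\le R$ we have $|r^{(1)}_{\lambda}x|\le Rr^{(1)}_{\lambda}\to 0$, so that $(1-|r^{(1)}_{\lambda}x|^2)^{-2}= 1+O\big((r^{(1)}_{\lambda})^2\big)$ uniformly; multiplying by $(r^{(1)}_{\lambda})^2$ this term is $O\big((r^{(1)}_{\lambda})^2\big)=o_{\lambda}(1)$ and is therefore absorbed into the error. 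The substantive part is the nonlinear term. Writing $F_t(x)=t u_{\lambda}^{(1)}(x)+(1-t)u_{\lambda}^{(2)}(x)$, I will show
\[
\big(r^{(1)}_{\lambda}\big)^2\lambda\int_0^1 e^{F_t(r^{(1)}_{\lambda}x)}\,dt = 8e^{\eta_0(x)}\big(1+o_{\lambda}(1)\big),
\]
uniformly on compacts, which combined with the previous observation gives \eqref{Dlambda-equa}.

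To handle the nonlinear term, first note that by Lemma \ref{lem-10-7-1} applied to each solution $u_{\lambda}^{(l)}$, the rescaled functions $\eta_{\lambda}^{(l)}(x):=u_{\lambda}^{(l)}(r^{(l)}_{\lambda}x)-c^{(l)}_{\lambda}$ converge to $\eta_0(x)=-2\ln(1+|x|^2)$ in $C^1_{loc}(\mathbb{R}^2)$. Using Lemma \ref{theta12}, namely $r^{(1)}_{\lambda}/r^{(2)}_{\lambda}=1+o_{\lambda}(1)$, together with \eqref{5.1-3} that $c^{(1)}_{\lambda}-c^{(2)}_{\lambda}=o_{\lambda}(1)$, I can rewrite $u_{\lambda}^{(2)}(r^{(1)}_{\lambda}x)-c^{(1)}_{\lambda}$ in terms of $\eta_{\lambda}^{(2)}$ evaluated at the slightly perturbed point $(r^{(1)}_{\lambda}/r^{(2)}_{\lambda})x$ plus the constant $c^{(2)}_{\lambda}-c^{(1)}_{\lambda}$; by the $C^1_{loc}$-convergence and continuity of $\eta_0$, this still converges to $\eta_0(x)$ uniformly on compacts. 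Hence for each $t\in[0,1]$,
\[
F_t(r^{(1)}_{\lambda}x)-c^{(1)}_{\lambda} = t\,\eta_{\lambda}^{(1)}(x) + (1-t)\big(\eta_{\lambda}^{(2)}(\tfrac{r^{(1)}_{\lambda}}{r^{(2)}_{\lambda}}x)+c^{(2)}_{\lambda}-c^{(1)}_{\lambda}\big) \longrightarrow \eta_0(x)
\]
uniformly in $x$ on compacts and uniformly in $t\in[0,1]$. Therefore $e^{F_t(r^{(1)}_{\lambda}x)-c^{(1)}_{\lambda}}\to e^{\eta_0(x)}$ uniformly; integrating in $t$ over $[0,1]$ and multiplying by $\lambda (r^{(1)}_{\lambda})^2 e^{c^{(1)}_{\lambda}} = 8$ (by the normalisation $r^{(1)}_{\lambda}=2\sqrt{2}(\lambda e^{c^{(1)}_{\lambda}})^{-1/2}$, i.e. $\lambda (r^{(1)}_{\lambda})^2 e^{c^{(1)}_{\lambda}}=8$) yields precisely $8e^{\eta_0(x)}(1+o_{\lambda}(1))$.

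The statement \eqref{Dlambda-lim} is then immediate: all the estimates above are uniform on compact subsets of $\mathbb{R}^2$, so the convergence holds in $C_{loc}(\mathbb{R}^2)$. I expect the main technical obstacle to be the careful bookkeeping of the domain mismatch — evaluating $u_{\lambda}^{(2)}$ at points scaled by $r^{(1)}_{\lambda}$ rather than its natural scale $r^{(2)}_{\lambda}$ — and ensuring the convergence is genuinely uniform in $t\in[0,1]$ so that the $t$-integral passes to the limit; both are controlled once one invokes Lemmas \ref{theta12}, \ref{lem-10-7-1} and the equicontinuity furnished by the $C^1_{loc}$ bounds, but these are the steps requiring the most care.
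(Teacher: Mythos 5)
Your proposal is correct and follows essentially the same route as the paper: the rescaled Hardy coefficient is absorbed as $O\big((r^{(1)}_{\lambda})^2\big)=o_{\lambda}(1)$, and the nonlinear term is handled by writing $F_t(r^{(1)}_{\lambda}x)-c^{(1)}_{\lambda}$ in terms of $\eta^{(1)}_{\lambda}$, $\eta^{(2)}_{\lambda}$, then invoking $c^{(1)}_{\lambda}-c^{(2)}_{\lambda}=o_{\lambda}(1)$, $r^{(1)}_{\lambda}/r^{(2)}_{\lambda}=1+o_{\lambda}(1)$, the convergence of the rescaled profiles to $\eta_0$, and the normalisation $\lambda (r^{(1)}_{\lambda})^2 e^{c^{(1)}_{\lambda}}=8$. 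The only cosmetic difference is that you control $\eta^{(2)}_{\lambda}\big(\tfrac{r^{(1)}_{\lambda}}{r^{(2)}_{\lambda}}x\big)-\eta^{(1)}_{\lambda}(x)$ directly via Lemma \ref{lem-10-7-1}, whereas the paper routes this through Lemma \ref{lem-10-7-2}; the two are equivalent.
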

\begin{proof}
	Firstly,  by Lemma \ref{lem-10-7-3} noting that $r^{(1)}_{\lambda}=o\big((c^{(1)}_{\lambda})^{-\frac{1}{2}}\big)=o_{\lambda}(1),$ we have
	\begin{equation}\label{5.5-add1}
		\begin{split}
			\frac{1}{(1-|r^{(1)}_{\lambda}x|^{2})^{2}}=
			1+2|r^{(1)}_{\lambda}x|^{2}+O(|r^{(1)}_{\lambda}x|^{4})
			=1+o_{\lambda}(1),
		\end{split}
	\end{equation}
	which implies that
	\begin{equation}\label{7-11-add1}
		\begin{split}			\frac{\big(r^{(1)}_{\lambda}\big)^{2}}{(1-|r_{\lambda}x|^{2})^{2}}			
=\big(r^{(1)}_{\lambda}\big)^{2}(1+o_{\lambda}(1))=o_{\lambda}\big(\frac{1}{g_{c_{\lambda}}}\big).
		\end{split}
	\end{equation}
Also, we have
\begin{equation}\label{5.5-1}
\begin{split}
\big(r^{(1)}_{\lambda}\big)^{2}
E_{\lambda}\big(r^{(1)}_{\lambda}x\big)
=
\lambda\big(r_\lambda^{(1)}\big)^2
 \displaystyle\int_{0}^1
e^{F_t(r^{(1)}_{\lambda}x)}dt
\end{split}
\end{equation}
and
\begin{equation}\label{5.5-2}
\begin{split}
F_t\big(r^{(1)}_{\lambda}x\big )
&= tu_\lambda^{(1)}
\big(r^{(1)}_{\lambda}x\big )+(1-t) u_\lambda^{(2)}\big(r^{(1)}_{\lambda}x\big )
\\
&= t\big(c^{(1)}_\lambda+\eta^{(1)}_\lambda(x)\big)
			+\big(1-t\big) \bigg(c^{(2)}_\lambda+\eta^{(2)}_\lambda\Big(\frac{ r^{(1)}_\lambda }
				{r^{(2)}_\lambda}x\Big)
\\
&=c^{(1)}_\lambda+\eta^{(1)}_\lambda(x)+\big(1-t\big) f_\lambda(x),
\end{split}
\end{equation}
where
$$
f_\lambda(x):
=c^{(2)}_\lambda-c^{(1)}_\lambda+\eta^{(2)}_\lambda\Big(\frac{ r^{(1)}_\lambda }
{r^{(2)}_\lambda}x\Big)
-\eta^{(1)}_\lambda\big(x\big).
$$
	Moreover, by  \eqref{lambda_gamma} and Lemma \ref{theta12}, direct computation gives

\begin{equation}\label{6-8-1}
\begin{split}
f_\lambda(x)
&= o_{\lambda}\Big(\frac{1}{g_{c_{\lambda}}}\Big)+ \eta_\lambda^{(2)}(x)+o_{\lambda}\Big(\frac{1}{g_{c_{\lambda}}}\Big) -\eta^{(1)}_\lambda\big(x\big)
 = o_{\lambda}\Big(\frac{1}{g_{c_{\lambda}}}\Big)
 +O\left(\big|\eta^{(2)}_\lambda(x)-\eta^{(1)}_\lambda(x)\big|\right).
\end{split}
\end{equation}
It follows form Lemma \ref{lem-10-7-2} that

\begin{equation}\label{5.5-3}
\begin{split}
f_\lambda(x) &= o_{\lambda}\Big(\frac{1}{g_{c_{\lambda}}}\Big)
+O\big(\big|\eta^{(2)}_\lambda(x)-\eta^{(1)}_\lambda(x)\big|\big)\\
&=o_{\lambda}\Big(\frac{1}{g_{c_{\lambda}}}\Big)+O\Big(\big|\frac{w^{(2)}_\lambda(x)-w^{(1)}_\lambda(x)}{g_{c_{\lambda}}}\big|\Big) = o_{\lambda}\Big(\frac{1}{g_{c_{\lambda}}}\Big).
\end{split}
\end{equation}
\[\]
Hence we can deduce from \eqref{lambda_gamma}, \eqref{5.5-2} and \eqref{5.5-3} that

\begin{equation*}
\begin{split}
e^{ F_t (r^{(1)}_{\lambda}x)}
= e^{c_\lambda+\eta^{(1)}_\lambda(x)
+(1-t) f_\lambda(x)}= e^{c_\lambda+\eta^{(1)}_\lambda(x)} \Big(1+o_{\lambda}\Big(\frac{1}{g_{c_{\lambda}}}\Big)\Big),
\end{split}
\end{equation*}
\[\]
which implies
\begin{equation}\label{5.5-4}
\begin{split}
 \big(r_\lambda^{(1)}\big)^2 \int^1_0
 e^{ F_t(r^{(1)}_{\lambda}x) } dt
=& \big(r_\lambda^{(1)}\big)^2 \int^1_0 e^{c^{(1)}_\lambda+\eta^{(1)}_\lambda(x)} \Big(1+o_{\lambda}\Big(\frac{1}{g_{c_{\lambda}}}\Big)\Big) dt
 \\
=& \frac{8}{\lambda } e^{\eta^{(1)}_\lambda(x)}\Big(1+o_{\lambda}\Big(\frac{1}{g_{c_{\lambda}}}\Big)\Big).
\end{split}
\end{equation}
\[\]
Hence, by \eqref{5.5-3} we get

\begin{align}\label{5.5-6}
\begin{split}
\big(r^{(1)}_{\lambda}\big)^{2}
E_{\lambda}\big(r^{(1)}_{\lambda}x\big)
=8 e^{\eta^{(1)}_\lambda(x)}
\Big(1+o_{\lambda}\Big(\frac{1}{g_{c_{\lambda}}}\Big)\Big).
\end{split}
\end{align}
\[\]
Then, from \eqref{7-11-add1},  \eqref{5.5-6} and Lemma \ref{lem-10-7-1}, we derive \eqref{Dlambda-equa}. Also \eqref{Dlambda-lim} can be obtained from the above computations and Lemma \ref{lem-10-7-1}.
\end{proof}

Now applying the blow up analysis, we establish an estimate on $\xi_\lambda$.
\begin{proposition}\label{prop_tildeeta}
	Let $\widetilde{\xi}_{\lambda}(x):=\xi_{\lambda}\big(r^{(1)}_{\lambda}x\big)$, where $\xi_{\lambda}$ is defined in \eqref{eta-def}. Then by taking
	a subsequence if necessary, we have
	\begin{equation}\label{tildeeta-lim}
		\widetilde{\xi}_{\lambda}(x) \to \widetilde{\alpha}_{0} \frac{1-|x|^2}{1+|x|^2}
		~~~\mbox{in}~~~C^1_{loc}\big(\mathbb{R}^2\big),~\mbox{as}~\lambda \to 0,
	\end{equation}
	where $\widetilde{\alpha}_{0}$ is some constant.
\end{proposition}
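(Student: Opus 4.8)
The plan is to treat $\widetilde{\xi}_{\lambda}$ as a bounded sequence of solutions to a linear Schr\"odinger-type equation with coefficient converging to the Liouville bubble potential $8e^{\eta_0}$, apply elliptic estimates to extract a $C^1_{loc}$-convergent subsequence, and then identify the limit using the known nondegeneracy (linearized-Liouville) kernel on $\mathbb{R}^2$. First I would rescale equation \eqref{eta-equa}: setting $x \mapsto r^{(1)}_{\lambda}x$ and using $\widetilde{\xi}_{\lambda}(x)=\xi_{\lambda}(r^{(1)}_{\lambda}x)$, the function $\widetilde{\xi}_{\lambda}$ satisfies
\[
-\Delta \widetilde{\xi}_{\lambda}(x)=\big(r^{(1)}_{\lambda}\big)^{2}\Big[\frac{1}{(1-|r^{(1)}_{\lambda}x|^{2})^{2}}+E_{\lambda}\big(r^{(1)}_{\lambda}x\big)\Big]\widetilde{\xi}_{\lambda}(x) \quad \text{in } B_{\delta/r^{(1)}_{\lambda}},
\]
and by Lemma \ref{theta-D} the bracket converges to $8e^{\eta_0(x)}$ in $C_{loc}(\mathbb{R}^2)$. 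Since $\|\xi_{\lambda}\|_{L^{\infty}(B_{\delta r_{\lambda}})}=1$, we have $\|\widetilde{\xi}_{\lambda}\|_{L^{\infty}}\le 1$ on every fixed ball once $\lambda$ is small, so the right-hand side is uniformly bounded on compact sets; standard interior $L^p$ and Schauder estimates then give uniform $C^{1,\alpha}_{loc}$ bounds, and Arzel\`a--Ascoli yields a subsequence with $\widetilde{\xi}_{\lambda}\to \widetilde{\xi}_{0}$ in $C^1_{loc}(\mathbb{R}^2)$, where $\widetilde{\xi}_{0}$ is a bounded solution of the linearized Liouville equation
\[
-\Delta \widetilde{\xi}_{0}=8e^{\eta_0}\widetilde{\xi}_{0}=\frac{16}{(1+|x|^2)^2}\widetilde{\xi}_{0} \quad \text{in } \mathbb{R}^2, \qquad \|\widetilde{\xi}_{0}\|_{L^{\infty}(\mathbb{R}^2)}\le 1.
\]

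Next I would invoke the classification of bounded solutions of this linearized equation. The space of bounded solutions is three-dimensional, spanned by $\frac{1-|x|^2}{1+|x|^2}$ (coming from scaling) and the two translation modes $\frac{x_i}{1+|x|^2}$, $i=1,2$. To eliminate the translation modes I would use the radial symmetry: by Proposition \ref{pro1} each $u^{(l)}_{\lambda}$ is radial about the origin, hence $\xi_{\lambda}$ and therefore $\widetilde{\xi}_{\lambda}$ are radial, so the limit $\widetilde{\xi}_{0}$ is radial, which kills the two translation modes and forces $\widetilde{\xi}_{0}(x)=\widetilde{\alpha}_{0}\frac{1-|x|^2}{1+|x|^2}$ for some constant $\widetilde{\alpha}_{0}$. (Alternatively, if one does not want to rely on radial symmetry here, one can carry out the blow-up at the maximum point and use an orthogonality/Pohozaev-type argument to remove the translation parts, but the radial reduction is the cleanest route given Proposition \ref{pro1}.) This identifies the limit and completes the proof of \eqref{tildeeta-lim}.

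The main obstacle I expect is making rigorous the uniform $L^{\infty}$ control of $\widetilde{\xi}_{\lambda}$ on compact sets together with the passage to the limit equation — specifically, one must be sure that the lower-order Hardy term $\big(r^{(1)}_{\lambda}\big)^2/(1-|r^{(1)}_{\lambda}x|^2)^2$ genuinely vanishes locally (this is \eqref{7-11-add1}, which relies on $r_{\lambda}=o(c_{\lambda}^{-1/2})=o_{\lambda}(1)$) and that $E_{\lambda}(r^{(1)}_{\lambda}x)$ has the stated Liouville profile, which in turn rests on the delicate estimate $f_{\lambda}(x)=o_{\lambda}(1)$ from Lemma \ref{lem-10-7-2} controlling $|\eta^{(2)}_{\lambda}-\eta^{(1)}_{\lambda}|$ on compacta. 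A secondary technical point is the invocation of the full nondegeneracy classification of the linearized Liouville operator on $\mathbb{R}^2$ for merely bounded (not necessarily decaying) solutions; this is classical but should be cited carefully. Once these inputs are in hand, the remainder — elliptic regularity, compactness, and identification of the radial kernel element — is routine.
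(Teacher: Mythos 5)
Your proposal is correct and follows essentially the same route as the paper's proof: rescale \eqref{eta-equa}, use Lemma \ref{theta-D} to identify the limiting coefficient $8e^{\eta_0}$, apply interior elliptic estimates and Arzel\`a--Ascoli for $C^1_{loc}$ compactness, and then combine the radial symmetry from Proposition \ref{pro1} with the kernel classification (Lemma \ref{lem3.1}) to eliminate the translation modes and obtain $\widetilde{\alpha}_{0}\frac{1-|x|^2}{1+|x|^2}$. Your remark about citing the classification carefully for merely bounded solutions is well taken, since Lemma \ref{lem3.1} as stated assumes finite Dirichlet energy, but this does not change the argument.
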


\begin{proof}
	Since $|\widetilde{\xi}_{\lambda}|\leq 1$,
	by Lemma \ref{theta-D} and the standard elliptic regularity theory, we find that
	\[
	\widetilde{\xi}_{\lambda}(x) \in C^{1,\gamma}\big(B_R\big)~\mbox{and}~
	\|\widetilde{\xi}_{\lambda}\|_{C^{1,\gamma}(B_R)} \leq C
	\]
\[\]
	for any fixed large $R$ and some $\gamma \in (0,1)$, where $C$ is independent of $\lambda$. Then there exists a subsequence (still denoted by $\widetilde{\xi}_{\lambda}$) such that

	$$\widetilde{\xi}_{\lambda}(x)\to \xi_0(x)~~\mbox{in}~C^1\big(B_R\big).$$
\[\]
	By \eqref{eta-equa}, it is easy to check that $\widetilde{\xi}_{\lambda}$ satisfies
	\begin{equation*}
		-\Delta \widetilde{\xi}_{\lambda} =\big(r^{(1)}_{\lambda}\big)^{2}
		\Big[\frac{1 }{(1-|r^{(1)}_{\lambda}x|^{2})^{2}}
		+
		E_{\lambda}\big(r^{(1)}_{\lambda}x\big) \Big]\widetilde{\xi}_{\lambda}    \,\,\,\,\,~\mbox{in}~B_{\delta}.
	\end{equation*}
\[\]
	Then by Lemma \ref{theta-D},
	letting $\lambda \to 0$, we find that $\xi_0$ satisfies
	\begin{equation*}
		-\Delta \xi_0 = 8e^{\eta_{0}} \xi_0    \,\,\,\, \mbox{in} \,\,\,\,   \mathbb{R}^2.
	\end{equation*}
\[\]
	Since $u_{\lambda}^{(l)}(x)(l=1,2)$ is radial, we know that $\xi_{\lambda}(x)$
	is radial which implies that $\widetilde{\xi}_{\lambda} $ is also radial.
	Hence, by Lemma \ref{lem3.1}, we have
	\[
	\xi_0(x)= \widetilde{\alpha}_{0} \frac{1-|x|^2}{1+|x|^2},
	\]
\[\]
	where $\widetilde{\alpha}_{0}$ is some constant.
	
\end{proof}

\begin{proposition}
	For any small fixed constant $\delta>0$, we have the following local Pohozaev identity
\begin{equation}\label{p1_ueta}
\begin{split}
P^{(1)}\big(u_\lambda^{(1)}+u_\lambda^{(2)},\xi_{\lambda}\big)
&=
2\delta r^{(1)}_{\lambda}\int_{\partial  B_{\delta r^{(1)}_{\lambda}}}
\frac{\widetilde{F}_{\lambda}\xi_{\lambda}}{(1-|x|^{2})^{2}} d\sigma
+4\int_{ B_{\delta r^{(1)}_{\lambda}}} \frac{\widetilde{F}_{\lambda}\xi_{\lambda}(1+|x|^{2})}{(1-|x|^{2})^{3}} d\sigma\\
&\quad+2\delta r^{(1)}_{\lambda}\int_{\partial  B_{\delta r^{(1)}_{\lambda}}}
E_{\lambda}\xi_{\lambda} d\sigma
-4\int_{ B_{\delta r^{(1)}_{\lambda}}}
E_{\lambda}\xi_{\lambda} dx,
\end{split}
\end{equation}
where $P^{(1)}$ is the quadratic form in \eqref{p1uv},
	$\nu=\big(\nu_{1},\nu_2\big)$ is the unit  outward normal of $\partial  B_{\delta r^{(1)}_{\lambda}}(0)$ and
\begin{equation}\label{def_tildeE}
\widetilde{F}_{\lambda}(x):= \displaystyle\int_{0}^1
\big(tu_{\lambda}^{(1)}(x)+(1-t)u_{\lambda}^{(2)}(x)\big) dt
\end{equation}
and $E_{\lambda}$ is the same as \eqref{Dlambda-def}.
\end{proposition}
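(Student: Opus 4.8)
The plan is to derive the identity \eqref{p1_ueta} by applying a Pohozaev-type manipulation to the equation satisfied by $\xi_{\lambda}$ against the function $w_{\lambda}:=u_{\lambda}^{(1)}+u_{\lambda}^{(2)}$, on the ball $B_{\delta r^{(1)}_{\lambda}}(0)$. The starting point is the scaling vector field $x\cdot\nabla$, exactly as in Lemma \ref{Poho1}, but now applied to the \emph{pair} of functions rather than a single solution. Concretely, I would multiply the equation $-\Delta\xi_{\lambda}=\big[\frac{1}{(1-|x|^2)^2}+E_{\lambda}\big]\xi_{\lambda}$ (from \eqref{eta-equa}) by $x\cdot\nabla w_{\lambda}$, multiply the equation $-\Delta w_{\lambda}=\frac{w_{\lambda}}{(1-|x|^2)^2}+\lambda\big(e^{u_{\lambda}^{(1)}}+e^{u_{\lambda}^{(2)}}\big)$ by $x\cdot\nabla\xi_{\lambda}$, integrate both over $B_{\delta r^{(1)}_{\lambda}}$, and add. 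The left-hand side, after integration by parts on the two Laplacian terms, produces precisely the symmetric boundary quadratic form $P^{(1)}(w_{\lambda},\xi_{\lambda})$ defined in \eqref{p1uv}: the interior terms $\int(\Delta\xi_{\lambda})(x\cdot\nabla w_{\lambda})+\int(\Delta w_{\lambda})(x\cdot\nabla\xi_{\lambda})$ collapse, in dimension two, to a pure boundary expression involving $\langle\nabla w_{\lambda},\nu\rangle\langle\nabla\xi_{\lambda},\nu\rangle$ and $\langle\nabla w_{\lambda},\nabla\xi_{\lambda}\rangle$ on $\partial B_{\delta r^{(1)}_{\lambda}}$, with the coefficient $-2\delta r^{(1)}_{\lambda}$ and $\delta r^{(1)}_{\lambda}$ respectively (note $|x|=\delta r^{(1)}_{\lambda}$ on that sphere).

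The right-hand side splits into a Hardy-singularity contribution and an exponential contribution. For the Hardy part, I would write $\frac{w_{\lambda}}{(1-|x|^2)^2}(x\cdot\nabla\xi_{\lambda})+\frac{\xi_{\lambda}}{(1-|x|^2)^2}(x\cdot\nabla w_{\lambda})=\frac{1}{(1-|x|^2)^2}\,x\cdot\nabla(w_{\lambda}\xi_{\lambda})$ and integrate by parts, moving the divergence onto the weight: this yields a boundary term on $\partial B_{\delta r^{(1)}_{\lambda}}$ carrying $\frac{w_{\lambda}\xi_{\lambda}}{(1-|x|^2)^2}$ with the factor $2\cdot\frac{|x|^2}{|x|}=2\delta r^{(1)}_{\lambda}$, plus an interior term equal to $-\int w_{\lambda}\xi_{\lambda}\,\mathrm{div}\!\big(\frac{x}{(1-|x|^2)^2}\big)dx$; since $\mathrm{div}\!\big(\frac{x}{(1-|x|^2)^2}\big)=\frac{2}{(1-|x|^2)^2}+\frac{4|x|^2}{(1-|x|^2)^3}=\frac{2(1+|x|^2)}{(1-|x|^2)^3}$ in $\mathbb{R}^2$, that reproduces the $-\int_{B_{\delta r^{(1)}_{\lambda}}}\frac{w_{\lambda}\xi_{\lambda}(1+|x|^2)}{(1-|x|^2)^3}$-type term. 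I would then rewrite $w_{\lambda}\xi_{\lambda}=2\widetilde{F}_{\lambda}\xi_{\lambda}$ using the definition \eqref{def_tildeE} of $\widetilde{F}_{\lambda}$ (which is the $t$-average of $tu_{\lambda}^{(1)}+(1-t)u_{\lambda}^{(2)}$, hence equals $\tfrac12(u_{\lambda}^{(1)}+u_{\lambda}^{(2)})=\tfrac12 w_{\lambda}$), absorbing the factor of $2$ to land on the first two terms on the right of \eqref{p1_ueta}.

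For the exponential part, the key algebraic observation is that $\lambda\big(e^{u_{\lambda}^{(1)}}+e^{u_{\lambda}^{(2)}}\big)(x\cdot\nabla\xi_{\lambda})+E_{\lambda}\xi_{\lambda}(x\cdot\nabla w_{\lambda})$ can be organized as the gradient term $x\cdot\nabla\big(E_{\lambda}\,\xi_{\lambda}\cdot(\text{something})\big)$ up to lower order; more precisely, recalling $E_{\lambda}\xi_{\lambda}=\lambda\big(e^{u_{\lambda}^{(1)}}-e^{u_{\lambda}^{(2)}}\big)$ by \eqref{eta-equa}--\eqref{Dlambda-def}, one has $E_{\lambda}\xi_{\lambda}\,(x\cdot\nabla w_{\lambda})+\lambda(e^{u_{\lambda}^{(1)}}+e^{u_{\lambda}^{(2)}})(x\cdot\nabla\xi_{\lambda})$; writing $x\cdot\nabla(\lambda e^{u_{\lambda}^{(l)}})=\lambda e^{u_{\lambda}^{(l)}}(x\cdot\nabla u_{\lambda}^{(l)})$ and using $\xi_{\lambda}\propto u_{\lambda}^{(1)}-u_{\lambda}^{(2)}$, this combination equals $x\cdot\nabla\big(E_{\lambda}\xi_{\lambda}\big)$ after cancellation, so integrating by parts gives the boundary term $2\delta r^{(1)}_{\lambda}\int_{\partial B_{\delta r^{(1)}_{\lambda}}}E_{\lambda}\xi_{\lambda}\,d\sigma$ minus $\int_{B_{\delta r^{(1)}_{\lambda}}}E_{\lambda}\xi_{\lambda}\,\mathrm{div}(x)\,dx=2\int_{B_{\delta r^{(1)}_{\lambda}}}E_{\lambda}\xi_{\lambda}\,dx$, i.e.\ the last two terms on the right of \eqref{p1_ueta} (the factor $4=2\cdot 2$ again coming from $\mathrm{div}(x)=2$ combined with the two solutions). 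I expect the main obstacle to be bookkeeping: making sure that every boundary integral is evaluated with the correct power of $\delta r^{(1)}_{\lambda}$ (so the factors $2\delta r^{(1)}_{\lambda}$, $4$, etc.\ come out exactly), and verifying that the cross terms genuinely recombine into exact divergences with no leftover — in particular checking the claimed identity $E_{\lambda}\xi_{\lambda}(x\cdot\nabla w_{\lambda})+\lambda(e^{u_{\lambda}^{(1)}}+e^{u_{\lambda}^{(2)}})(x\cdot\nabla\xi_{\lambda})=x\cdot\nabla(E_{\lambda}\xi_{\lambda})$ requires care with the definition of $E_{\lambda}$ as a $t$-integral rather than a naive difference quotient, and one should justify it via the fundamental theorem of calculus $e^{u_{\lambda}^{(1)}}-e^{u_{\lambda}^{(2)}}=(u_{\lambda}^{(1)}-u_{\lambda}^{(2)})\int_0^1 e^{F_t}\,dt$ together with $\partial_{x}F_t=t\partial_x u_{\lambda}^{(1)}+(1-t)\partial_x u_{\lambda}^{(2)}$. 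Apart from that the argument is a standard, if somewhat lengthy, divergence-theorem computation.
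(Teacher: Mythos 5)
Your proposal is correct in substance but follows a genuinely different route from the paper. The paper does not redo any integration by parts at this point: it uses that $P^{(1)}$ in \eqref{p1uv} is a symmetric bilinear form, so that
\begin{equation*}
P^{(1)}\big(u^{(1)}_\lambda+u^{(2)}_\lambda,\xi_\lambda\big)
=\frac{1}{\|u^{(1)}_\lambda-u^{(2)}_\lambda\|_{L^\infty(B_{\delta r_\lambda})}}
\Big(P^{(1)}\big(u^{(1)}_\lambda,u^{(1)}_\lambda\big)-P^{(1)}\big(u^{(2)}_\lambda,u^{(2)}_\lambda\big)\Big),
\end{equation*}
then applies the scalar Pohozaev identity \eqref{puu} of Lemma \ref{lem2.3} to each solution separately, subtracts, and rewrites $(u^{(1)}_\lambda)^2-(u^{(2)}_\lambda)^2=2\widetilde F_\lambda\,(u^{(1)}_\lambda-u^{(2)}_\lambda)$ and $\lambda\big(e^{u^{(1)}_\lambda}-e^{u^{(2)}_\lambda}\big)=E_\lambda\,(u^{(1)}_\lambda-u^{(2)}_\lambda)$. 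You instead polarize the Pohozaev computation itself, testing the equation \eqref{eta-equa} for $\xi_\lambda$ against $x\cdot\nabla(u^{(1)}_\lambda+u^{(2)}_\lambda)$ and the equation for the sum against $x\cdot\nabla\xi_\lambda$; your identification of the left-hand side with $P^{(1)}$ via the polarized Rellich identity in dimension two is correct. The two routes are equivalent; the paper's is shorter because Lemma \ref{lem2.3} is already available, while yours is self-contained but re-derives what \eqref{puu} plus bilinearity already encode.

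Two bookkeeping points. First, your claimed pointwise identity for the exponential part is off by a factor of two: with $\xi_\lambda=(u^{(1)}_\lambda-u^{(2)}_\lambda)/\|u^{(1)}_\lambda-u^{(2)}_\lambda\|_{L^\infty}$ and $E_\lambda\xi_\lambda=\lambda(e^{u^{(1)}_\lambda}-e^{u^{(2)}_\lambda})/\|u^{(1)}_\lambda-u^{(2)}_\lambda\|_{L^\infty}$ one gets
$E_\lambda\xi_\lambda\,x\cdot\nabla(u^{(1)}_\lambda+u^{(2)}_\lambda)+\lambda\big(e^{u^{(1)}_\lambda}+e^{u^{(2)}_\lambda}\big)\,x\cdot\nabla\xi_\lambda=2\,x\cdot\nabla\big(E_\lambda\xi_\lambda\big)$,
not $x\cdot\nabla(E_\lambda\xi_\lambda)$; your final coefficients $2\delta r^{(1)}_\lambda$ and $-4$ already contain this factor, so only the intermediate sentence needs correcting. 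Second, your Hardy term comes out as $-4\int_{B_{\delta r^{(1)}_\lambda}}\widetilde F_\lambda\xi_\lambda\,(1+|x|^2)(1-|x|^2)^{-3}\,dx$, whereas \eqref{p1_ueta} prints $+4$; your sign is in fact the correct one, since $\mathrm{div}\big(x(1-|x|^2)^{-2}\big)=2(1+|x|^2)(1-|x|^2)^{-3}$ enters with a minus sign after integration by parts (testing with $u\equiv\mathrm{const}$ shows the boundary and volume contributions cancel only with that sign), so the sign printed in \eqref{puu}, and hence in \eqref{p1_ueta}, is a typo; note also that the paper's own proof displays the coefficients $2$ and $4\delta r^{(1)}_\lambda$ on these terms, inconsistent with the statement's $4$ and $2\delta r^{(1)}_\lambda$. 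You should say explicitly that your derivation yields the corrected sign rather than asserting that it lands on \eqref{p1_ueta} verbatim.
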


\begin{proof}
By \eqref{puu}, we have
\begin{equation*}
\begin{split}
& P^{(1)}\big(u_\lambda^{(1)}+u_\lambda^{(2)},\xi_{\lambda}\big)
\\
=&~\frac{1}{\|u_\lambda^{(1)}-u_\lambda^{(2)}\|_{L^{\infty}(B_{\delta r_{\lambda}}(0))}} \left(P^{(1)}\big(u_\lambda^{(1)},u_\lambda^{(1)}\big)- P^{(1)}\big(u_\lambda^{(2)},u_\lambda^{(2)}\big)\right)
\\
=&~\frac{1}{\|u_\lambda^{(1)}-u_\lambda^{(2)}\|_{L^{\infty}(B_{\delta r_{\lambda}})}}
\Bigg[ \delta r^{(1)}_{\lambda} \int_{\partial B_{\delta r^{(1)}_{\lambda}}}
\frac{(u^{(1)}_\lambda)^2 - (u^{(2)}_\lambda)^2 }{(1-|x|^{2})^{2}}  \,d\sigma
\\
&+2 \int_{B_{\delta r^{(1)}_{\lambda}}}\big((u^{(1)}_\lambda)^2 - (u^{(2)}_\lambda)^2\big)
\frac{ 1+|x|^{2}}{(1-|x|^{2})^{3}}\,dx
\\
&+
2\lambda\delta r^{(1)}_{\lambda}
\int_{\partial B_{\delta r^{(1)}_{\lambda}}}
\big(e^{u^{(1)}_\lambda}-e^{u^{(2)}_\lambda}\big)\,d\sigma
-4\lambda \int_{B_{\delta r^{(1)}_{\lambda}}}  \big( e^{u^{(1)}_\lambda}-e^{u^{(2)}_\lambda} \big) dx\Bigg]
\\
=&~ 2\delta r^{(1)}_{\lambda} \int_{\partial B_{\delta r^{(1)}_{\lambda}}} \frac{\widetilde{F}_{\lambda}\xi_{\lambda} }{(1-|x|^{2})^{2}}\,d\sigma
+4\int_{B_{\delta r^{(1)}_{\lambda}}} \frac{\widetilde{F}_{\lambda}\xi_{\lambda}(1+|x|^{2})}{(1-|x|^{2})^{3}}  \,dx
\\&+2\delta r^{(1)}_{\lambda} \int_{\partial B_{\delta r^{(1)}_{\lambda}}} E_{\lambda}\xi_{\lambda}\, d\sigma
-4\int_{B_{\delta r^{(1)}_{\lambda}}} E_{\lambda}\xi_{\lambda}\,dx.
\end{split}
\end{equation*}
	
\end{proof}

\begin{proposition}\label{prop-A}
	Let $\widetilde{\alpha}_{0}$ be the constant in \eqref{tildeeta-lim}. Then
	\begin{equation*}
		\widetilde{\alpha}_{0}=0.
	\end{equation*}
\end{proposition}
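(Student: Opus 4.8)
The plan is to pass to the limit $\lambda\to 0$ in the local Pohozaev identity \eqref{p1_ueta} and to evaluate both sides through the blow-up profiles. Since the resulting limiting identity must hold for every small fixed $\delta>0$, it will take the form $\widetilde{\alpha}_{0}\,R(\delta)=0$ for an explicit, non-constant rational function $R$, and hence will force $\widetilde{\alpha}_{0}=0$.

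For the left-hand side $P^{(1)}\big(u^{(1)}_{\lambda}+u^{(2)}_{\lambda},\xi_{\lambda}\big)$, I would change variables by $x=r^{(1)}_{\lambda}y$: because $P^{(1)}$ in \eqref{p1uv} involves only first derivatives on $\partial B_{\delta r^{(1)}_{\lambda}}$ and the weight $\delta r^{(1)}_{\lambda}$ is precisely the scaling-invariant one, this turns $P^{(1)}$ into the same quadratic form on the fixed sphere $\partial B_{\delta}$ of the rescaled functions. By Lemma \ref{lem-10-7-1} one has $u^{(l)}_{\lambda}(r^{(1)}_{\lambda}y)-c^{(l)}_{\lambda}\to\eta_{0}(y)=-2\ln(1+|y|^{2})$ in $C^{1}_{loc}(\mathbb{R}^{2})$ (for $l=2$ one also uses $r^{(1)}_{\lambda}/r^{(2)}_{\lambda}\to 1$ from Lemma \ref{theta12}), while Proposition \ref{prop_tildeeta} gives $\xi_{\lambda}(r^{(1)}_{\lambda}y)=\widetilde{\xi}_{\lambda}(y)\to\widetilde{\alpha}_{0}\,\frac{1-|y|^{2}}{1+|y|^{2}}$ in $C^{1}_{loc}$. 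Since the additive constants $c^{(l)}_{\lambda}$ drop out of $P^{(1)}$, the gradient of $u^{(1)}_{\lambda}+u^{(2)}_{\lambda}$ rescales to $2\nabla\eta_{0}$, and a short radial computation (both $\eta_{0}$ and the kernel element are radial) yields $P^{(1)}\big(u^{(1)}_{\lambda}+u^{(2)}_{\lambda},\xi_{\lambda}\big)=2\widetilde{\alpha}_{0}\,Q_{\delta}+o_{\lambda}(1)$, where $Q_{\delta}$ is an elementary expression in $\delta$.

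For the right-hand side I would treat the four terms of \eqref{p1_ueta} one at a time. The two $\widetilde{F}_{\lambda}$-terms tend to zero: on $B_{\delta r^{(1)}_{\lambda}}$ one has $\widetilde{F}_{\lambda}=\tfrac12(u^{(1)}_{\lambda}+u^{(2)}_{\lambda})=O(c_{\lambda})$, $|\xi_{\lambda}|\le 1$, and the remaining weights are bounded, so after rescaling each such term carries the overall factor $(r^{(1)}_{\lambda})^{2}c_{\lambda}=o_{\lambda}(1)$ (the section \ref{s7} analogue of Lemma \ref{lem-10-7-3}). For the two $E_{\lambda}$-terms I would use Lemma \ref{theta-D}, which gives $(r^{(1)}_{\lambda})^{2}E_{\lambda}(r^{(1)}_{\lambda}y)\to 8e^{\eta_{0}(y)}$ in $C_{loc}(\mathbb{R}^{2})$; together with $\widetilde{\xi}_{\lambda}\to\widetilde{\alpha}_{0}\frac{1-|y|^{2}}{1+|y|^{2}}$ this makes the boundary and interior $E_{\lambda}$-terms converge, respectively, to $\widetilde{\alpha}_{0}$ times explicit boundary and volume integrals of $8e^{\eta_{0}}\frac{1-|y|^{2}}{1+|y|^{2}}$ over $\partial B_{\delta}$ and $B_{\delta}$, again elementary in $\delta$. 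Equating the two sides and evaluating these integrals produces $\widetilde{\alpha}_{0}\,R(\delta)=0$ with $R$ not identically zero, whence $\widetilde{\alpha}_{0}=0$ for the fixed small $\delta$.

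The step I expect to be the main obstacle is the error-term bookkeeping in the two limits above: one has to make sure that every $o_{\lambda}(1)$ genuinely survives multiplication by the large quantities present, namely the factor $c_{\lambda}$ inside $\widetilde{F}_{\lambda}$ and the factor $(r^{(1)}_{\lambda})^{-2}$ hidden in the rescaled gradients on $\partial B_{\delta}$. This is exactly why one needs the sharp rate $(r^{(1)}_{\lambda})^{2}c_{\lambda}=o_{\lambda}(1)$, the $C^{1}_{loc}$ (not merely $C^{0}_{loc}$) convergence in Lemma \ref{lem-10-7-1} and Proposition \ref{prop_tildeeta}, the accurate relation between $\lambda$ and $c_{\lambda}$ in \eqref{lambda_gamma}, and the decay estimate for $u_{\lambda}(r_{\lambda}x)-c_{\lambda}$ on $B_{1/r_{\lambda}}\setminus B_{2R_{\epsilon}}$ announced in the introduction, which controls the profiles away from the core of the bubble.
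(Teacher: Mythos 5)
Your proposal follows essentially the same route as the paper: rescale the local Pohozaev identity \eqref{p1_ueta} by $r^{(1)}_{\lambda}$, send the $\widetilde{F}_{\lambda}$-terms to zero via $(r^{(1)}_{\lambda})^{2}c_{\lambda}=o_{\lambda}(1)$, pass to the limit in the $E_{\lambda}$-terms and in $P^{(1)}$ using Lemma \ref{theta-D}, Lemmas \ref{lem-10-7-1}--\ref{lem-10-7-2} and Proposition \ref{prop_tildeeta}, and conclude from the nonvanishing of the resulting explicit $\delta$-dependent coefficient that $\widetilde{\alpha}_{0}=0$. This matches the paper's argument (equations \eqref{5.9-add4}--\eqref{5.9-add5}) in both structure and the key ingredients.
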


\begin{proof}
First, by Lemma \ref{lem-10-7-2} we have

\begin{equation}\label{5.9-add4}
\begin{split}
&LHS \  \text{of} ~\eqref{p1_ueta}
\\
&=-2\delta r_{\lambda}^{(1)}\int_{\partial B_{\delta r^{(1)}_\lambda }}
\langle \nabla (u_{\lambda}^{(1)}+u_{\lambda}^{(2)}), \nu\rangle \langle \nabla \xi_{\lambda},\nu\rangle d\sigma
+\delta r_{\lambda}^{(1)}\int_{\partial B_{\delta r^{(1)}_\lambda }}\langle \nabla (u_{\lambda}^{(1)}+u_{\lambda}^{(2)}), \nabla \xi_{\lambda}\rangle  d\sigma
\\
&=-2\delta \int_{\partial B_{\delta }}
\langle \nabla [(u_{\lambda}^{(1)}+u_{\lambda}^{(2)})(r_{\lambda}^{(1)}x)], \nu\rangle \langle \nabla \widetilde{\xi}_{\lambda},\nu\rangle d\sigma
+\int_{\partial B_{\delta  }}\langle \nabla [(u_{\lambda}^{(1)}+u_{\lambda}^{(2)})(r_{\lambda}^{(1)}x)], \nabla \widetilde{\xi}_{\lambda}\rangle  d\sigma
\\
&=-\delta \int_{\partial B_{\delta  }}\langle \nabla [(u_{\lambda}^{(1)}+u_{\lambda}^{(2)})(r_{\lambda}^{(1)}x)], \nabla \widetilde{\xi}_{\lambda}\rangle  d\sigma\\
&=-\delta \int_{\partial B_{\delta  }}
\big(2\nabla \eta_{0}+\nabla (\frac{w_{\lambda}^{(1)}}{g_{c^{(1)}_{\lambda}}}+\frac{w_{\lambda}^{(2)}}{g_{c^{(2)}_{\lambda}}}\big)
\nabla \Big(\tilde{\alpha}_{0}\frac{1-|x|^{2}}{1+|x|^{2}}+o_{\lambda}(1)\Big)   d\sigma\\
&=-64\pi\frac{\delta^{4}}{(1+\delta^{2})^{3}}\widetilde{\alpha}_{0}
-\frac{\widetilde{A}_{1}}{g_{c^{(1)}_{\lambda}}}\widetilde{\alpha}_{0}+o_{\lambda}\Big(\frac{1}{g_{c^{(1)}_{\lambda}}}\Big),
\end{split}
\end{equation}
where
$$
\widetilde{A}_{1}=2\delta\int_{\partial B_{\delta}}\nabla w_{0}\cdot \nabla \frac{1-|x|^{2}}{1+|x|^{2}} d\sigma.
$$
Similar to the proof of \eqref{5.5-4}, we have
\begin{equation}\label{tilde_D-equa}
\begin{split}
\big(r^{(1)}_\lambda\big)^2 E_\lambda\big(r^{(1)}_\lambda x \big)
&= \lambda \big(r^{(1)}_\lambda\big)^2 \int_{0}^{1}
e^{F_t(r^{(1)}_{\lambda}x)} dt
\\
&=\lambda \frac{8}{\lambda}e^{\eta^{1}_{\lambda}(x)}\Big(1+o_{\lambda}\Big(\frac{1}{g_{c^{(1)}_{\lambda}}}\Big)\Big)\\
&= 8e^{\eta_{0}(x)}\Big( 1+\frac{w_{0}}{g_{c^{(1)}_{\lambda}}}+o_{\lambda}\Big(\frac{1}{g_{c^{(1)}_{\lambda}}}\Big) \Big) ~ ~~~~~~~ \mbox{uniformly~on~compact~sets}
\end{split}
\end{equation}
and
\begin{equation}\label{10-10-1}
\begin{split}
\widetilde{F}_{\lambda}(r_{\lambda}^{(1)}x)
&=\int_{0}^{1}[tu_{\lambda}^{(1)}(r_{\lambda}^{(1)}x)+(1-t)u_{\lambda}^{(2)}(r_{\lambda}^{(1)}x)]dt\\
&=c_{\lambda}^{(1)}\int_{0}^{1}\Big(1+\frac{\eta_{\lambda}^{(1)}(x)}{c_{\lambda}^{(1)}}+o\big(\frac{1}{c_{\lambda}^{(1)}g_{c^{(1)}_{\lambda}}}\big)\Big)dt\\
&=c_{\lambda}^{(1)}\Big(1+\frac{\eta_{\lambda}^{(1)}(x)}{c_{\lambda}^{(1)}}
+o_{\lambda}\Big(\frac{1}{g_{c^{(1)}_{\lambda}}}\Big)\Big).
\end{split}
\end{equation}
Applying \eqref{10-10-1}, \eqref{5.5-add1} and Proposition \ref{prop_tildeeta}, we have
\begin{equation}\label{7-11-6.6-1}
\begin{split}
	&2\delta r_{\lambda}^{(1)}
\int_{\partial B_{\delta r^{(1)}_\lambda }}
\frac{\widetilde{F}_{\lambda}(x)\xi_{\lambda}(x)}{(1-|x|^{2})^{2}} d\sigma\\
&=2\delta (r_{\lambda}^{(1)})^{2}\int_{\partial B_{\delta  }}
\frac{\widetilde{F}_{\lambda}(r_{\lambda}^{(1)}x)
\widetilde{\xi}_{\lambda}(x)}{(1-|r_{\lambda}^{(1)}x|^{2})^{2}} d\sigma
\\
&=2\delta (r_{\lambda}^{(1)})^{2}c^{(1)}_{\lambda}
\int_{\partial B_{\delta }}
\Big(1+\frac{\eta_{\lambda}^{(1)}}{c^{(1)}_{\lambda}}
			+o_{\lambda}\Big(\frac{1}{c^{(1)}_{\lambda}g_{c^{(1)}_{\lambda}}}\Big)\Big)
			\Big(\widetilde{\alpha}_{0} \frac{1-|x|^2}{1+|x|^2}+o_{\lambda}(1)\Big) d\sigma
\\
&=\frac{1}{g_{c^{(1)}_{\lambda}}}\frac{4\pi \delta^{2}(1-\delta^{2})}{1+\delta^{2}}\widetilde{\alpha}_{0}+o_{\lambda}\Big(\frac{1}{g_{c^{(1)}_{\lambda}}}\Big)
=:\frac{\widetilde{A}_{2}}{g_{c^{(1)}_{\lambda}}}\widetilde{\alpha}_{0}+o_{\lambda}\Big(\frac{1}{g_{c^{(1)}_{\lambda}}}\Big)
\end{split}
\end{equation}
and
\begin{equation}\label{7-11-6.6-2}
\begin{split}
&4\int_{ B_{\delta r^{(1)}_\lambda }}
\frac{\widetilde{F}_{\lambda}(x)\xi_{\lambda}(x)(1+|x|^{2})}
{(1-|x|^{2})^{3}} dx
\\&=4 (r_{\lambda}^{(1)})^{2}c^{(1)}_{\lambda}\int_{ B_{\delta  }}\Big(1+\frac{\eta_{\lambda}^{(1)}}{c^{(1)}_{\lambda}}
+o_{\lambda}\Big(\frac{1}{c^{(1)}_{\lambda}g_{c^{(1)}_{\lambda}}}\Big)\Big)
\frac{1+|r_{\lambda}x|^{2}}{(1-|r_{\lambda}x|^{2})^{3}}\Big( \widetilde{\alpha}_{0} \frac{1-|x|^2}{1+|x|^2}+
o_{\lambda}(1)\Big)  dx
\\&=\frac{8\pi }{g_{c^{(1)}_{\lambda}}}\Big[\ln(1+\delta^{2})-\frac{\delta}{2}\Big]\widetilde{\alpha}_{0}+o_{\lambda}\Big(\frac{1}{g_{c^{(1)}_{\lambda}}}\Big)
:=\frac{\widetilde{A}_{3}}{g_{c^{(1)}_{\lambda}}}\widetilde{\alpha}_{0}+o_{\lambda}\Big(\frac{1}{g_{c^{(1)}_{\lambda}}}\Big).
\end{split}
\end{equation}
	
Also, applying \eqref{tilde_D-equa} and Proposition \ref{prop_tildeeta}, we have
\begin{equation}\label{5.9-add6}
\begin{split}
&2\delta r_{\lambda}^{(1)}\int_{\partial B_{\delta r^{(1)}_\lambda }}
E_{\lambda}(x)\xi_{\lambda}(x) d\sigma
\\
&=
2\delta (r_{\lambda}^{(1)})^{2}\int_{\partial B_{\delta  }}
E_{\lambda}(r_{\lambda}^{(1)}x)\widetilde{\xi}_{\lambda}(x)
d\sigma
\\&
=16\delta \int_{\partial B_{\delta }}
e^{\eta_{0}(x)}\Big(1+\frac{w_{0}}{g_{c^{(1)}_{\lambda}}}+o_{\lambda}\big(\frac{1}{c^{(1)}_{\lambda}g_{c^{(1)}_{\lambda}}}\big)\Big)
\Big(\widetilde{\alpha}_{0} \frac{1-|x|^2}{1+|x|^2}+o_{\lambda}(1)\Big) d\sigma
\\
&=\frac{32\pi \delta^{2}(1-\delta^{2}) }{(1+\delta^{2})^{3}}\widetilde{\alpha}_{0}
+\frac{\widetilde{\alpha}_{0}}{g_{c^{(1)}_{\lambda}}}\widetilde{A}_{4}+o_{\lambda}\Big(\frac{1}{g_{c^{(1)}_{\lambda}}}\Big),
\,\,\,\text{where}\,\,\widetilde{A}_{4}=16\delta\int_{\partial B_{\delta}}w_{0}\frac{1-|x|^{2}}{(1+|x|^{2})^{3}}d\sigma.
\end{split}
\end{equation}

Similar to \eqref{5.9-add6}, we also have
\begin{equation}\label{5.9-add7}
\begin{split}
&-4\int_{ B_{\delta r^{(1)}_\lambda }}
E_{\lambda}(x)\xi_{\lambda}(x) dx\\
&=-4 (r_{\lambda}^{(1)})^{2}\int_{ B_{\delta  }}
E_{\lambda}(r_{\lambda}^{(1)}x)\widetilde{\xi}_{\lambda}(x)
dx
\\
&=-32\int_{ B_{\delta }}
e^{\eta_{0}(x)}\Big(1+\frac{w_{0}}{g_{c^{(1)}_{\lambda}}}+o_{\lambda}\Big(\frac{1}{g_{c^{(1)}_{\lambda}}}\Big)\Big)\Big( \widetilde{\alpha}_{0} \frac{1-|x|^2}{1+|x|^2}+
			o_{\lambda}(1)\Big)dx
\\&=-\frac{32\pi \delta^{2}}{(1+\delta^{2})^{2}}
\widetilde{\alpha}_{0}+\frac{\widetilde{\alpha}_{0}}{g_{c^{(1)}_{\lambda}}}\widetilde{A}_{5}+o_{\lambda}\Big(\frac{1}{g_{c^{(1)}_{\lambda}}}\Big),\,\,\,\,\text{where}\,\,\,
\widetilde{A}_{5}=-32\int_{ B_{\delta}}w_{0}\frac{1-|x|^{2}}{(1+|x|^{2})^{3}}d\sigma.
\end{split}
\end{equation}

From \eqref{7-11-6.6-1} to \eqref{5.9-add7}, we have
\begin{equation}\label{5.9-add5}
\begin{split}
RHS~ \text{of} ~\eqref{p1_ueta}
&=-64\pi \frac{\delta^{4}}{(1+\delta^{2})^{3}}\widetilde{\alpha}_{0}
+\big(\widetilde{A}_{2}+\widetilde{A}_{3}+\widetilde{A}_{4}+\widetilde{A}_{5}\big)\frac{\widetilde{\alpha}_{0}}{g_{c^{(1)}_{\lambda}}}		+o_{\lambda}\Big(\frac{1}{g_{c^{(1)}_{\lambda}}}\Big).
\end{split}
\end{equation}
Noting that $\widetilde{A}_{1}+\widetilde{A}_{2}+\widetilde{A}_{3}+\widetilde{A}_{4}+\widetilde{A}_{5}\neq 0,$
\eqref{5.9-add4} and \eqref{5.9-add5} yield that
$\widetilde{\alpha}_{0}=0.$
\end{proof}

Now we are in a position to prove Proposition \ref{thuniq}.
\begin{proof}[\textbf{Proof of Proposition \ref{thuniq}}]
	Suppose $u^{(1)}_\lambda\not\equiv u^{(2)}_\lambda$ in $B_{\delta r^{(1)}_{\lambda}}$, and let $\xi_{\lambda}:=\frac{u^{(1)}_\lambda- u^{(2)}_\lambda}{
		\|u^{(1)}_\lambda-u^{(2)}_\lambda\|_{L^\infty{(B_{\delta r^{(1)}_{\lambda}})}}}$. We have
	\begin{equation}\label{5.9-add5-1}
		\|\xi_{\lambda}\|_{L^\infty{(B_{\delta r^{(1)}_{\lambda}})}}=1.
	\end{equation}
	Taking $\widetilde{\xi}_{\lambda}(x):=
	\xi_{\lambda}\big(r^{(1)}_{\lambda}x\big)$, by Propositions \ref{prop_tildeeta} and \ref{prop-A}, we have
	\begin{equation}\label{loc_o}
		\|\widetilde{\xi}_{\lambda}\|_{L^{\infty}(B_R)}=o_{\lambda}\big(1\big)~\mbox{for~any}~R>0,
	\end{equation}
	which implies that
	\begin{equation}\label{5.9-add5-2}
		\|\xi_{\lambda}\|_{L^\infty{(B_{\delta r^{(1)}_{\lambda}})}}=o_{\lambda}\big(1\big).
	\end{equation}
	We get a contradiction from \eqref{5.9-add5-1} and \eqref{5.9-add5-2}. Therefore, we infer that $u^{(1)}_\lambda\equiv u^{(2)}_\lambda$ in $B_{\delta r^{(1)}_{\lambda}}.$
\end{proof}

Finally we prove Theorem \ref{unique}.
\begin{proof}[\textbf{Proof of Theorem \ref{unique}}]
	By Proposition \ref{thuniq}, we obtain that $u^{(1)}_\lambda(0)= u^{(2)}_\lambda(0).$ Then applying Cauchy-initial uniqueness for ODE, we prove that $u^{(1)}_\lambda(x)\equiv u^{(2)}_\lambda(x)$ in $B_{1}.$
\end{proof}
\appendix

\section{Some known results and basic preliminaries}\label{sa}
\vskip 0.2cm
In this section, we will give some known results and some
preliminaries which are used in section 5.
Recall that
\begin{equation*}
-\Delta \eta_{0}(x)=8 e^{\eta_{0}(x)},\,\,\,\,x\in \R^{2},
\end{equation*}
where
\begin{equation}\label{elta-0}
\eta_{0}(x)=-2\ln(1+|x|^{2}).
\end{equation}
\[\]
\begin{lemma}\label{lem3.1}(Lemma 4.3, \cite{EG2004})
Let $\eta_{0}$ be the function defined in \eqref{elta-0} and $v\in C^2(\mathbb{R}^2)\cap L^{\infty}(\mathbb{R}^2)$ be a solution of the following problem
\begin{equation*}
\begin{cases}
-\Delta v=8e^{\eta_{0}}v  ~\mbox{in}~\mathbb{R}^2,\\[1mm]
\displaystyle\int_{\mathbb{R}^2}|\nabla v|^2dx<\infty.
\end{cases}
\end{equation*}
Then it holds
\begin{equation*}
v(x)= \alpha_0\frac{1-|x|^2}{1+|x|^2}+\sum^2_{i=1}{\alpha_i}\frac{x_i}{1+|x|^2}
\end{equation*}
with some $\alpha_0,\alpha_1,\alpha_2\in \mathbb{R}$.
\end{lemma}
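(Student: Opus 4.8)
The plan is to recognize $-\Delta v = 8e^{\eta_0}v$, with $\eta_0=-2\ln(1+|x|^2)$, as the linearization of the Liouville equation at the standard bubble, transplant it to the round sphere $S^2$ by inverse stereographic projection, where it becomes a Laplace--Beltrami eigenvalue problem, and then classify the solutions via spherical harmonics. First I would record the conformal bookkeeping: if $\pi: S^2\setminus\{N\}\to\mathbb{R}^2$ is stereographic projection, the round metric pulls back to $g_{S^2}=\big(\tfrac{2}{1+|x|^2}\big)^2 g_{\mathrm{eucl}}$, and in dimension two $-\Delta_{g_{S^2}}=\tfrac{(1+|x|^2)^2}{4}(-\Delta_{\mathrm{eucl}})$. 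Since $8e^{\eta_0}=\tfrac{8}{(1+|x|^2)^2}$, a function $v$ on $\mathbb{R}^2$ with $-\Delta v=8e^{\eta_0}v$ corresponds under $v\mapsto v\circ\pi$ to a function on $S^2\setminus\{N\}$ with $-\Delta_{g_{S^2}}v=2v$. The Dirichlet energy is conformally invariant in two dimensions, so $\int_{\mathbb{R}^2}|\nabla v|^2<\infty$ becomes finiteness of $\int_{S^2\setminus\{N\}}|\nabla_{S^2}v|^2$, and $v\in L^\infty(\mathbb{R}^2)$ becomes $v\in L^\infty(S^2)$.

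Next I would show that $v$ extends to a genuine smooth solution on all of $S^2$. Near $N$, $v$ is a bounded solution of $-\Delta_{g_{S^2}}v=2v$ on a punctured disk with smooth coefficients; subtracting a particular solution of $-\Delta_{g_{S^2}}w=2v$ leaves a bounded harmonic function on the punctured disk, which by the classical removable singularity theorem extends across $N$. Hence $v$ is a distributional, and then (by elliptic regularity) a smooth, solution on $S^2$, i.e.\ an eigenfunction of $-\Delta_{S^2}$ with eigenvalue $2$. Since the spectrum of $-\Delta_{S^2}$ is $\{\ell(\ell+1):\ell\ge 0\}$, the value $2$ forces $\ell=1$, and the corresponding eigenspace is the three-dimensional space spanned by the restrictions to $S^2\subset\mathbb{R}^3$ of the linear coordinate functions $\xi_1,\xi_2,\xi_3$. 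Transplanting back by $\pi$, one has $\xi_3\circ\pi=-\tfrac{1-|x|^2}{1+|x|^2}$ and $\xi_i\circ\pi=\tfrac{2x_i}{1+|x|^2}$ for $i=1,2$, which yields exactly $v(x)=\alpha_0\tfrac{1-|x|^2}{1+|x|^2}+\sum_{i=1}^2\alpha_i\tfrac{x_i}{1+|x|^2}$ with $\alpha_0,\alpha_1,\alpha_2\in\mathbb{R}$.

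The step I expect to be the main obstacle is the removable-singularity/regularity argument at $N$: one must be sure that no solution which is merely bounded near $\infty$ in $\mathbb{R}^2$ (hence only controlled, not a priori smooth, at $N$) slips out of the classification, and that boundedness together with finite energy genuinely excludes the logarithmically growing second solution branch. For the application in Section~\ref{s7} only the radial case is needed, and there one can bypass $S^2$ entirely: in polar coordinates the equation reads $v''+\tfrac1r v'+\tfrac{8}{(1+r^2)^2}v=0$, it has the explicit solution $\tfrac{1-r^2}{1+r^2}$, and a reduction-of-order computation shows the second independent solution grows like $\ln r$ at infinity and is therefore ruled out by boundedness, leaving $v=\alpha_0\tfrac{1-|x|^2}{1+|x|^2}$. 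Expanding a general $v$ in angular Fourier modes $e^{\pm ik\theta}$ and running the same ODE analysis mode by mode (the modes $|k|\ge 2$ admitting no nontrivial bounded finite-energy solution, the mode $k=\pm1$ producing $\tfrac{x_i}{1+|x|^2}$) recovers the full statement without the sphere.
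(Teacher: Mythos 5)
The paper itself offers no proof of this lemma: it is imported verbatim from Lemma 4.3 of \cite{EG2004}, so there is no internal argument to compare against, and what you have written is a self-contained proof of the cited result along the standard lines. Your main route is correct: the conformal identity $-\Delta_{g_{S^2}}=\frac{(1+|x|^2)^2}{4}(-\Delta_{\mathrm{eucl}})$ turns the equation into $-\Delta_{S^2}v=2v$ on $S^2\setminus\{N\}$; boundedness gives removability of the singularity at $N$ (your subtraction of a particular solution plus the classical removable singularity theorem for bounded harmonic functions is fine, followed by elliptic regularity); and the eigenvalue $2=\ell(\ell+1)$ forces $\ell=1$, whose eigenspace consists of the restrictions of the linear coordinates, with pullbacks $-\frac{1-|x|^2}{1+|x|^2}$ and $\frac{2x_i}{1+|x|^2}$ (you should write $\xi_i\circ\pi^{-1}$ rather than $\xi_i\circ\pi$, a harmless notational slip). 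Note that in this route the finite Dirichlet energy is never used, so you actually prove a slightly stronger statement than the one quoted; what the citation buys the paper is brevity, while your argument makes the appendix self-contained, and your elementary ODE fallback is well adapted to how the lemma is actually used in Section \ref{s7}, where only the radial mode enters (Proposition \ref{prop_tildeeta} and Lemma \ref{lem-10-7-2}). The one place where you assert rather than prove is in that fallback: the claim that for Fourier modes $|k|\ge 2$ there is no nontrivial bounded finite-energy solution is not automatic, since the solution regular at the origin (growing like $r^{|k|}$ there) could a priori be the branch that decays like $r^{-|k|}$ at infinity; ruling this out requires a short extra argument (e.g. a Wronskian comparison with the explicit mode-one solution $\frac{r}{1+r^2}$). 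Since the spherical-harmonics argument is already complete, this omission does not affect the validity of your proof, but it should be supplied if the ODE route is the one you intend to keep.
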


\medskip

\begin{lemma}\label{lem-10-7-2}
If $c_{\lambda}\rightarrow\infty$ as $\lambda\rightarrow0,$ then we have $w_{\lambda}(r):=g_{c_{\lambda}}(\eta_{\lambda}(r)-\eta_{0}(r))\rightarrow w_{0}(r)=\frac{1}{ (1+r^2)}\big[\frac{5r^2-r^4}{4}-\frac{1}{2}(1-r^2) \int_1^{1+r^2} \frac{\ln s}{1-s} ds-(1+r^2)\ln (1+r^2)
\big]$
in $C_{\rm{loc}}^2(\mathbb{R}),$ where $g_{c_{\lambda}}=\frac{1}{r^{2}_{\lambda}c_{\lambda}}$ and $w_{0}$
is the unique solution to the ODE
\begin{equation}\label{10-7-5}
-\Delta w_{0}=8e^{\eta_{0}}w_{0}+1,\,\,\,w_{0}(0)=0,\,\,\,w'(0)=0.
\end{equation}
\end{lemma}

\begin{proof}
Notice that $\eta_{\lambda}$ satisfies
\begin{equation}\label{10-7-6}
-\Delta \eta_{\lambda}(x)=r^{2}_{\lambda}
\frac{\eta_{\lambda}(x)+c_{\lambda}}{(1-|r_{\lambda}x|^{2})^{2}}
+8e^{\eta_{\lambda}(x)}.
\end{equation}
Using \eqref{10-7-4} and \eqref{10-7-6}, we compute
\begin{equation}\nonumber
\begin{aligned}
			-\Delta w_{\lambda}
			={}&g_{c_{\lambda}}\frac{r_{\lambda}^2
(\eta_{\lambda}+c_{\lambda})}{(1-r_{\lambda}^2|x|^2)^2}
+8e^{\eta_0}g_{c_{\lambda}}\left[e^{(\eta_{\lambda}-\eta_0)}-1\right].
		\end{aligned}
	\end{equation}
By Lemma \ref{lem-10-7-1} we know for every $R>0$, $\eta_{\lambda}(r)-\eta_0(r)=\frac{w_{\lambda}}{g_{c_{\lambda}}}$ goes to zero as $\lambda\to 0$ uniformly for $r\in[0,R]$. By using a Taylor expansion:
\begin{equation}\nonumber	e^{(\eta_{\lambda}-\eta_0)}=1+\frac{w_{\lambda}}{g_{c_{\lambda}}}
+o_{\lambda}(1)\frac{w_{\lambda}}{g_{c_{\lambda}}},
\end{equation}
with $o_{\lambda}(1)\to 0$ as $\lambda\to 0 $ uniformly for $r\in[0,R]$, from Lemma \ref{lem-10-7-3} we get
	\begin{equation}\label{eq10}
		-\Delta w_{\lambda}=1+8e^{\eta_0}(w_{\lambda}+o_{\lambda}(1)w_{\lambda}),
	\end{equation}
	with $o_{\lambda}(1)\to 0$ as $\lambda\to 0$ uniformly for $r\in[0,R]$. Observing that $w_{\lambda}(0)=w_{\lambda}'(0)=0$, from ODE theory it follows that $w_{\lambda}(r)$ is uniformly bounded in $r\in[0,R]$ and by elliptic estimates we have $w_{\lambda}\to w^\ast$ in $C_{\rm{loc}}^2(\mathbb{R}^2)$, where $w^\ast$ satisfies
$$
-\Delta w^\ast=1+e^{8\eta_{0}}w^\ast.
$$
By direct computations , we can check that $w_{0}(r)=\frac{1}{ (1+r^2)}\big[\frac{5r^2-r^4}{4}-\frac{1}{2}(1-r^2) \int_1^{1+r^2} \frac{\ln s}{1-s} ds-(1+r^2)\ln (1+r^2)
\big]$
is a solution of $-w''_{0}-\frac{1}{r}w'_{0}=8e^{\eta_{0}}w_{0}+1$ satisfying $w_{0}(0)=w_{0}'(0)=0.$
By the Cauchy-initial uniqueness for ODE, we know that $w^\ast=w_{0}(r).$
\end{proof}


Similar to Lemma 4.2 in \cite{LW}, we will establish a priori estimate on $\eta_{\lambda}.$
\begin{lemma}\label{lem2.4}
For any $\epsilon \in (0,2)$, there exist $R_{\epsilon}>1$, $C_{\epsilon}>0$ and $\lambda_{\epsilon} >0$ such that for any $\lambda\in (0,\lambda_{\epsilon})$
\begin{equation}\label{2.4.1}
\eta_{\lambda}(y)\leq \big(4-\epsilon\big)\ln \frac{1}{|y|}+C_{\epsilon} ,
\,\, ~\mbox{for}~ y\in B_{\frac{1}{r_{\lambda}}} \backslash B_{2R_\epsilon}.
\end{equation}
\end{lemma}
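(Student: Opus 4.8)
The plan is to exploit the radial symmetry of $u_\lambda$ (Proposition~\ref{pro1}): since $u_\lambda$ is radial so is $\eta_\lambda$, which turns \eqref{10-7-3} into a one–dimensional ODE whose monotone ``mass function'' is controlled by the sharp value $\int_{\mathbb{R}^2}8e^{\eta_0}dx=8\pi$ established in \eqref{10-7-2}. Writing $\eta_\lambda=\eta_\lambda(r)$ and using $\Delta=\frac{1}{r}\frac{d}{dr}\big(r\frac{d}{dr}\big)$, equation \eqref{10-7-3} becomes $-\big(r\eta_\lambda'(r)\big)'=r\Big[r_\lambda^2\frac{\eta_\lambda(r)+c_\lambda}{(1-r_\lambda^2r^2)^2}+8e^{\eta_\lambda(r)}\Big]$ on $(0,1/r_\lambda)$; integrating from $0$ and using $\lim_{r\to0}r\eta_\lambda'(r)=0$ gives $r\eta_\lambda'(r)=-m_\lambda(r)$, where $m_\lambda(r):=\frac{1}{2\pi}\int_{B_r}\Big(r_\lambda^2\frac{\eta_\lambda+c_\lambda}{(1-|r_\lambda x|^2)^2}+8e^{\eta_\lambda}\Big)dx$ is finite for every $r<1/r_\lambda$ since $\eta_\lambda\in C^\infty(B_{1/r_\lambda})$. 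The key observation is that $\eta_\lambda(x)+c_\lambda=u_\lambda(r_\lambda x)>0$ by Proposition~\ref{pro1}, so the integrand is non-negative; hence $m_\lambda$ is non-decreasing, $\eta_\lambda$ is non-increasing in $r$, and $m_\lambda(r)\ge\frac{1}{2\pi}\int_{B_r}8e^{\eta_\lambda}dx$.

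Now fix $R_\varepsilon>1$ large enough that $\frac{1}{2\pi}\int_{B_{R_\varepsilon}}8e^{\eta_0}dx>4-\frac{\varepsilon}{4}$, which is possible because $\frac{1}{2\pi}\int_{\mathbb{R}^2}8e^{\eta_0}dx=4$ by \eqref{10-7-2}. By the $C^1_{loc}$–convergence $\eta_\lambda\to\eta_0$ from \eqref{10-7-1} (together with $e^{\eta_\lambda}\le1$), there is $\lambda_\varepsilon>0$ so that for all $\lambda\in(0,\lambda_\varepsilon)$ one has both $\frac{1}{2\pi}\int_{B_{R_\varepsilon}}8e^{\eta_\lambda}dx>4-\frac{\varepsilon}{2}$ and $|\eta_\lambda(R_\varepsilon)|\le|\eta_0(R_\varepsilon)|+1$. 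Combining with the monotonicity of $m_\lambda$, we obtain $m_\lambda(r)\ge m_\lambda(R_\varepsilon)\ge4-\frac{\varepsilon}{2}$ for every $r\in[R_\varepsilon,1/r_\lambda)$.

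Finally, integrating $\eta_\lambda'(r)=-m_\lambda(r)/r$ from $R_\varepsilon$ to $|y|$, for any $|y|\in[2R_\varepsilon,1/r_\lambda)$, yields $\eta_\lambda(|y|)=\eta_\lambda(R_\varepsilon)-\int_{R_\varepsilon}^{|y|}\frac{m_\lambda(r)}{r}\,dr\le\eta_\lambda(R_\varepsilon)-\big(4-\frac{\varepsilon}{2}\big)\log\frac{|y|}{R_\varepsilon}$. Since $\eta_\lambda(R_\varepsilon)$ is bounded uniformly in $\lambda\in(0,\lambda_\varepsilon)$ and $|y|\ge1$ forces $-\frac{\varepsilon}{2}\log|y|\le0$, this rearranges to $\eta_\lambda(y)\le(4-\varepsilon)\log\frac{1}{|y|}+C_\varepsilon$ with $C_\varepsilon:=|\eta_0(R_\varepsilon)|+1+\big(4-\frac{\varepsilon}{2}\big)\log R_\varepsilon$, which is \eqref{2.4.1}; equivalently $e^{\eta_\lambda(y)}\le\widetilde{C}_\varepsilon|y|^{-(4-\varepsilon)}$ on $B_{1/r_\lambda}\backslash B_{2R_\varepsilon}$ with $\widetilde{C}_\varepsilon:=e^{C_\varepsilon}$. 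The only point that might look delicate is that the estimate must persist up to the rescaled boundary $r\sim1/r_\lambda$, where $(1-|r_\lambda x|^2)^{-2}$ degenerates; but this causes no trouble here, because the argument uses only a \emph{lower} bound on the monotone quantity $m_\lambda$. In fact the singular Hardy potential helps rather than hinders: positivity of $u_\lambda$ makes it contribute additional non-negative mass, so the genuine input is merely the sharp identity $\int_{\mathbb{R}^2}8e^{\eta_0}dx=8\pi$ combined with radial symmetry.
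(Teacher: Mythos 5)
Your proof is correct, but it follows a genuinely different route from the paper. You exploit the radial symmetry and positivity of $u_\lambda$ (Proposition \ref{pro1}) to reduce \eqref{10-7-3} to a radial ODE, introduce the monotone mass function $m_\lambda(r)=-r\eta_\lambda'(r)$, bound it from below by $\frac{1}{2\pi}\int_{B_{R_\varepsilon}}8e^{\eta_\lambda}\,dx\ge 4-\frac{\varepsilon}{2}$ using the local convergence $\eta_\lambda\to\eta_0$ of Lemma \ref{lem-10-7-1} and $\int_{\mathbb{R}^2}8e^{\eta_0}dx=8\pi$, and integrate $\eta_\lambda'=-m_\lambda/r$ outward; the Hardy term is simply discarded thanks to $u_\lambda>0$, and the weakening from exponent $4-\frac{\varepsilon}{2}$ to $4-\varepsilon$ is legitimate since $\log|y|>0$ on the region considered. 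The paper instead proves the estimate through the Green representation: it writes $\eta_\lambda(y)$ as a difference of two Green-function integrals, uses the expansion \eqref{green estimate1}, and splits the logarithmic kernel into the pieces $I_1,\dots,I_5$ of \eqref{2.4-2}, estimating the concentrated part via the same mass lower bound \eqref{2.4-1} and the near-diagonal singularity via H\"older. The trade-off: your argument is more elementary and bypasses the kernel splitting and the estimates on the regular part of the Green's function, but it hinges essentially on the solution being radial about the blow-up point and positive, so it would not survive in a non-symmetric setting, whereas the paper's representation-formula approach does not use symmetry at all and is the one that extends to general blow-up points. Two small points you should make explicit: $\lim_{r\to0}r\eta_\lambda'(r)=0$ requires interior smoothness of $u_\lambda$ (which holds since the Hardy potential is smooth inside $B_1$ and $e^{u_\lambda}\in L^p_{loc}$ by Lemma \ref{B-M}), and the finiteness of $m_\lambda(r)$ for $r<1/r_\lambda$ is indeed unproblematic because you only integrate up to $|y|<1/r_\lambda$, strictly away from where $(1-|r_\lambda x|^2)^{-2}$ degenerates.
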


\begin{proof}
Given $\epsilon>0$, we can choose $R_{\epsilon}>1$ such that

\begin{equation*}
\int_{B_{R_{\epsilon}}}8 e^{\eta_{0}(z)}dz>(8-\epsilon)\pi.
\end{equation*}
\[\]
Then by Fatou's lemma and $\int_{\R^{2}}8 e^{\eta_{0}(z)}\,dz=8\pi,$ we have

\begin{equation*}
\liminf_{\lambda \to 0}
\int_{B_{R_{\epsilon}}} 8 e^{\eta_{\lambda}(z)}\,dz\geq \int_{B_{R_{\epsilon}}}8 e^{\eta_{0}(z)}\,dz,
\end{equation*}
\[\]
which indicates that there exists $\lambda_\epsilon>0$ sufficiently small such that for any $\lambda\in(0,\lambda_\epsilon)$, it holds
\begin{equation}\label{2.4-1}
\int_{B_{R_{\epsilon}}}8 e^{\eta_{\lambda}(z)}\,dz > (8-\epsilon)\pi.
\end{equation}
\[\]
Also by the Green's representation theorem, we can write

\begin{equation*}
\begin{split}
u_\lambda\big(r_\lambda y\big)= \lambda \int_{B_{1}} G\big(\theta_\lambda y,x\big)
 e^{u_{\lambda}(x)}\,dx=8
 \int_{B_{\frac{1}{r_{\lambda}}}}G\big(r_\lambda y,r_\lambda z\big)
e^{\eta_\lambda(z)}dz.
\end{split}
\end{equation*}
Hence, using \eqref{10-7-1}, we find
\begin{align*}
\eta_{\lambda}(y) 
=& -u_\lambda(0)+8 \int_{B_{\frac{1}{r_{\lambda}}}} G(r_\lambda y,r_\lambda z) e^{\eta_\lambda(x)} \,dz \\
=& -8 \int_{B_{\frac{1}{r_{\lambda}}}} G(0,r_{\lambda}z) e^{\eta_\lambda(z)} \,dz
 +8\int_{B_{\frac{1}{r_{\lambda}}}} G\big(r_\lambda y,r_\lambda z\big) e^{\eta_\lambda(z)} \,dz.
\end{align*}
Now from \eqref{green estimate1} we can compute $\eta_\lambda(y)$ as follows:
\begin{align}\label{2.4-2}
\eta_{\lambda}(y)=&\frac{1}{2\pi} \int_{B_{\frac{1}{r_{\lambda}}}}
\ln \frac{|z|}{|z-y|} 8 e^{\eta_\lambda(z)} \,dz +8\int_{B_{\frac{1}{r_{\lambda}}}}[C(r_\lambda y,r_\lambda z)-C(0,r_\lambda z)]\ e^{\eta_\lambda(z)} \,dz\notag
\\=&
\frac{1}{2\pi}\int_{B_{R_\e}}\ln \frac{|z|}{|z-y|} 8 e^{\eta_\lambda(z)} \,dz \notag\\&
 +\frac{1}{2\pi}\int_{B_{\frac{1}{r_{\lambda}}}\backslash B_{R_{\epsilon}} \bigcap \{|z|\leq 2|z-y|\} }\ln \frac{|z|}{|z-y|} 8 e^{\eta_\lambda(z)} \,dz \,dz
\notag\\&
+\frac{1}{2\pi} \int_{B_{\frac{1}{r_{\lambda}}}\backslash B_{R_{\epsilon}} \bigcap \{|z|\geq 2|z-y|\} } \ln  |z|
8 e^{\eta_\lambda(z)} \,dz
\notag\\&
+\frac{1}{2\pi} \int_{B_{\frac{1}{r_{\lambda}}}\backslash B_{R_{\epsilon}}
\bigcap \{|z|\geq 2|z-y|\} } \ln \frac{1}{|z-y|} 8 e^{\eta_\lambda(z)} \,dz
\notag\\&+8\int_{B_{\frac{1}{r_{\lambda}}}}[C(r_\lambda y,r_\lambda z)-C(0,r_\lambda z)]\ e^{\eta_\lambda(z)} \,dz
 \notag\\[1mm]=:& I_1+I_2+I_3+I_4+I_{5}.
\end{align}
\medskip

Firstly, we estimate $I_1$. If $|z|\leq R_{\epsilon}$, then for any $y\in \Omega_\lambda \backslash B_{2R_{\epsilon}}$, we have
$2|z|\leq |y|$ and

\begin{equation*}
\frac{|z|}{|z-y|} \leq \frac{|z|}{|y|-|z|} \leq \frac{|z|}{|y|-\frac{|y|}{2}}\leq \frac{2R_{\epsilon}}{|y|}.
\end{equation*}
Therefore, it holds

\begin{equation*}
I_1\leq
\frac{1}{2\pi}\ln \frac{2R_{\epsilon}}{|y|} \int_{B_{R_{\epsilon}}}8
e^{\eta_\lambda(z)}dz,
\end{equation*}
which together with \eqref{2.4-1} indicates that

\begin{equation}\label{2.4-4}
I_1\leq \Big(4-\frac{\epsilon}{2}\Big) \ln \frac{1}{|y|} + C_1(\epsilon),
\end{equation}
where $C_1(\epsilon)>0$ is a constant dependent on $\epsilon.$

\vskip 0.2cm

\medskip

Next, we estimate the term $I_2$. If $|z|\leq 2|z-y|$, then $\ln \frac{|z|}{|z-y|}\leq
\ln 2$. Hence, from \eqref{2.4-1} and \eqref{10-7-1}, we calculate that

\begin{equation}\label{2.4-5}
I_2\leq \frac{\ln2}{2\pi} \int_{B_{\frac{1}{r_{\lambda}}}\backslash B_{R_{\epsilon}} }
 8 e^{\eta_\lambda(z)}\,dz \leq C_2(\epsilon) ,
\end{equation}
\[\]
where $C_2(\epsilon)>0$ is a constant dependent on $\epsilon$. Similarly, if $|z|\geq 2|z-y|$, we have  $|z|\leq 2|y|$ and

\begin{equation}\label{2.4-6}
I_3\leq \frac{\ln |2y|}{2\pi} \int_{\Omega_\lambda\backslash B_{R_{\epsilon}}}8 e^{\eta_\lambda(z)}\,dz
 \leq \frac{\epsilon}{2} \ln |y| + C_3(\epsilon),
\end{equation}
\[\]
where $C_3(\epsilon)>0$ is a constant dependent on $\epsilon.$
\medskip

Finally, we know that

\begin{equation*}
\begin{split}
& \Omega_\lambda\backslash B_{R_{\epsilon}} \cap \big\{ |z| \geq 2|z-y| \big\} \\
=&~ \Big( \Omega_\lambda\backslash B_{R_{\epsilon}}\cap \big\{2\leq 2|z-y|\leq |z|\big\} \Big) \bigcup
\Big( \Omega_\lambda\backslash B_{R_{\epsilon}} \cap \big\{ 2|z-y|\leq |z| \leq 2 \big\} \Big)  \\
&~ \bigcup \Big( \Omega_\lambda\backslash B_{R_{\epsilon}} \cap \big\{ 2|z-y| \leq 2 \leq |z|\big\} \Big),
\end{split}
\end{equation*}
\[\]
then by H\"older's inequality, for any fixed $\alpha$ (closed to $1^+$), we compute $I_4$:

\begin{equation}\label{2.4-7}
\begin{split}
I_4 =&~
\frac{1}{2\pi} \int_{\Omega_\lambda\backslash B_{R_{\epsilon}}\bigcap \{2|z-y|\leq 2 \leq |z|\}} \ln \frac{1}{|z-y|}
 8e^{\eta_\lambda(z)}\,dz \\
&~ +\frac{1}{2\pi} \int_{\Omega_\lambda\backslash B_{R_{\epsilon}} \bigcap \{2|z-y|\leq |z| \leq 2\}} \ln \frac{1}{|z-y|}
 8e^{\eta_\lambda(z)} \,dz \\
&~ +\frac{1}{2\pi} \int_{\Omega_\lambda\backslash B_{R_{\epsilon}} \bigcap \{2\leq 2|z-y|\leq |z|\} } \ln \frac{1}{|z-y|}
 8e^{\eta_\lambda(z)} \,dz \\
\leq &~ \frac{1}{2\pi} \int_{\Omega_\lambda\backslash B_{R_{\epsilon}}\bigcap \{|z-y|\leq 1\}} \ln \frac{1}{|z-y|}
e^{8\eta_\lambda(z)} \,dz \\
\leq &~ \frac{1}{2\pi} \bigg(\int_{|z-y|\leq 1} \Big| \ln \frac{1}{|z-y|} \Big|^{\frac{\alpha}{\alpha-1}}  \,dz\bigg)^{\frac{\alpha-1}{\alpha}} \cdot \Bigg(\int_{\Omega_\lambda\backslash B_{R_{\epsilon}}} \bigg|8 e^{\eta_\lambda(z)} \bigg|^{\alpha} \,dz\Bigg)^{\frac{1}{\alpha}} \\ \leq &~ C_4(\epsilon),
\end{split}
\end{equation}
\[\]
here we use \eqref{2.4-1} and the fact that
\[
\ln \frac{1}{|z-y|} \leq0,~\mbox{in}~\big\{z\in \Omega_\lambda\backslash B_{R_{\epsilon}}: 2\leq 2|z-y|\leq |z|\big\}.
\]
\[\]
Finally, we estimate

\begin{equation}\label{10-10-2}
\begin{split}
I_5 &\leq C \int_{B_{\frac{1}{r_{\lambda}}}}e^{\eta_{\lambda}(z)}\,dz\leq C.
\end{split}
\end{equation}
Substituting \eqref{2.4-4}, \eqref{2.4-5}, \eqref{2.4-6}, \eqref{2.4-7}  and \eqref{10-10-2} into \eqref{2.4-2}, we obtain that for any
$ \lambda \in (0,\lambda_\epsilon)$ and $y \in \Omega_\lambda \backslash B_{2R_\epsilon}$, there exists $C_\epsilon>0$ such that
\begin{eqnarray*}
w_\lambda(y) & \leq& \big(4-\frac{\epsilon}{2} \big) \ln \frac{1}{|y|} + \frac{\epsilon}{2} \ln |y| + C_1(\epsilon)+C_2(\epsilon)+C_3(\epsilon)+C_4(\epsilon)+C \\
&\leq& (4-\epsilon) \ln \frac{1}{|y|} + C_\epsilon.
\end{eqnarray*}

\end{proof}

Just by the same arguments as those \cite{DT,mm}, we can prove the following result.
\begin{lemma}\label{add-lem1-12-12}
Let $0\leq s \leq s_{\lambda}\leq r^{-\theta}_{\lambda},$ $0<\theta<1$ being small enough
and $\phi:[s,s_{\lambda}]\rightarrow \R$ be given so that $\phi=o(g^{3}_{c_{\lambda}})$
uniformly on $[s,s_{\lambda}].$ Set
$$
\eta:=\eta_{0}+\frac{w_{0}}{g_{c_{\lambda}}}+\frac{h_{0}}{g^{2}_{c_{\lambda}}}+\frac{\phi}{g^{3}_{c_{\lambda}}}
$$
and
\begin{equation}\label{lambda_gamma-12-12}
\Phi_{\lambda}(r,\phi)=:g^{3}_{c_{\lambda}}\Big[\frac{r^{2}_{\lambda}(\eta_{\lambda}+c_{\lambda})}{(1-|r_{\lambda}x|^{2})^{2}}
+8e^{\eta_{\lambda}}+\Delta \eta_{0}+\frac{\Delta w_{0}}{g_{c_{\lambda}}}+\frac{\Delta h_{0}}{g^{2}_{c_{\lambda}}}\Big].
\end{equation}
Then
$$
\Phi_{\lambda}(r,\phi)=8e^{\eta_{0}}\big(\phi+o(1)\phi+O(g^{-1}_{c_{\lambda}}\xi)\phi+O(\xi^{2})\big),\,\,\,
\text{uniformly~for}\,\,r\in [s,s_{\lambda}],
$$
where $\xi(r)=1+r^{2}$ and
$h_{0}$ satisfies
$
-\Delta h_{0}=8e^{\eta_{0}}(w^{2}_{0}+h_{0}),\,\,\,h_{0}(0)=0,\,\,\,h_{0}'(0)=0.
$
\end{lemma}

Applying Lemma \ref{add-lem1-12-12}, we can obtain some estimates.
\begin{lemma}\label{add-lem1-12-13}
There exist $M>0$ and $T>0$ such that
$$
\eta_{\lambda}=\eta_{0}+\frac{w_{0}}{g_{c_{\lambda}}}+\frac{h_{0}}{g^{2}_{c_{\lambda}}}+\frac{\phi_{\lambda}}{g^{3}_{c_{\lambda}}}
$$
with
\begin{equation}\label{lambda_gamma-12-12-1}
|\phi_{\lambda}(r)|\leq M \xi(r),\,\,\text{for}\,\,r\in [0,r^{-\theta}_{\lambda}],\,\,\,|\phi'_{\lambda}(r)|\leq Mr\,\,\,
\text{for}\,\,\,r\in [T,r^{-\theta}_{\lambda}]
\end{equation}
for $\lambda$ (depending on M and T), where $\xi(r)$ is as in Lemma \ref{add-lem1-12-12}.
\end{lemma}

Now we estimate the maximum value $c_{\lambda}$ of the solution $u_{\lambda}.$

\begin{lemma}\label{add-lem1}
There holds

\begin{equation}\label{lambda_gamma}
c_\lambda= A-2\Big(1+\frac{\widetilde{A}}{g_{c_{\lambda}}}\Big)\ln \lambda +\frac{B}{g_{c_{\lambda}}}+o_{\lambda}\Big(\frac{1}{g_{c_{\lambda}}}\Big),
\end{equation}
where $A$ and $B$ are constants independent of $\lambda$ and  $o_{\lambda}(1)$ denotes a quality which goes to zero as $\lambda\rightarrow 0.$
\end{lemma}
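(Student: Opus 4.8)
The plan is to read off $c_\lambda$ from the Green's representation $c_\lambda=u_\lambda(0)=\int_{B_1}G(0,y)\,\lambda e^{u_\lambda(y)}\,dy$, peel off the logarithmic singularity of $G$, and use the \emph{exact} normalization $\lambda r_\lambda^2 e^{c_\lambda}=8$ to turn the representation into an algebraic relation for $c_\lambda$. Write $G(0,y)=-\frac{1}{2\pi}\log|y|+H(0,y)$; by \eqref{green estimate1} the regular part $H(0,\cdot)$ is continuous near the origin, with value $\gamma:=H(0,0)$ (the Robin constant at $0$), and near $\partial B_1$ it behaves like $\frac{1}{4\pi}\log(1-|y|^2)$, so in particular $H(0,\cdot)\in L^1(B_1)$. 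Substituting $y=r_\lambda z$ and $\lambda r_\lambda^2 e^{u_\lambda(r_\lambda z)}=8e^{\eta_\lambda(z)}$, and setting $m_\lambda:=\lambda\int_{B_1}e^{u_\lambda}\,dx$, one gets
\begin{equation*}
c_\lambda=-\frac{\log r_\lambda}{2\pi}\,m_\lambda-\frac{1}{2\pi}\int_{B_{1/r_\lambda}}\log|z|\,8e^{\eta_\lambda}\,dz+\int_{B_{1/r_\lambda}}H(0,r_\lambda z)\,8e^{\eta_\lambda}\,dz ,
\end{equation*}
and since $\log r_\lambda=\frac{1}{2}(\log 8-\log\lambda-c_\lambda)$ this is equivalent to
\begin{equation*}
c_\lambda\,(4\pi-m_\lambda)=m_\lambda(\log\lambda-\log 8)-2\int_{B_{1/r_\lambda}}\log|z|\,8e^{\eta_\lambda}\,dz+4\pi\int_{B_{1/r_\lambda}}H(0,r_\lambda z)\,8e^{\eta_\lambda}\,dz .
\end{equation*}

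With this identity in hand I would establish three facts. First, $\int_{B_{1/r_\lambda}}H(0,r_\lambda z)\,8e^{\eta_\lambda}\,dz=\int_{B_1}H(0,y)\lambda e^{u_\lambda}\,dy\to 8\pi\gamma$, using $\lambda e^{u_\lambda}\rightharpoonup 8\pi\delta_0$, the concentration of all the mass at $0$, the continuity of $H(0,\cdot)$ at the origin, and $H(0,\cdot)\in L^1(B_1)$ together with the bound $\lambda e^{u_\lambda}\le\lambda e^{C}$ away from $0$ to kill the boundary contribution. Second, $\int_{B_{1/r_\lambda}}\log|z|\,8e^{\eta_\lambda}\,dz\to\int_{\mathbb{R}^2}\log|z|\,8e^{\eta_0}\,dz=0$: Lemma \ref{lem2.4} (with $\epsilon$ small) supplies a $\lambda$-independent integrable majorant $|\log|z||\,|z|^{-(4-\epsilon)}$ on $\{|z|>2R_\epsilon\}$, so dominated convergence applies together with the local convergence of Lemma \ref{lem-10-7-1}, while the limit vanishes by the elementary identity $\int_0^\infty(\log u)(1+u)^{-2}\,du=0$. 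Third — the crux — $m_\lambda=8\pi+o_\lambda(1/|\log\lambda|)$. Granting these, since $4\pi-m_\lambda\to-4\pi$ the identity gives $\frac{m_\lambda}{4\pi-m_\lambda}=-2+o_\lambda(1/|\log\lambda|)$, hence $\frac{m_\lambda\log\lambda}{4\pi-m_\lambda}=-2\log\lambda+o_\lambda(1)$, and dividing through yields $c_\lambda=-2\log\lambda+2\log 8-8\pi\gamma+o_\lambda(1)$, i.e. \eqref{lambda_gamma} with $A=2\log 8-8\pi\gamma$.

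The main obstacle is the third fact: the bare convergence $m_\lambda\to 8\pi$ (already obtained via the Pohozaev argument behind Lemma \ref{9th}) carries no rate, yet here one needs $(8\pi-m_\lambda)\log\lambda\to 0$. I would get this by a short bootstrap. Feeding only $m_\lambda\to 8\pi$ and the \emph{uniform} boundedness of the two integrals on the right of the identity (the $H$-integral is bounded because $H(0,\cdot)\in L^1(B_1)$; the $\log$-integral because Lemma \ref{lem2.4} with a \emph{fixed} $\epsilon$ gives $8e^{\eta_\lambda(z)}\le C|z|^{-(4-\epsilon)}$ for $|z|>2R_\epsilon$) already forces the crude sharp order $c_\lambda=-2\log\lambda+o_\lambda(|\log\lambda|)$, hence $r_\lambda^2=8\lambda^{-1}e^{-c_\lambda}=8\lambda^{1+o_\lambda(1)}$ and thus $r_\lambda^2 c_\lambda\le\lambda^{1/4}$ for $\lambda$ small. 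Next, a quantitative version of Lemma \ref{lem-10-7-2}: $w_\lambda:=\eta_\lambda-\eta_0$ is radial with $w_\lambda(0)=w_\lambda'(0)=0$ and $-\Delta w_\lambda-8e^{\eta_0}w_\lambda=r_\lambda^2\frac{\eta_\lambda+c_\lambda}{(1-|r_\lambda x|^2)^2}+8e^{\eta_0}(e^{w_\lambda}-1-w_\lambda)$, whose right-hand side is $O(r_\lambda^2 c_\lambda)$ on each ball $B_M$; since the radial kernel of $-\Delta-8e^{\eta_0}$ is one-dimensional and excluded by $w_\lambda(0)=0$, ODE/elliptic estimates give $\|w_\lambda\|_{C^0(B_M)}\le C(M)\,r_\lambda^2 c_\lambda$ with $C(M)$ of at most polynomial growth. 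Splitting $m_\lambda-8\pi$ at a radius $M=M_\lambda\to\infty$ chosen to balance the inner error $C(M)\,r_\lambda^2 c_\lambda$ against the tail errors $CM^{-(2-\epsilon)}+CM^{-2}$ (controlled by Lemma \ref{lem2.4} and by $\int_{|z|>M}8e^{\eta_0}\lesssim M^{-2}$) yields $|m_\lambda-8\pi|\le C\lambda^{\sigma}$ for some $\sigma>0$, far stronger than needed; the same splitting upgrades the second fact to $|\int_{B_{1/r_\lambda}}\log|z|\,8e^{\eta_\lambda}|\le C\lambda^{\sigma}$. (The first fact needs no rate, as it sits in the numerator over a denominator bounded away from $0$ and converges to a constant.) Inserting the three facts into the algebraic identity then gives \eqref{lambda_gamma}.
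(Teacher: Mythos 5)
Your proposal is correct and follows in essence the same route as the paper: Green's representation at the origin, splitting $G(0,y)$ into $-\frac{1}{2\pi}\log|y|$ plus a regular part, rescaling via the exact normalization $\lambda r_\lambda^2 e^{c_\lambda}=8$, the vanishing moment $\int_{\mathbb{R}^2}\log|z|\,e^{\eta_0}\,dz=0$, convergence of the regular-part integral to $8\pi H(0,0)$ (the paper's $8\pi\widetilde C(0)$), and then solving the resulting linear relation for $c_\lambda$; your constant $A=\log 64-8\pi H(0,0)$ agrees with the paper's. The genuine difference is your treatment of the point you call the crux: since the mass $m_\lambda$ multiplies $\log r_\lambda\sim|\log\lambda|$, concluding with an $o_\lambda(1)$ error really requires $(m_\lambda-8\pi)\log\lambda\to 0$, not merely $m_\lambda\to 8\pi$. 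The paper's proof writes the mass as $8\pi+o_\lambda(1)$, multiplies by $\log r_\lambda$ and absorbs the cross term into $o_\lambda(1)$ without comment, i.e.\ it implicitly uses exactly the rate you insist on. Your bootstrap (a crude order $c_\lambda=(2+o_\lambda(1))|\log\lambda|$ straight from the algebraic identity, hence $r_\lambda^2 c_\lambda\lesssim\lambda^{1/4}$; a quantified form of Lemma \ref{lem-10-7-2}, $\|\eta_\lambda-\eta_0\|_{C^0(B_M)}\le C(M)\,r_\lambda^2 c_\lambda$, via the radial linearized equation with $w_\lambda(0)=w_\lambda'(0)=0$; and a splitting at $M=M_\lambda$ using the uniform tail decay of Lemma \ref{lem2.4}) is a legitimate way to supply that rate, and indeed gives a polynomial rate, more than is needed; it also upgrades the $\log|z|$ moment estimate in the same stroke. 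The one step that would need to be written out carefully is the asserted bound $\|w_\lambda\|_{C^0(B_M)}\le C(M)r_\lambda^2 c_\lambda$ with $C(M)$ of at most polynomial growth: this is standard (the second radial homogeneous solution of $-\Delta-8e^{\eta_0}$ grows only logarithmically, so variation of parameters costs at most a power of $M$, and the quadratic remainder $8e^{\eta_0}(e^{w_\lambda}-1-w_\lambda)$ can be absorbed), but in the proposal it is claimed rather than proved. Modulo that, your argument is complete and, at this one point, more careful than the paper's own proof; a minor inaccuracy (harmless, since you only use $H(0,\cdot)\in L^1$ and continuity at $0$) is the description of $H(0,y)$ as logarithmically singular near $\partial B_1$ — in fact $G(0,y)\to 0$ there, so $H(0,\cdot)$ is bounded.
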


\begin{remark}\label{add-rem-1-22}
We would like to point out that in fact we can obtain a little rough estimate $c_\lambda= A-2\ln \lambda +o_{\lambda}(1).$  Furthermore, we can prove $r_{\lambda}=O\big(e^{-\frac{c_{\lambda}}{4}}\big).$
\end{remark}

\begin{remark}\label{add-rem-1-23}
Also, there holds that $\frac{1}{g_{c_{\lambda}}}=O\big(e^{-(\frac{1}{2}-\epsilon)c_{\lambda}}\big),$ where $\epsilon>0$ is small.
\end{remark}

\begin{proof}
By the Green's representation theorem, we have

\begin{equation}\label{4.11-1}
\begin{split}
c_\lambda &= u_\lambda(0)
=\lambda \int_{B_{1}} G(y,0)  e^{u_{\lambda}(y)} dy.
\end{split}
\end{equation}
By scaling and using the properties of Green's function \eqref{green estimate1}, we know

\begin{equation}\label{4.11-2-1}
\begin{split}
\lambda  \int_{B_{1}}  G(y,0) e^{u_{\lambda}(y)} dy
=&\lambda  \int_{B_{s_{\lambda}r_{\lambda}}}  G(y,0) e^{u_{\lambda}(y)} dy
+\int_{B_{1}\setminus B_{s_{\lambda}r_{\lambda}}}G(y,0) e^{u_{\lambda}(y)} dy\\
=&~\lambda r_\lambda^2 \int_{B_{s_{\lambda}}} G\big(r_{\lambda}z,0\big)
		 e^{u_\lambda(r_{\lambda}z)}  dz+o\Big(\frac{1}{g_{c_{\lambda}}}\Big) \\
		=&~8 \int_{B_{s_{\lambda}}} \Big(-\frac{1}{2\pi}\ln |r_{\lambda} z|+\widetilde{C}(r_\lambda z )\Big)e^{\eta_{\lambda}(z)}dz
+o\Big(\frac{1}{g_{c_{\lambda}}}\Big),
\end{split}
\end{equation}
\[\]
since by Lemma \ref{lem2.4} and Remark \ref{add-rem-1-22} we can estimate
\begin{equation}\label{4.11-2}
\begin{split}
&\int_{B_{1}\setminus B_{s_{\lambda}r_{\lambda}}}G(y,0) e^{u_{\lambda}(y)} dy
=~\lambda r_\lambda^2 \int_{B_{\frac{1}{r_{\lambda}}}\setminus B_{s_{\lambda}}} G\big(r_{\lambda}z,0\big)
		 e^{u_\lambda(r_{\lambda}z)}  dz\\
		=&~8 \int_{B_{\frac{1}{r_{\lambda}}}\setminus B_{s_{\lambda}}}G\big(r_{\lambda}z,0\big) e^{\eta_{\lambda}(z)}dz
=O\Big(\int_{B_{\frac{1}{r_{\lambda}}}\setminus B_{s_{\lambda}}}e^{(4-\epsilon)\ln \frac{1}{|y|}+C_{\epsilon}}\Big)\\
=&~O\big(r^{2-\epsilon}_{\lambda}\big)+O\Big(\frac{1}{g^{1+\epsilon}_{c_{\lambda}}}\Big)
=o\Big(\frac{1}{g_{c_{\lambda}}}\Big).
\end{split}
\end{equation}
\[\]
By the expansion of $\eta_{\lambda}(z)$ in $B_{s_{\lambda}}$,
we have
\begin{equation*}
\begin{split}
\int_{B_{s_{\lambda}}}8e^{\eta_{\lambda}(z)}\,dz
&=\int_{B_{s_{\lambda}}}8e^{\eta_{0}(z)+\frac{w_{0}(z)}{g_{c_{\lambda}}}+\frac{h_{\lambda}(z)}{g^{2}_{c_{\lambda}}}}\,dz\\
&=\int_{B_{s_{\lambda}}}8e^{\eta_{0}(z)}\,dz
+\int_{B_{s_{\lambda}}}8e^{\eta_{0}(z)}\Big(\frac{w_{0}(z)}{g_{c_{\lambda}}}+\frac{h_{\lambda}(z)}{g^{2}_{c_{\lambda}}}
+o\big(\frac{w_{0}(z)}{g_{c_{\lambda}}}+\frac{h_{\lambda}(z)}{g^{2}_{c_{\lambda}}}\big)\Big)\,dz\\
&=\int_{\mathbb{R}^{2}}8e^{\eta_{0}(z)}\,dz+\frac{8}{g_{c_{\lambda}}}\int_{\mathbb{R}^{2}}e^{\eta_{0}(z)}w_{0}(z)\,dz
+o_{\lambda}\Big(\frac{1}{g_{c_{\lambda}}}\Big)\\
&=8\pi+\frac{8\widetilde{A}}{g_{c_{\lambda}}}+o_{\lambda}\Big(\frac{1}{g_{c_{\lambda}}}\Big),\,\,\,\text{where}\,\,\,
\widetilde{A}=\int_{\R^{2}}e^{\eta_{0}(z)}w_{0}(z)\,dz.
\end{split}
\end{equation*}
\[\]

Similarly, one can derive that
$$
\int_{B_{s_{\lambda}}}\widetilde{C}(r_\lambda z)8e^{\eta_{\lambda}(z)}\,dz=8\pi \widetilde{C}(0)
+8\widetilde{C}(0)\frac{\widetilde{A}}{g_{c_{\lambda}}}+
+o_\lambda\Big(\frac{1}{g_{c_{\lambda}}}\Big).
$$
\[\]
Hence
\begin{equation*}
\begin{split}
\ln r_\lambda \int_{B_{s_{\lambda}}}8e^{\eta_{\lambda}(z)}\,dz
&=\ln r_\lambda \Big(8\pi+\frac{8\widetilde{A}}{g_{c_{\lambda}}}+o_{\lambda}\Big(\frac{1}{g_{c_{\lambda}}}\Big)\Big)\\
&=\ln\big(2\sqrt{2}(\lambda e^{c_{\lambda}})^{-\frac{1}{2}}\big)\Big(8\pi+\frac{8\widetilde{A}}{g_{c_{\lambda}}}+o_{\lambda}\Big(\frac{1}{g_{c_{\lambda}}}\Big)\Big)
\\
&=\Big(\ln 2\sqrt{2}-\frac{1}{2}\ln \lambda -\frac{1}{2}c_{\lambda}\Big)\Big(8\pi+\frac{8\widetilde{A}}{g_{c_{\lambda}}}+o_{\lambda}\Big(\frac{1}{g_{c_{\lambda}}}\Big)\Big).
\end{split}
\end{equation*}
\[\]
Similarly, it holds
\begin{equation*}
\begin{split}
\int_{B_{s_{\lambda}}}\ln |z| e^{\eta_{\lambda}(z)}\,dz
&=\int_{B_{s_{\lambda}}}\ln |z| e^{\eta_{0}(z)+\frac{w_{0}(z)}{g_{c_{\lambda}}}+\frac{h_{\lambda}(z)}{g^{2}_{c_{\lambda}}}}\,dz\\
&=\int_{\R^{2}}\ln |z| e^{\eta_{0}(z)}\,dz
+\frac{1}{g_{c_{\lambda}}}\int_{\R^{2}}e^{\eta_{0}(z)}\ln|z|w_{0}(z)dz+o_{\lambda}\Big(\frac{1}{g_{c_{\lambda}}}\Big)\\
&=\frac{\widetilde{B}}{g_{c_{\lambda}}}+o_{\lambda}\Big(\frac{1}{g_{c_{\lambda}}}\Big),
\end{split}
\end{equation*}
\[\]
where $\widetilde{B}=\int_{\R^{2}}e^{\eta_{0}(z)}\ln|z|w_{0}(z)dz$ 
and

	\begin{eqnarray*}
		\int_{\R^{2}}\ln |z| e^{\eta_{0}(z)}\,dz
&=&\int_{\R^{2}}\ln |z| e^{-2\ln(1+|z|^{2})}\,dz\\
&=&\int_{0}^{2\pi}d\theta \int_{0}^{+\infty}\frac{r\ln r}{(1+r^{2})^{2}}\,dr
\\
&=&-2\pi \int_{0}^{+\infty}\frac{\frac{1}{t}\ln \frac{1}{t}}{(1+\frac{1}{t})^{2}}\Big(-\frac{1}{t^{2}}\Big)\,dt\\
&=&-2\pi \int_{0}^{+\infty}\frac{t\ln t}{(1+t^{2})^{2}}\,dt,
\end{eqnarray*}
which implies that
$$
\int_{0}^{+\infty}\frac{t\ln t}{(1+t^{2})^{2}}\,dt=0.
$$	
	
Hence we have
\begin{equation}\label{4.11-4}
\begin{split}
& \lambda  \int_{B_{1}} G(y,0) e^{u_{\lambda}(y)}\, dy \\
=&-\frac{1}{2\pi}\Big[\big(\ln2\sqrt{2}-\frac{1}{2}\ln\lambda-\frac{1}{2}c_{\lambda}\big)
\Big(8\pi+\frac{8\widetilde{A}}{g_{c_{\lambda}}}+o_{\lambda}\Big(\frac{1}{g_{c_{\lambda}}}\Big) \Big)+8\frac{\widetilde{B}}{g_{c_{\lambda}}}
+o_{\lambda}\Big(\frac{1}{g_{c_{\lambda}}}\Big)\Big]
\\
&\quad+8\pi \widetilde{C}(0)
+8\widetilde{C}(0)\frac{\widetilde{A}}{g_{c_{\lambda}}}+
+o_\lambda\Big(\frac{1}{g_{c_{\lambda}}}\Big).
\end{split}
\end{equation}
\[\]
Combining \eqref{4.11-2} and \eqref{4.11-4}, we have
\begin{equation}\label{4.11-6}
c_{\lambda}
=A-2\Big(1+\frac{\widetilde{A}}{\pi g_{c_{\lambda}}}\Big)\ln \lambda+\frac{B}{g_{c_{\lambda}}}+o_\lambda\Big(\frac{1}{g_{c_{\lambda}}}\Big),
\end{equation}
where
$$
A=\ln 64-8\pi \widetilde{C}(0)\,\,\text{and}\,\,\,B=\frac{\widetilde{A}}{\pi}\ln 64+\frac{4}{\pi}\widetilde{B}-8 \widetilde{C}(0)\widetilde{A}.
$$
\end{proof}

Recall that 0 is the local maximum point of the solution $u_\lambda$.
Let us define the following quadratic form
\begin{equation}\label{P}
	\begin{split}
		P(u,v):=&- 2d\int_{\partial B_d}\big\langle \nabla u ,\nu\big\rangle \big\langle \nabla v,\nu\big\rangle  d\sigma + d \int_{\partial B_d} \big\langle \nabla u , \nabla v \big\rangle d\sigma,
	\end{split}
\end{equation}
where $u,v\in C^{2}(\overline{B_{1}})$, $d>0$ is a small constant such that $B_{d}\subset B_{1}$ and $ \nu= ( \nu_1, \nu_2)$ is the unit outward normal of $\partial B_d$.
\vskip 0.2cm

By Lemma 2.4 in \cite{LPP-2022}, we have the following property about the above quadratic form.
\begin{lemma}\label{indep_d}
	If $u$ and $v$ are harmonic in $ B_d\backslash \{0\}$, then $P(u,v)$ is independent of $\theta \in (0,d].$
\end{lemma}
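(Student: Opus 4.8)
The plan is to identify $P(u,v)$, viewed as a function of the radius $d$, with (minus) the flux across $\partial B_d$ of a vector field built from $u$ and $v$ that is divergence free wherever $u$ and $v$ are harmonic; the assertion then follows by applying the divergence theorem on an annulus. This is the bilinear, two-dimensional version of the classical Pohozaev/Rellich flux identity for harmonic functions (cf.\ Lemma 2.4 in \cite{LPP-2022}).

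First I would introduce, on the set where $u$ and $v$ are harmonic, the vector field
\begin{equation*}
W_j:=\langle x,\nabla u\rangle\,\partial_j v+\langle x,\nabla v\rangle\,\partial_j u-x_j\,\langle\nabla u,\nabla v\rangle,\qquad j=1,2,
\end{equation*}
and check by direct differentiation that $\operatorname{div}W=0$ there. Indeed, computing $\sum_j\partial_j\big(\langle x,\nabla u\rangle\,\partial_j v\big)$ gives $\langle\nabla u,\nabla v\rangle+x_k\,\partial_k\partial_j u\,\partial_j v+\langle x,\nabla u\rangle\,\Delta v$, with the symmetric contribution coming from $\langle x,\nabla v\rangle\,\partial_j u$, while $\sum_j\partial_j\big(x_j\langle\nabla u,\nabla v\rangle\big)=2\langle\nabla u,\nabla v\rangle+x_k\big(\partial_k\partial_j u\,\partial_j v+\partial_k\partial_j v\,\partial_j u\big)$. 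Using $\Delta u=\Delta v=0$ the Laplacian terms drop out, the three scalar terms cancel since $1+1-2=0$ (this is where the ambient dimension $2$ enters, so that $\operatorname{div}x=2$), and the remaining second-order terms cancel pairwise after relabelling indices; hence $\operatorname{div}W=0$. It is precisely this computation that forces the coefficients $-2d$ and $d$ in \eqref{P}.

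Next, for any $0<d_1<d_2\le 1-\delta$, the functions $u$ and $v$ are harmonic — hence smooth — on the closed annulus $\overline{B_{d_2}}\setminus B_{d_1}$, so the divergence theorem gives
\begin{equation*}
\int_{\partial B_{d_2}}\langle W,\nu\rangle\,d\sigma-\int_{\partial B_{d_1}}\langle W,\nu\rangle\,d\sigma=\int_{B_{d_2}\setminus B_{d_1}}\operatorname{div}W\,dx=0,
\end{equation*}
so $d\mapsto\int_{\partial B_d}\langle W,\nu\rangle\,d\sigma$ is constant for $d\in(0,1-\delta]$. Finally, on $\partial B_d$ one has $\nu=x/d$, whence $\langle x,\nabla u\rangle=d\langle\nabla u,\nu\rangle$, $\langle x,\nu\rangle=d$, and therefore $\langle W,\nu\rangle=2d\,\langle\nabla u,\nu\rangle\langle\nabla v,\nu\rangle-d\,\langle\nabla u,\nabla v\rangle$, i.e.\ $\int_{\partial B_d}\langle W,\nu\rangle\,d\sigma=-P(u,v)$. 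Combining the last two facts shows that $P(u,v)$ does not depend on $d\in(0,1-\delta]$, which is the assertion.

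I expect the only real obstacle to be the routine bookkeeping in verifying $\operatorname{div}W=0$; everything afterwards is just the divergence theorem on an annulus together with the evaluation of $\langle W,\nu\rangle$ on a circle. As an alternative that lightens this bookkeeping, one may first note that by bilinearity and symmetry of $P$, together with the fact that $u+v$ is harmonic, polarization reduces the statement to the diagonal case $P(u,u)=d\int_{\partial B_d}\big(|\langle\nabla u,\tau\rangle|^2-|\langle\nabla u,\nu\rangle|^2\big)\,d\sigma$ (with $\tau$ the unit tangent to $\partial B_d$), whose independence of $d$ for harmonic $u$ is the standard Rellich identity in the plane.
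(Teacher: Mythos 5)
Your proof is correct: the divergence-free field $W$, the divergence theorem on an annulus avoiding the singularity, and the identification $\int_{\partial B_d}\langle W,\nu\rangle\,d\sigma=-P(u,v)$ (using $\nu=x/d$) all check out, including the dimension-two cancellation $1+1-2=0$. The paper itself gives no proof of this lemma — it simply quotes Lemma 2.4 of \cite{LPP-2022} — and your argument is exactly the standard Rellich--Pohozaev flux computation underlying that cited result, so it supplies in a self-contained way what the paper only references.
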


Next, we have the following identity about the quadratic form $P$ on the solution $u_\lambda$.
\begin{lemma}\label{lem2.3}
	Let $u_\lambda\in C^2(B_{1})$ be a solution of problem \eqref{quan} and $\delta>0$ is a fixed small constant such that $B_{\delta r_{\lambda}}\subset B_{1}$. Then
\begin{equation}\label{puu}
\begin{split}
P\big(u_\lambda,u_\lambda\big)&=
\delta r_{\lambda}\int_{\partial B_{\delta r_{\lambda}}}\frac{u^2_\lambda}{(1-|x|^{2})^{2}}   d\sigma
+2\int_{B_{\delta r_{\lambda}}} \frac{ u^2_\lambda(1+|x|^{2})} {(1-|x|^{2})^{3}}  dx
+2\lambda \delta r_{\lambda}\int_{\partial B_{\delta r_{\lambda}}} e^{u_\lambda} d\sigma \\&\quad-4\lambda \int_{B_{\delta r_{\lambda}}} e^{u_\lambda} dx.
\end{split}
\end{equation}
	
\end{lemma}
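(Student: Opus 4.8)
The plan is to obtain \eqref{puu} as the local Pohozaev identity on the ball $B_{\delta r_\lambda}$, which is admissible since $\overline{B_{\delta r_\lambda}}\subset B_1$ by hypothesis and $u_\lambda\in C^2(B_1)$. This is essentially the computation of Lemma \ref{Poho1} carried out with $r=\delta r_\lambda$, repackaged through the quadratic form $P$ of \eqref{P}; for definiteness I would still derive it from scratch. Multiplying \eqref{quan} by the Pohozaev multiplier $x\cdot\nabla u_\lambda$ and integrating over $B_{\delta r_\lambda}$ yields $A+B=C$ with
$$A=\int_{B_{\delta r_\lambda}}(-\Delta u_\lambda)(x\cdot\nabla u_\lambda)\,dx,\quad B=-\int_{B_{\delta r_\lambda}}\frac{u_\lambda}{(1-|x|^2)^2}(x\cdot\nabla u_\lambda)\,dx,\quad C=\lambda\int_{B_{\delta r_\lambda}}e^{u_\lambda}(x\cdot\nabla u_\lambda)\,dx.$$

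For $A$, I would use the Rellich identity $(\Delta u_\lambda)(x\cdot\nabla u_\lambda)=\operatorname{div}\!\big((x\cdot\nabla u_\lambda)\nabla u_\lambda-\tfrac12|\nabla u_\lambda|^2 x\big)$, which in dimension two is exact (the interior terms cancel). Applying the divergence theorem leaves only boundary integrals over $\partial B_{\delta r_\lambda}$; writing $d=\delta r_\lambda$ and noting $x=d\nu$ and $x\cdot\nabla u_\lambda=d\langle\nabla u_\lambda,\nu\rangle$ on $\partial B_d$, one gets exactly $A=\tfrac12 P(u_\lambda,u_\lambda)$. For $B$, I would use $u_\lambda(x\cdot\nabla u_\lambda)=\tfrac12 x\cdot\nabla(u_\lambda^2)$ together with $\dfrac{x}{(1-|x|^2)^2}=\tfrac12\nabla\dfrac{1}{1-|x|^2}$ to write $B=-\tfrac14\int_{B_{\delta r_\lambda}}\nabla\dfrac{1}{1-|x|^2}\cdot\nabla(u_\lambda^2)\,dx$, integrate by parts once, and insert $\Delta\dfrac{1}{1-|x|^2}=\dfrac{4(1+|x|^2)}{(1-|x|^2)^3}$; this produces the boundary term $-\tfrac{\delta r_\lambda}{2}\int_{\partial B_{\delta r_\lambda}}\dfrac{u_\lambda^2}{(1-|x|^2)^2}\,d\sigma$ together with the bulk term $\int_{B_{\delta r_\lambda}}\dfrac{u_\lambda^2(1+|x|^2)}{(1-|x|^2)^3}\,dx$. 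For $C$, writing $e^{u_\lambda}(x\cdot\nabla u_\lambda)=x\cdot\nabla(e^{u_\lambda})$ and integrating by parts (with $\operatorname{div}x=2$ in $\mathbb{R}^2$) gives $C=\lambda\,\delta r_\lambda\int_{\partial B_{\delta r_\lambda}}e^{u_\lambda}\,d\sigma-2\lambda\int_{B_{\delta r_\lambda}}e^{u_\lambda}\,dx$.

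Finally I would substitute into $\tfrac12 P(u_\lambda,u_\lambda)=A=C-B$ and multiply by $2$, which delivers \eqref{puu}. The argument is a pure divergence-theorem computation with no genuine obstacle; the only points that require care are bookkeeping the numerical coefficients and signs of the Hardy boundary and bulk terms, the (routine) verification that $\Delta\dfrac{1}{1-|x|^2}=\dfrac{4(1+|x|^2)}{(1-|x|^2)^3}$, and keeping track of the radius $d=\delta r_\lambda$ throughout. All boundary integrals are finite because $u_\lambda\in C^2(B_1)$ and $\overline{B_{\delta r_\lambda}}\subset B_1$; if one prefers, the whole statement can instead be read off directly from Lemma \ref{Poho1} with $r=\delta r_\lambda$ using the relation $P(u_\lambda,u_\lambda)=2\int_{B_{\delta r_\lambda}}(-\Delta u_\lambda)(x\cdot\nabla u_\lambda)\,dx$.
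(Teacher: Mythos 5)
Your route is exactly the paper's: multiply \eqref{quan} by the Pohozaev multiplier $x\cdot\nabla u_\lambda$, integrate over $B_{\delta r_\lambda}$, and convert $A$, $B$, $C$ by the divergence theorem, identifying the Rellich boundary terms with $\tfrac12 P(u_\lambda,u_\lambda)$; your individual computations (the exact divergence form of $(\Delta u)(x\cdot\nabla u)$ in dimension two, $\Delta\frac{1}{1-|x|^2}=\frac{4(1+|x|^2)}{(1-|x|^2)^3}$, and $\operatorname{div}x=2$) are all correct. The one point you must fix is the final assembly: with your own expressions for $B$ and $C$, the relation $\tfrac12 P(u_\lambda,u_\lambda)=C-B$ gives
\begin{equation*}
P(u_\lambda,u_\lambda)=\delta r_\lambda\int_{\partial B_{\delta r_\lambda}}\frac{u_\lambda^2}{(1-|x|^2)^2}\,d\sigma
\;-\;2\int_{B_{\delta r_\lambda}}\frac{u_\lambda^2(1+|x|^2)}{(1-|x|^2)^3}\,dx
\;+\;2\lambda\,\delta r_\lambda\int_{\partial B_{\delta r_\lambda}}e^{u_\lambda}\,d\sigma
\;-\;4\lambda\int_{B_{\delta r_\lambda}}e^{u_\lambda}\,dx,
\end{equation*}
i.e.\ the bulk Hardy term comes out with a minus sign, whereas \eqref{puu} prints it with a plus; so your assertion that the computation delivers \eqref{puu} verbatim hides a sign discrepancy. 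The discrepancy is not in your mathematics: combining Lemma \ref{Poho1} (with $r=\delta r_\lambda$) with the radial symmetry of $u_\lambda$, which gives $P(u_\lambda,u_\lambda)=-\int_{\partial B_{\delta r_\lambda}}|\nabla u_\lambda|^2\langle x,\nu\rangle\,d\sigma$, reproduces exactly the identity displayed above, so the sign of that term in \eqref{puu} appears to be a typo (and the intermediate identity displayed in the paper's own proof of Lemma \ref{lem2.3} is itself inconsistent with \eqref{puu}, since after doubling it yields $-2\lambda\int_{B_{\delta r_\lambda}}e^{u_\lambda}dx$ rather than $-4\lambda\int_{B_{\delta r_\lambda}}e^{u_\lambda}dx$; the missing factor there is $\operatorname{div}x=2$). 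So state the corrected identity, or flag the sign, rather than claim literal agreement with \eqref{puu}; also note that your shortcut of reading the lemma off Lemma \ref{Poho1} needs the radial symmetry of $u_\lambda$ to identify the boundary gradient term with $-P(u_\lambda,u_\lambda)$, while your direct Rellich computation does not.
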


\begin{proof}
Multiplying $\big\langle x, \nabla u_\lambda \big\rangle$ on both sides of equation \eqref{quan} and integrating on $B_{\delta r_{\lambda}}$, we have
\[
\begin{split}
&\frac{1}{2}\int_{\partial B_{\delta r_{\lambda}}} \big\langle x,\nu\big\rangle |\nabla u_\lambda|^2 \, d\sigma
		-\int_{\partial B_{\delta r_{\lambda}}}\frac{\partial u_\lambda}{\partial\nu} \big\langle x, \nabla u_\lambda \big\rangle  \,d\sigma
 \\
=&\frac{1}{2}\int_{\partial B_{\delta r_{\lambda}}}\frac{u^2_\lambda}{(1-|x|^{2})^{2}} \big\langle x,\nu\big\rangle  \,d\sigma +
		\int_{B_{\delta r_{\lambda}}} \frac{u^2_\lambda(1+|x|^{2})}{(1-|x|^{2})^{3}} dx
\\
&+\lambda\int_{\partial B_{\delta r_{\lambda}}} x\cdot \nu e^{u_\lambda} \,d\sigma
		-2\lambda \int_{ B_{\delta r_{\lambda}}} e^{u_\lambda}\,dx,
\end{split}
\]
	which together with \eqref{P} implies \eqref{puu}.
	
\end{proof}

\textbf{Data availability statement}\\
The authors confirm that there is no data used in our manuscript.

\end{document}